\newcommand\version{August 13, 2025}
\tikzset{
	> = stealth,
	every pin/.style = {pin edge = {}},
	flow/.style = {decoration = {markings, mark=at position #1 with {\arrow{>}}},
		postaction = {decorate}
	},
	flow/.default = 0.5,
	main/.style = {color=#1, line width=0.5pt, line cap=round, line join=round},
	main/.default = black,
	fontscale/.style={font=\relsize{#1}},
}
\newtheorem{theorem}{Theorem}[section]
\newtheorem{proposition}[theorem]{Proposition}
\newtheorem{lemma}[theorem]{Lemma}
\newtheorem{corollary}[theorem]{Corollary}
\theoremstyle{definition}
\theoremstyle{remark}
\newtheorem{remark}[theorem]{Remark}
\numberwithin{equation}{section}
\numberwithin{equation}{section}
\newcommand{\1}{\mathbbm{1}}
\renewcommand{\epsilon}{\varepsilon}
\newcommand{\N}{\mathbb{N}}
\renewcommand{\phi}{\varphi}
\newcommand{\R}{\mathbb{R}}
\newcommand{\Sph}{\mathbb{S}}
\let\oldtocsection=\tocsection
\let\oldtocsubsection=\tocsubsection
\let\oldtocsubsubsection=\tocsubsubsection
\renewcommand{\tocsection}[2]{\hspace{0em}\oldtocsection{#1}{#2}}
\renewcommand{\tocsubsection}[2]{\hspace{1em}\oldtocsubsection{#1}{#2}}
\renewcommand{\tocsubsubsection}[2]{\hspace{2em}\oldtocsubsubsection{#1}{#2}}
\newif\ifTodo
\begin{document}

\begin{titlepage}
    \huge \title[Sharp quantitative integral inequalities for harmonic extensions]{Sharp quantitative integral inequalities\\ for harmonic extensions}
		\vspace{7cm}
\end{titlepage}

\date{\version}
\thanks{\copyright\, 2025 by the authors. This paper may be reproduced, in its entirety, for non-commercial purposes.}

\author{Rupert L. Frank}
\address[Rupert L.~Frank]{
	Mathematisches Institut, Ludwig-Maximilians-Universit\"at M\"unchen, Theresienstr.~39, 80333 M\"unchen, Germany, and Munich Center for Quantum Science and Technology, Schel\-ling\-str.~4, 80799 M\"unchen, Germany, and Mathematics 253-37, Caltech, Pasa\-de\-na, CA 91125, USA}
\email{r.frank@lmu.de}

\author{Jonas W. Peteranderl}
\address[Jonas W.~ Peteranderl]{Mathematisches Institut, Ludwig-Maximilians-Universit\"at M\"unchen, The\-resienstr.~39, 80333 M\"unchen, Germany}
\email{peterand@math.lmu.de}

\author{Larry Read}
\address[Larry Read]{Mathematisches Institut, Ludwig-Maximilians-Universit\"at M\"unchen, Theresienstr.~39, 80333 M\"unchen, Germany}
\email{read@math.lmu.de}
	
\begin{abstract}
    We prove a quantitative version of a sharp integral inequality by Hang, Wang, and Yan for both the Poisson operator and its adjoint. Our result has the strongest possible norm and the optimal stability exponent. This stability exponent is not necessarily equal to 2, displaying the same phenomenon that Figalli and Zhang observed for the $p$-Sobolev inequality.
\end{abstract}

\maketitle
\setcounter{page}{1}

%%%%%%%%%%%%%%%%%%%%%%%%%%%%%%%%%%%%%%%
%%%%%%%%%%%%%%%%%%%%%%%%%%%%%%%%%%%%%%%

\section{Introduction and main results}

The objective of this paper is twofold. Our first goal is to turn certain sharp functional inequalities for the Poisson operator and its dual into a quantitative form by adding a term that involves the distance to the set of optimizers. Our second goal is to shed some light on a phenomenon discovered by Figalli and Zhang \cite{Figalli2022} in the context of the $p$-Sobolev inequality in quantitative form. Namely, they showed that the stability exponent (that is, the exponent with which the distance to the set of optimizers enters the inequality) changes depending on whether $p$ is smaller or larger than $2$. We demonstrate that the same change in behavior occurs in our setting of the Poisson operator, thus supporting the idea that the phenomenon discovered by Figalli and Zhang has some universal features. While there have been previous instances where the Figalli--Zhang stability exponent was found for $p>2$ \cite{guerra2025,Frank2024B}, our work and the concurrent work \cite{wang2025} seem to be the first ones where it is found in the (more intricate) regime of $p<2$. In contrast to all these previous instances, including \cite{wang2025}, our setting involves a non-local (and therefore not a differential) operator.

\subsection{The HWY-inequality and its stability}
Let $d\geq 3$ be the dimension. We study the harmonic extension operators from the Euclidean hyperplane $\R^{d-1}$ to the upper half-space $\R^d_+\coloneqq\R^{d-1}\times\R_+=\R^{d-1}\times(0,\infty)$.

\subsubsection*{Harmonic extension to the upper half-space}
The Poisson kernel of the upper half-space $\R^d_+$ is given by
\begin{equation*}
    P(x,\xi)\coloneqq \frac
    2 {|\mathbb S^{d-1}|} \frac{x_d}{\left(|x'-\xi|^2+x_d^2\right)^{\frac{d}{2}}}\,, \qquad (x,\xi)\in \R^d_+\times \R^{d-1}\,,
\end{equation*} where $|\mathbb S^{d-1}|$ denotes the measure of the $(d-1)$-dimensional unit sphere $\mathbb S^{d-1}$. The kernel $P$ acts on suitably integrable functions $f:\R^{d-1}\to \R$ via
\begin{equation*}
    (Pf)(x)\coloneqq \int_{\R^{d-1}}P(x,\xi)f(\xi) \, \mathrm d\xi\,,\qquad  x\in \R^{d}_+\,.
\end{equation*}  
The sharp Hang--Wang--Yan (HWY) inequality \cite{HANG2009, Hang2008} has the form
\begin{equation}\label{eq:HWY}\tag{HWY}
   C_d \|Pf\|^{\frac{2(d-1)}{d-2}}_{L^{\frac{2d}{d-2}}(\R^d_+)}\leq \|f\|^{\frac{2(d-1)}{d-2}}_{L^{\frac{2(d-1)}{d-2}}(\R^{d-1})}
\end{equation} with $C_d\coloneqq (d^{d-1}|\mathbb S^{d-1}|)^{1/d}
$. Moreover, equality holds if and only if $f=f_{a,b,\xi_0}$ for some $a\in\R$, $b\in \R_+$, and $\xi_0\in \R^{d-1}$, where
\begin{equation}\label{eq:optimizers}
    f_{a,b,\xi_0}(\xi)\coloneqq ab^{\frac{d-2}{2}}f_0(b(\xi-\xi_0))\qquad \text{with} \qquad f_0(\xi)\coloneqq \left(1+|\xi|^2\right)^{-\frac{d-2}{2}},\quad \xi\in \R^{d-1}\,.
\end{equation}

In this article we bring the HWY-inequality into quantitative form by adding a term that measures the distance to the set of optimizers \eqref{eq:optimizers}. More specifically, our first main result states the following.

\begin{theorem}\label{thm:HWYstability} Let $d\geq 3$. There is a $c_d>0$ such that for all $f\in L^{\frac{2(d-1)}{d-2}}(\R^{d-1})$ we have
    \begin{align*}  
       \|f\|^{\frac{2(d-1)}{d-2}}_{L^{\frac{2(d-1)}{d-2}}(\R^{d-1})}&-C_d\|Pf\|^\frac{2(d-1)}{d-2}_{L^{\frac{2d}{d-2}}(\R^d_+)}\\&\geq c_d \inf_{a,b,\xi_0}
\left(\int_{\R^{d-1}}|f-f_{a,b,\xi_0}|^{\frac{2(d-1)}{d-2}}\,\mathrm d\xi+\int_{\R^{d-1}}(f-f_{a,b,\xi_0})^2|f_{a,b,\xi_0}|^{\frac2{d-2}}\,\mathrm d\xi\right),
    \end{align*}
    where the infimum is taken over all
    $a\in\R$, $b\in \R_+$, and $\xi_0\in \R^{d-1}$.
\end{theorem}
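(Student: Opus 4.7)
The plan is to combine a local spectral analysis of the deficit near an optimizer with a global concentration-compactness reduction. Set $q := \tfrac{2(d-1)}{d-2}$, $p := \tfrac{2d}{d-2}$, and $\Phi(f) := \|f\|_{L^q(\R^{d-1})}^q - C_d \|Pf\|_{L^p(\R^d_+)}^q$ for the deficit. I would first argue by contradiction: if the claimed inequality fails, there is a normalized sequence $f_n$ with $\Phi(f_n) \to 0$ while the normalized right-hand side stays bounded below. Using the $(d+1)$-parameter symmetry group of \eqref{eq:HWY} (scalar multiplication $a$, dilation $b$, and translation $\xi_0$) together with a Struwe--G\'erard-type profile decomposition for the embedding $P \colon L^q(\R^{d-1}) \to L^p(\R^d_+)$, the sequence $f_n$ can be transformed so that $f_n = f_0 + g_n$ with $g_n \to 0$ in $L^q$ and $g_n$ orthogonal to the tangent space of the optimizer manifold at $f_0$.

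Next I would carry out a joint second-order Taylor expansion of the two terms of $\Phi(f_0+g)$. The zeroth- and first-order contributions cancel by the HWY equality at $f_0$ and by the Euler--Lagrange equation $f_0^{q-1} = \gamma_d\, P^\ast (Pf_0)^{p-1}$ (for an appropriate $\gamma_d > 0$), leaving a quadratic form $Q(g,g)$ naturally associated with the self-adjoint linearization $\mathcal L := P^\ast M_{(Pf_0)^{p-2}} P$ on the weighted space $L^2(f_0^{q-2}\,\mathrm d\xi)$. Conformal covariance diagonalizes $\mathcal L$ under stereographic projection to $\Sph^{d-1}$ (spherical harmonics are its eigenfunctions), and the kernel of $Q$ is precisely the $(d+1)$-dimensional tangent space to the optimizer manifold at $f_0$. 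On its orthogonal complement I therefore expect the spectral gap
\[
Q(g, g) \geq c \int_{\R^{d-1}} f_0^{q-2} g^2 \, \mathrm d\xi,
\]
which already accounts for the weighted $L^2$ term on the right-hand side of the theorem.

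To also capture the $L^q$-distance term---the Figalli--Zhang phenomenon in the $q > 2$ regime---I would upgrade the Taylor expansion on the $L^q$ side with the sharp pointwise refinement
\[
|f_0 + g|^q - f_0^q - q f_0^{q-1} g - \tfrac{q(q-1)}{2} f_0^{q-2} g^2 \geq c_q |g|^q - \epsilon\, f_0^{q-2} g^2,
\]
valid for any $\epsilon > 0$ and some $c_q = c_q(\epsilon) > 0$. Choosing $\epsilon$ a small fraction of the spectral gap constant absorbs the negative contribution and produces the extra $\int |g|^q\, \mathrm d\xi$ term in the lower bound. The higher-order remainders from the two expansions---including a non-local cubic remainder involving $Pg$---are then absorbed by taking $\|g_n\|_{L^q}$ sufficiently small, bounding $\|Pg_n\|_{L^p}$ through the HWY inequality itself. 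A standard compactness argument closes the contradiction and extends the stability inequality to all $f$.

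The main obstacle will be the non-local nature of $P$: no pointwise Figalli--Zhang-type refinement is available on the $\R^d_+$ side, so the entire $L^q$ gain must be produced on $\R^{d-1}$ and survive the subtraction of $C_d \|P(f_0+g)\|_{L^p}^q$ without cancellation from the second-order contribution in $Pg$. Handling this delicate interplay---combined with the cross-regime bookkeeping between $|g| \lesssim f_0$ (dominated by the spectral gap) and $|g| \gtrsim f_0$ (dominated by the $L^q$ gain), and the verification that the linearization of the non-local term has the claimed spectral structure---is, in my view, the technical core of the argument and what ultimately produces the Figalli--Zhang stability exponent $q > 2$ in this non-local setting.
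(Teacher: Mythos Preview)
Your overall strategy---concentration compactness to localize, orthogonality to the tangent space, second-order expansion plus spectral gap plus a Figalli--Zhang pointwise refinement---is exactly the paper's, carried out there on $\Sph^{d-1}$ via stereographic projection. But the specific pointwise refinement you write down is \emph{false}: normalizing $f_0=1$ and setting $g=a=-1$, your left side equals
\[
-1+q-\tfrac{q(q-1)}{2}=-\tfrac{(q-1)(q-2)}{2}<0
\]
for every $q>2$ (it is $-1$ when $d=4$), while your right side is $c_q-\epsilon$, forcing $\epsilon\geq 1+c_q>1$ and destroying the plan to take $\epsilon$ a small fraction of the spectral gap constant. This is a structural obstruction, not a typo: the honest second-order coefficient $\tfrac{q(q-1)}{2}$ is simply too large near $a=-1$ to leave room for a nonnegative $c_q|a|^q$ remainder.

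The correct Figalli--Zhang inequality, which the paper imports, replaces $\tfrac{q(q-1)}{2}a^2$ by $\tfrac{q}{2}\bigl(a^2+(q-2)\zeta(a)(1-|1+a|)^2\bigr)$ with $\zeta(a)=|1+a|^{q-1}$ on $[-2,0]$ and $\zeta(a)=1$ elsewhere; this matches $\tfrac{q(q-1)}{2}a^2$ for small $a$ but degenerates near $a=-1$, and with it a lower bound carrying a $c_\kappa|a|^q$ gain \emph{does} hold. The price is that your linear spectral gap no longer closes the argument: one needs the \emph{non-linear} gap
\[
\|r\|_2^2+(q-2)\int_{\Sph^{d-1}}\zeta(r)(1-|1+r|)^2\,\mathrm d\mu\;\geq\;(q-1)\,\tfrac{d+2+\lambda}{d}\,\|Qr\|_2^2
\]
for $r\perp\mathcal H^l$ small in $L^q$ (here $Q$ is the Poisson extension to the ball). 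The paper proves this by a compactness argument that reduces, in the limit, to the linear gap; this step, rather than the linear Hessian analysis you sketch, is the real technical core you correctly sense but misidentify.
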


This stability result enjoys a number of interesting optimality and invariance properties, which we will discuss in detail below. Already at this point we want to emphasize that the \emph{stability exponents} $\frac{2(d-1)}{d-2}$ for the $L^\frac{2(d-1)}{d-2}$-norm and $2$ for the weighted $L^2$-norm are \emph{optimal}.

\subsubsection*{Dual operator on the upper half-space}
The HWY-inequality states that $C_d^{-\frac{d-2}{2(d-1)}}$ is the norm of the operator $P:L^\frac{2(d-1)}{d-2}(\R^{d-1})\to L^\frac{2d}{d-2}(\R^d_+)$. Since the norm of an operator coincides with the norm of its adjoint, which in the case of $P$ is given by
\begin{equation*}
    (Tg)(\xi)\coloneqq \int_{\R^d_+} P(x,\xi) g(x) \,\mathrm  dx\,, \qquad \xi\in \R^{d-1} \,,
\end{equation*}
for suitably integrable functions $g:\R^d_+\to \R$, we arrive at the sharp dual HWY-inequality
\begin{equation}\tag{HWYd}\label{eq:HWYdual}
   C_d' \, \|Tg\|^{\frac{2d}{d+2}}_{L^{\frac{2(d-1)}{d}}(\R^{d-1})}\leq \|g\|^{\frac{2d}{d+2}}_{L^{\frac{2d}{d+2}}(\R_+^{d})}\, 
\end{equation}
with $C_d'\coloneqq(C_d)^\frac{d(d-2)}{(d+2)(d-1)}$. From the characterization of the optimizers in \eqref{eq:HWY}, we find that equality holds in \eqref{eq:HWYdual} if and only if $g=g_{a,b,\xi_0}$ for some $a\in \R,\, b\in \R_+,\, \xi_0\in \R^{d-1}$, where
\begin{equation*}
    g_{a,b,\xi_0}(x)\coloneqq ab^{\frac{d+2}{2}}g_0(b(x-\xi_0)) \qquad \text{with}\qquad g_0(x)\coloneqq |x+e_d|^{-d-2},\quad x\in \R^{d}_+\,,
\end{equation*}
and with the unit vector $e_d=(0,\ldots,1)\in\R^d$.

We bring the dual HWY-inequality into quantitative form as well, by including a term that measures the distance to the optimizers. Our second main result states the following. 

\begin{theorem}\label{thm:HWYstabilitydual} Let $d\geq 3$. There is a $c_d>0$ such that for all $g\in L^{\frac{2d}{d+2}}(\R^{d}_+)$ we have
\begin{equation*}
        \|g\|^\frac{2d}{d+2}_{L^{\frac{2d}{d+2}}(\R_+^{d})}-C_d' \,\|Tg\|^\frac{2d}{d+2}_{L^{\frac{2(d-1)}{d}}(\R^{d-1})}\geq
          c_d \inf_{a,b,\xi_0}\left(\int_{\R^d_+}|g-g_{a,b,\xi_0}|^{\frac{2d}{d+2}}
          \,\mathrm dx\right)^{\frac{d+2}d},
    \end{equation*}
    where the infimum is taken over all $a\in\R$, $b\in \R_+$, and $\xi_0\in \R^{d-1}$. 
\end{theorem}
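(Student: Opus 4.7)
The plan is to deduce Theorem~\ref{thm:HWYstabilitydual} from Theorem~\ref{thm:HWYstability} by an adjoint duality argument. Writing $q\coloneqq \frac{2(d-1)}{d-2}$, $r\coloneqq \frac{2d}{d-2}$, $q'\coloneqq \frac{2(d-1)}{d}$, $r'\coloneqq \frac{2d}{d+2}$, we may assume by scaling that $\|g\|_{L^{r'}(\R^d_+)}=1$. Let $u\coloneqq Tg$ and define
\[
    f \coloneqq \|u\|_{L^{q'}}^{-q'/q}\, |u|^{q'-2}u\,,
\]
normalized so that $\|f\|_{L^q(\R^{d-1})}=1$ and $\int f\,u\,\dd\xi = \|u\|_{L^{q'}}$; by Fubini this last quantity equals $\int_{\R^d_+}(Pf)g\,\dd x$. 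Setting
\[
    \alpha \coloneqq \|Pf\|_{L^r}^{-1}\int_{\R^d_+}(Pf)g\,\dd x \in [0,1]\,, \qquad \beta \coloneqq C_d^{1/q}\|Pf\|_{L^r} \in [0,1]\,,
\]
and using $C_d' = C_d^{r'/q}$, one obtains the deficit decomposition
\[
    \delta_D(g) \coloneqq 1 - C_d'\|Tg\|_{L^{q'}}^{r'} \;=\; (1-\alpha^{r'}) + \alpha^{r'}(1-\beta^{r'}) \;\eqqcolon\; \delta_H + \alpha^{r'}\delta_P\,,
\]
where $\delta_H$ is the deficit in H\"older's inequality $\int (Pf)g \leq \|Pf\|_{L^r}$ and $\delta_P$ is comparable, in the regime $\beta\approx 1$, to the primal HWY-deficit $1-\beta^q$ of $f$.

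A standard concentration-compactness reduction, exploiting the $(a,b,\xi_0)$-symmetries of \eqref{eq:HWYdual}, restricts attention to the regime $\delta_D(g)\ll 1$, in which $\alpha,\beta$ are close to $1$. Theorem~\ref{thm:HWYstability} applied to $f$ then produces parameters $(a,b,\xi_0)$ with
\[
    \|f-f_{a,b,\xi_0}\|_{L^q}^{q} + \int_{\R^{d-1}} (f-f_{a,b,\xi_0})^2\,f_{a,b,\xi_0}^{2/(d-2)}\,\dd\xi \;\lesssim\; \delta_P \;\lesssim\; \delta_D(g)\,,
\]
and, since $r'<2$, the quantitative H\"older inequality coming from the uniform smoothness of $L^{r'}$ yields
\[
    \|g - \lambda |Pf|^{r-2}(Pf)\|_{L^{r'}(\R^d_+)}^{\,2} \;\lesssim\; \delta_H \;\lesssim\; \delta_D(g)
\]
for some $\lambda>0$. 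The stability exponent $2$ here is forced by $r'<2$; this is precisely the Figalli--Zhang-type jump behind the power $(d+2)/d$ on the right-hand side of Theorem~\ref{thm:HWYstabilitydual}.

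The remaining step is to replace $|Pf|^{r-2}(Pf)$ by $|Pf_{a,b,\xi_0}|^{r-2}(Pf_{a,b,\xi_0})$, which is a constant multiple of $g_{a,b,\xi_0}$; a triangle inequality then finishes the proof (after absorbing scalar factors into the scale parameter $a$). The first-order Taylor expansion of $t\mapsto |t|^{r-2}t$ around $Pf_{a,b,\xi_0}$ gives the leading term $(r-1)(Pf_{a,b,\xi_0})^{r-2}P(f-f_{a,b,\xi_0})$, and a H\"older split with exponents $(2/r',\,2/(2-r'))$ gives
\[
    \int_{\R^d_+} (Pf_{a,b,\xi_0})^{(r-2)r'}|P(f-f_{a,b,\xi_0})|^{r'}\,\dd x \;\leq\; W_P^{r'/2}\,\|Pf_{a,b,\xi_0}\|_{L^r}^{r(2-r')/2}\,,
\]
where $W_P \coloneqq \int_{\R^d_+} (Pf_{a,b,\xi_0})^{r-2}\,|P(f-f_{a,b,\xi_0})|^{2}\,\dd x$. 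The main obstacle is establishing the weighted Poisson estimate $W_P \lesssim \int (f-f_{a,b,\xi_0})^2 f_{a,b,\xi_0}^{2/(d-2)}\,\dd\xi$, which represents the linearization of \eqref{eq:HWY} around $f_{a,b,\xi_0}$ and is the structural reason why Theorem~\ref{thm:HWYstability} features the weighted $L^2$ term; note that the naive bound through $\|f-f_{a,b,\xi_0}\|_{L^q}$ and \eqref{eq:HWY} would only yield $\delta_D(g)^{2/q}$, which is insufficient since $q>2$. Once the weighted Poisson estimate is in place the leading contribution has squared $L^{r'}$-norm $\lesssim \delta_D(g)$, the higher-order Taylor remainder is handled by the $L^q$-piece together with \eqref{eq:HWY} applied to $f-f_{a,b,\xi_0}$, and the proof concludes. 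Verifying this weighted estimate together with the correct treatment of the kernel modes corresponding to the $(a,b,\xi_0)$ invariance is the technical heart of the argument.
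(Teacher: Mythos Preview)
Your route is genuinely different from the paper's. The paper does \emph{not} derive Theorem~\ref{thm:HWYstabilitydual} from Theorem~\ref{thm:HWYstability}; on the contrary, remark~(e) in the introduction explicitly doubts that duality can recover the optimal exponent, and Section~\ref{sec:3} proves the dual result from scratch on the ball. The ingredients there are a qualitative global-to-local step (Proposition~\ref{prop:qualdual}, itself obtained by soft duality from Proposition~\ref{prop:qual}) and an independent local bound (Proposition~\ref{prop:localstabilitydual}) resting on the elementary inequalities for exponents $<2$ (Lemma~\ref{lem:elemineq2}), an Orlicz-type compactness lemma (Lemma~\ref{lem:cpctresult}), and a non-linear spectral gap for $S$ (Proposition~\ref{prop:anospecV}).

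Your sketch, by contrast, feeds the primal stability of $f=|Tg|^{q'-2}Tg$ back into an estimate for $g$ through $2$-uniform convexity of $L^{r'}$ and a pointwise linearization of $t\mapsto|t|^{r-2}t$. The step you flag as the ``technical heart'' is actually the easy part: transferred to the ball via the conformal dictionary in Appendix~\ref{sec:confinv}, the weighted Poisson bound $\int (Pf_\ast)^{r-2}|P\phi|^2 \lesssim \int f_\ast^{\,q-2}\phi^2$ becomes the unweighted estimate $\|Q\tilde\phi\|_{L^2(B^d)}\le \|\tilde\phi\|_{L^2(\Sph^{d-1})}$, which is immediate from the spherical-harmonic computation in Lemma~\ref{lem:spec} (eigenvalues $d/(2\ell+d)\le 1$). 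No separate handling of ``kernel modes'' is needed, since you only use Theorem~\ref{thm:HWYstability} as a black box and any near-optimizer $f_{a,b,\xi_0}$ produces a legitimate competitor $g_{a',b,\xi_0}$ via $(Pf_\ast)^{r-1}$. The remaining pieces you leave implicit---the quantitative H\"older step (via $\delta_{L^{r'}}(\epsilon)\gtrsim\epsilon^2$ for $r'<2$), the Taylor remainder $|P(f-f_\ast)|^{r-1}$ controlled through \eqref{eq:HWY} with exponent $2(r-1)/q=(d+2)/(d-1)>1$, and the trivial large-deficit regime---all check out.

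In short, your outline appears to work and would bypass the whole machinery of Section~\ref{sec:3}, at the cost of relying on the two-term remainder of Theorem~\ref{thm:HWYstability}; this is precisely the structural point of that weighted $L^2$ term. What you have written, however, is a programme rather than a proof: every step is stated, several are not verified, and the writeup would need the H\"older/uniform-convexity bound, the conformal identification of the weighted norms, and the Taylor bookkeeping spelled out before it could stand on its own.
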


As in Theorem \ref{thm:HWYstabilitydual}, the \emph{stability exponent} $2$ for the $L^\frac{2d}{d+2}$-norm is \emph{optimal}.

\subsubsection*{Discussion of the main results}
Let us make the following remarks.
\begin{enumerate}
    \item[(a)] Theorems \ref{thm:HWYstability} and \ref{thm:HWYstabilitydual} are stability results for the primal and dual HWY-inequalities, respectively, with right sides that vanish if and only if the left sides vanish. In the setting of the dual inequality in Theorem \ref{thm:HWYstabilitydual}, the right side vanishes quadratically, as one would naively expect. In contrast, in Theorem \ref{thm:HWYstability}, the order of vanishing depends on the norm that is used. Vanishing in the stronger norm, namely the $L^\frac{2(d-1)}{d-2}$-norm, comes with a weaker power, namely $\frac{2(d-1)}{d-2}$, while vanishing in the weaker norm, namely the weighted $L^2$-norm, comes with the stronger power $2$. In Section \ref{sec:4} we will prove that all these exponents are \emph{optimal} for the respective notion of convergence. 

    \item[(b)] As discussed in (a), the stability exponent (or vanishing order) is $\max\{2,r\}$, where $r=\frac{2(d-1)}{d-2}$ and $r=\frac{2d}{d+2}$ in the primal and dual setting, respectively. (We ignore for the moment the weighted $L^2$-norm in Theorem \ref{thm:HWYstability} and return to it in (c) below.) Note that the $L^r$-norm is the `strong' norm in the respective inequalities. The stability exponent $\max\{2,r\}$ was first found by Figalli and Zhang \cite{Figalli2022} in the setting of the $p$-Sobolev inequality. There $r=p$ and again the $L^r$-norm is the `strong' norm in their inequality. We find it remarkable that this Figalli--Zhang phenomenon persists in our non-local framework. So far, to our knowledge, other instances where this phenomenon was observed were restricted to the realm of (local) differential operators; see \cite{guerra2025,Frank2024B,wang2025}, for instance.

    \item[(c)] As mentioned in (a), in the case $r>2$, our remainder term involves two different norms, and each one comes with its own stability exponent. This was first noted in \cite{Frank2024B} in the context of the $p$-Sobolev inequality and a certain inequality in conformal geometry; see also \cite{ignat2025}, where the importance of the weighted $L^2$-norm in applications is highlighted. Again, the present paper hints towards a certain universality of such two-term remainders.

    \item[(d)] The HWY-inequality and its dual are invariant under M\"obius transformations; see Appendix \ref{sec:confinv}. The distances to the set of optimizers that we use share this invariance.

    \item[(e)] The sharp inequalities \eqref{eq:HWY} and \eqref{eq:HWYdual} are equivalent to each other by duality. We doubt that this is the case for their quantitative forms given in Theorems \ref{thm:HWYstability} and \ref{thm:HWYstabilitydual}. In this respect, we mention a duality theory for quantitative inequalities developed by Carlen \cite{Carlen2017}, but, as far as we can see, this abstract theory does not allow one to recover the optimal stability exponents. This purported non-equivalence of Theorems \ref{thm:HWYstability} and \ref{thm:HWYstabilitydual} is reflected in the fact that the proof of the latter is somewhat more involved than that of the former. Intuitively, this can be understood from the need in Theorem~\ref{thm:HWYstabilitydual} to expand $L^r$-norms with $r<2$, instead of $r\geq 2$, to second-order.
\end{enumerate}

%%%%%%%%%%%%%%%%%%%%%%%%%%%%%%

\subsection{Passing to the ball}

The HWY-inequality is not only invariant under translations, dilations, and rotations, but under the full group of M\"obius transformations of $\R^{d-1}\cup\{\infty\}$. By duality, the corresponding invariance is true for its dual. This is discussed further in Appendix~\ref{sec:confinv}. Via stereographic projection, the (extended) boundary $\R^{d-1}\cup\{\infty\}$ is conformally equivalent to the sphere $\Sph^{d-1}$, and the half-space $\R^d_+$ to $B^d=\{x\in\R^d:\ |x|<1\}$, the unit ball. As a consequence, \eqref{eq:HWY} and \eqref{eq:HWYdual} have equivalent versions on $\Sph^{d-1}$ and $B^d$. We found it convenient to prove Theorems \ref{thm:HWYstability} and \ref{thm:HWYstabilitydual} in this equivalent setting.

\subsubsection*{Harmonic extension to the ball}
 The Poisson kernel of the unit ball $B^d$ is given by
\begin{equation}\label{eq:Poikersph}
    Q(y,\omega)\coloneqq \frac{1-|y|^2}{|y-\omega|^d} 
\,, \qquad (y,\omega)\in B^d\times \mathbb S^{d-1}\,.
\end{equation} 
The kernel $Q$ acts on suitably integrable functions $u:\mathbb S^{d-1}\to \R$ via
\begin{equation}\label{eq:Poifctball}
    (Qu)(y)\coloneqq \int_{\mathbb S^{d-1}}Q(y,\omega)u(\omega) \, \mathrm d\mu(\omega)\,,\qquad  y\in B^d\,.
\end{equation} 
Here $\mathrm d\mu$ is the uniform probability measure on $\mathbb S^{d-1}$. Note that the Poisson kernel \eqref{eq:Poikersph} would have an additional factor $|\mathbb S^{d-1}|^{-1}$ if $\mathrm d\mu$ was not normalized; compare \cite[p.~145]{Stein1990}.

If we endow the integrals on $B^d$ with a uniform probability measure, denoted by $\mathrm d\nu\coloneqq d r^{d-1}\mathrm dr\mathrm d\mu$, the HWY-inequality takes the form
\begin{equation}\label{eq:HWYsphere}\tag{HWY$\circ$}
   \|Qu\|_{L^{\frac{2d}{d-2}}(B^d)}\leq \|u\|_{L^{\frac{2(d-1)}{d-2}}(\mathbb S^{d-1})}
\end{equation}
with equality if and only if $u$ is constant up to symmetries of the inequality. These, given by the M\"obius transformations of $\Sph^{d-1}$, act on functions $u:\mathbb S^{d-1}\to \R$ via 
\begin{equation*}
    (u)_\Psi\coloneqq J_\Psi^{\frac{d-2}{2(d-1)}}u\circ\Psi\,,
\end{equation*}
where $\Psi\colon \Sph^{d-1} \to \Sph^{d-1}$ is a M\"obius transformation of $\Sph^{d-1}$ and $J_\Psi$ is the corresponding Jacobian. The conformal invariance of \eqref{eq:HWYsphere} means that, for any $u\in L^{\frac{2(d-1)}{d-2}}(\mathbb S^{d-1})$,
$$
\|(u)_\Psi \|_{L^{\frac{2(d-1)}{d-2}}(\mathbb S^{d-1})} =  \|u\|_{L^{\frac{2(d-1)}{d-2}}(\mathbb S^{d-1})}
\qquad\text{and}\qquad
\|Q(u)_\Psi\|_{L^{\frac{2d}{d-2}}(B^d)}=
\|Qu\|_{L^{\frac{2d}{d-2}}(B^d)} \,.
$$
We discuss this in Appendix \ref{sec:confinv}.

Now we can rephrase Theorem \ref{thm:HWYstability} on the sphere.

\begin{theorem}\label{thm:HWYstabilitysphere} Let $d\geq 3$. There is a $c_d>0$ such that for all $0\neq u\in L^{\frac{2(d-1)}{d-2}}(\mathbb S^{d-1})$ we have
    \begin{equation}\label{eq:stabQ}
    1 -\frac{\|Qu\|^{\frac{2(d-1)}{d-2}}_{L^{\frac{2d}{d-2}}(B^d)}}{ \|u\|^{\frac{2(d-1)}{d-2}}_{L^{\frac{2(d-1)}{d-2}}(\mathbb S^{d-1})}}\geq c_d \, \inf_{\lambda,\Psi}\left(\|1-\lambda (u)_{\Psi}\|^{\frac{2(d-1)}{d-2}}_{L^{\frac{2(d-1)}{d-2}}(\mathbb S^{d-1})}+\|1-\lambda (u)_{\Psi}\|^{2}_{L^{2}(\mathbb S^{d-1})}\right),
    \end{equation}
    where the infimum is taken over all $\lambda\in\R$ and over all M\"obius transformations $\Psi$ of $\Sph^{d-1}$.
\end{theorem}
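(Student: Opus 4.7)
The plan is to follow the by-now-classical scheme of local spectral analysis combined with a compactness-and-contradiction argument, adapted to the non-local setting. Both sides of \eqref{eq:stabQ} are invariant under the replacement $u \mapsto \lambda (u)_\Psi$ for $\lambda\in\R$ and any Möbius $\Psi$, so I would first normalize $u$. A Hersch-type balancing trick applied to the probability density $|u|^{r}/\|u\|_r^r$ on $\Sph^{d-1}$ (with $r=\tfrac{2(d-1)}{d-2}$) produces a Möbius transformation such that, after replacing $u$ by $(u)_\Psi$, the centering condition $\int_{\Sph^{d-1}}\omega\,|u|^{r-2}u\,d\mu = 0$ holds. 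Optimizing in the scalar then yields $v\coloneqq\lambda u-1$ with $\int v\,d\mu=0$. These two conditions say that $v$ is orthogonal (in a weighted sense) to the tangent directions of the orbit of constants through $u$, eliminating all zero-modes of the second variation.

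The local step is a spectral computation combined with a nonlinear refinement exploiting $r>2$. The Poisson extension $Q$ diagonalizes on spherical harmonics: if $Y_k$ has degree $k$ then $(QY_k)(y)=|y|^k Y_k(y/|y|)$, and $\int_{B^d}|QY_k|^2\,d\nu=\tfrac{d}{2k+d}\|Y_k\|_{L^2(\Sph^{d-1})}^2$. Expanding $\|1+v\|_{L^r}^r$ and $\|Q(1+v)\|_{L^{r^*}}^r$ (with $r^*=\tfrac{2d}{d-2}$) to second order in $v$ and subtracting, the coefficient in front of the $k$-th spherical-harmonic projection is proportional to $(r-1)-(r^*-1)\cdot\tfrac{d}{2k+d}$, which vanishes at $k=1$ (as required by conformal invariance) and is uniformly positive for $k\geq 2$. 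This yields the bound $1-\|Qu\|_{r^*}^r/\|u\|_r^r \gtrsim \|v\|_{L^2(\Sph^{d-1})}^2$ in the local regime. To upgrade to the sharper two-norm form, I would replace the Taylor expansion of $\|1+v\|_r^r$ by an $r>2$ pointwise refinement of Figalli--Zhang type (essentially $|1+t|^r - 1 - rt \geq \tfrac{r(r-1)}{2}t^2 + c_r|t|^r$ for $|t|\geq 1$, together with the usual quadratic bound otherwise), which contributes an extra $\|v\|_{L^r}^r$ term after integration. For the matching upper bound on $\|1+Qv\|_{L^{r^*}}^r$, I would Taylor-expand $|1+s|^{r^*}$ with an integrable remainder dominated by $|s|^{r^*}$, and use \eqref{eq:HWYsphere} in the form $\|Qv\|_{L^{r^*}} \leq \|v\|_{L^r}$ to absorb this contribution when $\|v\|_{L^r}$ is small.

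Passing from the local regime to all of $L^r(\Sph^{d-1})$ is done by contradiction: if \eqref{eq:stabQ} failed, there would be a sequence $(u_n)$ with $\|u_n\|_{L^r}=1$ whose HWY-deficit vanishes but whose distance to the orbit of constants stays bounded below. After Hersch-balancing as above, relative $L^r$-compactness up to subsequence yields a weak limit that must be an extremizer of \eqref{eq:HWYsphere}, hence a constant; this places us in the local regime and gives a contradiction. The principal technical obstacle I anticipate is the matching of the second-order expansions on the two sides: the quadratic coefficients must cancel exactly in the modes killed by the spectral gap, while the higher-order remainders must be strictly subordinate to both the $L^2$-gap and the $L^r$-gain so that the two-term right side survives subtraction. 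A secondary delicate point is compactness, where the non-compactness of the Möbius group forces essential use of the Hersch balancing to rule out boundary concentration of $|u_n|^r$.
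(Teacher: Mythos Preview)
Your overall architecture (spectral gap on spherical harmonics, elementary pointwise inequalities, compactness-and-contradiction) matches the paper's, but there is a genuine gap in the pointwise step. The inequality you propose, $|1+t|^r-1-rt\geq \tfrac{r(r-1)}{2}t^2+c_r|t|^r$ for $|t|\geq 1$, is false: at $t=-1$ the left side equals $r-1$ while the right side is $\tfrac{r(r-1)}{2}+c_r>r-1$ since $r>2$. More structurally, \emph{no} inequality of the form $|1+t|^r-1-rt\geq c_1t^2+c_2|t|^r$ with $c_2>0$ and $c_1$ large enough to close the spectral argument can hold in low dimensions: testing at $t=-1$ forces $c_1<r-1=\tfrac{d}{d-2}$, while beating the $\|Qv\|_2^2$-term via the linear gap $\tfrac{d}{d+4}$ requires $c_1>\tfrac{r(q-1)}{2}\cdot\tfrac{d}{d+4}=\tfrac{d(d-1)(d+2)}{(d-2)^2(d+4)}$, and these two constraints are incompatible for all $d\leq 6$. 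So your scheme cannot close as written.

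The paper's remedy (following Figalli--Zhang more closely than your sketch indicates) is the refined lower bound
\[
|1+a|^p\geq 1+pa+\tfrac{p(1-\kappa)}{2}\bigl(a^2+(p-2)\,\zeta(a)\,(1-|1+a|)^2\bigr)+c_\kappa|a|^p,
\]
where the correction $\zeta(a)=|1+a|^{p-1}$ on $[-2,0]$ vanishes at $a=-1$ and removes the obstruction. The price is that the quadratic form fed into the spectral step is no longer $\|v\|_2^2$ but $\|v\|_2^2+(p-2)\int\zeta(v)(1-|1+v|)^2\,d\mu$, so the linear gap you compute is not directly applicable; the paper proves instead a \emph{non-linear} spectral gap for this modified form (their Proposition~2.4), via a compactness argument reducing to the linear gap in the limit $\|v\|_p\to 0$. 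Your proposal is missing both this refinement and the accompanying non-linear spectral step; it would also need a device (analogous to the paper's Lemma~2.6) to convert the approximate orthogonality from balancing into the exact $(\mathcal H^l)^\perp$ condition the spectral gap requires.
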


\subsubsection*{Dual operator on the unit ball}
On the ball $B^d$, the dual HWY-inequality has the form
\begin{equation}\label{eq:HWYspheredual}\tag{HWYd$\circ$}
   \|Sv\|_{L^{\frac{2(d-1)}{d}}(\mathbb S^{d-1})}\leq \|v\|_{L^{\frac{2d}{d+2}}(B^d)}
\end{equation}
with 
\begin{equation*}
    (Sv)(\omega)\coloneqq \int_{B^d}Q(y,\omega)v(y)\,\mathrm d\nu(y)\,, \qquad \omega\in \Sph^{d-1} \,,
\end{equation*}
for suitably integrable functions $v:B^d\to \R$. Recall that $Q(y,\omega)$ is given by \eqref{eq:Poifctball} and $\mathrm d\nu$ is the uniform probability measure on $B^d$. Equality holds in \eqref{eq:HWYspheredual} if and only if $v$ is constant up to M\"obius transformations. In particular, the M\"obius transformations $\Phi\colon B^d \rightarrow B^d$ act on functions $v$ via 
\begin{equation*}
[v]_\Phi\coloneqq J_\Phi^{\frac{d+2}{2d}}v\circ\Phi\,,
\end{equation*}
where $\Phi:B^d\to B^d$ is a M\"obius transformation and $J_\Phi$ is the corresponding Jacobian.

We can then restate Theorem \ref{thm:HWYstabilitydual} in this setting on the ball.

\begin{theorem}\label{thm:HWYstabilitydualsphere} Let $d\geq 3$. There is a $c_d>0$ such that for all $0\neq v\in L^{\frac{2d}{d+2}}(B^d)$ we have
    \begin{equation*}
      1 -\frac{\|Sv\|^{\frac{2d}{d+2}}_{L^{\frac{2(d-1)}{d}}(\mathbb S^{d-1})}}{ \|v\|_{L^{\frac{2d}{d+2}}(B^d)}^{\frac{2d}{d+2}}}\geq c_d \, \inf_{\lambda,\Phi}\|1-\lambda\,[v]_\Phi\|^{2}_{L^{\frac{2d}{d+2}}(B^d)}\,,
    \end{equation*}
    where the infimum is taken over all $\lambda\in\R$ and over all M\"obius transformations $\Phi$ of $B^d$.
\end{theorem}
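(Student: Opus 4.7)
I would argue by contradiction in the spirit of Bianchi--Egnell. Assume the inequality fails; after normalising $\|v_n\|_{L^r(B^d)}=1$ (with $r=\tfrac{2d}{d+2}$), there is a sequence $v_n$ whose deficit $\delta_n\coloneqq 1-\|Sv_n\|_{L^{2(d-1)/d}(\Sph^{d-1})}^r$ satisfies $\delta_n/\inf_{\lambda,\Phi}\|1-\lambda[v_n]_\Phi\|_{L^r(B^d)}^2\to 0$. By the rigidity in the sharp \eqref{eq:HWYspheredual} and the conformal invariance of both sides, a concentration--compactness argument---whose unique profile is the constant optimiser $1$---allows us to replace $v_n$ by $\lambda_n[v_n]_{\Phi_n}$ with a well-chosen $\lambda_n\in\R$ and Möbius transformation $\Phi_n$, and thereby to assume $v_n=1+\phi_n$ with $\phi_n\to 0$ strongly in $L^r(B^d)$. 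Choosing $(\lambda_n,\Phi_n)$ so as to minimise the distance to the orbit of $1$ produces orthogonality conditions forcing $\phi_n$ into the $L^2(\mathrm d\nu)$-orthogonal complement of a $(d+1)$-dimensional subspace, namely the kernel of the Jacobi operator at $1$: the constant direction (from $\lambda$) and the $d$ non-compact infinitesimal Möbius deformations of $B^d$ (rotations act trivially on $1$).

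Since $r\in(1,2)$, the pointwise expansion $(1+t)^r=1+rt+\tfrac{r(r-1)}{2}t^2+O(\min\{|t|^3,|t|^r\})$, combined with a classical Taylor expansion of $\|S(1+\phi_n)\|_{L^{2(d-1)/d}(\Sph^{d-1})}^r$ that is justified because $S1$ is a strictly positive constant on $\Sph^{d-1}$, yields
\begin{equation*}
\|v_n\|_{L^r(B^d)}^r-\|Sv_n\|_{L^{2(d-1)/d}(\Sph^{d-1})}^r\;=\;\tfrac{r(r-1)}{2}\,\mathcal E(\phi_n)\;+\;\mathrm{error},
\end{equation*}
where $\mathcal E$ is a quadratic form on $L^2(B^d,\mathrm d\nu)$ whose kernel is precisely the $(d+1)$-dimensional space above. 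A spectral-gap bound $\mathcal E(\phi_n)\ge\kappa_d\|\phi_n\|_{L^2(\mathrm d\nu)}^2$ is then obtained by diagonalising $S^*S$ in the Hilbert-space decomposition of $L^2(B^d,\mathrm d\nu)$ into spherical harmonics tensored with the corresponding radial Poisson profiles, computing the eigenvalues explicitly, and checking that the first eigenvalue beyond the symmetry kernel is strictly separated from the threshold.

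The main obstacle is the passage from this $L^2(\mathrm d\nu)$-coercivity to the $L^r(B^d)$-coercivity that appears in the claimed stability bound; as hinted in remark~(e), this is where the subquadratic exponent $r<2$ becomes essential. The standard device, introduced for the $p$-Sobolev inequality by Figalli--Zhang \cite{Figalli2022} and adapted in \cite{Frank2024B}, is to split the ball into $A_n\coloneqq\{|\phi_n|\le 1/2\}$ and $B_n\coloneqq B^d\setminus A_n$: on $A_n$ one has $|\phi_n|^r\le 2^{2-r}|\phi_n|^2$, so the $L^2$-control is converted into $L^r$-control; on $B_n$ the Taylor expansion breaks down, but $\nu(B_n)\to 0$ and $\int_{B_n}|\phi_n|^r\,\mathrm d\nu = o\bigl(\|\phi_n\|_{L^r(B^d)}^r\bigr)$, so its contribution is of lower order after dividing by $\|\phi_n\|_{L^r(B^d)}^2$. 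Executing this bookkeeping so that the cross-terms between $A_n$ and $B_n$ are absorbed into $\mathcal E(\phi_n)$, and computing the spectral gap $\kappa_d>0$ explicitly from the Poisson--Gegenbauer diagonalisation, are the most delicate parts of the argument; the outcome $\mathcal E(\phi_n)+\mathrm{error}\gtrsim \|\phi_n\|_{L^r(B^d)}^2$ then contradicts the failure assumption and closes the proof.
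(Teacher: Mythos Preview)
Your overall architecture (contradiction, normalisation, compactness to reduce to $v_n=1+\phi_n$ with $\phi_n\to 0$ in $L^{q'}$, orthogonality from minimising over the orbit, spectral gap) matches the paper's. The genuine gap is in the local analysis, and it is precisely the difficulty flagged in remark~(e): since $q'=\tfrac{2d}{d+2}<2$, the perturbation $\phi_n$ lives only in $L^{q'}(B^d)$ and need not lie in $L^2(B^d)$. Hence your quadratic form $\mathcal E(\phi_n)$ and the linear spectral-gap bound $\mathcal E(\phi_n)\ge \kappa_d\|\phi_n\|_{L^2}^2$ are not even well-defined on the relevant sequence. The ``standard device'' you propose---splitting into $A_n=\{|\phi_n|\le 1/2\}$ and $B_n$---does not rescue this: your claim that $\int_{B_n}|\phi_n|^{q'}\,\mathrm d\nu=o(\|\phi_n\|_{L^{q'}}^{q'})$ is false in general (take $\phi_n$ supported on a shrinking set where it is large), and the Taylor remainder $O(\min\{|t|^3,|t|^{q'}\})$ integrates to something that, after division by $\|\phi_n\|_{L^{q'}}^2$, blows up like $\|\phi_n\|_{L^{q'}}^{q'-2}\to\infty$.

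The paper circumvents this as follows. First, the Figalli--Zhang elementary inequality in the regime $q'<2$ (Lemma~\ref{lem:elemineq2}(a)) does not give a clean second-order term; it yields instead a lower bound containing both a delicate expression $\phi^2+(q'-2)\zeta(\phi)(1-|1+\phi|)^2$ and a positive contribution $c_\kappa\int\min\{|\phi|^{q'},\phi^2\}\,\mathrm d\nu$---the latter is always finite for $\phi\in L^{q'}$ and is ultimately what produces the $\|\phi_n\|_{L^{q'}}^2$ on the right side (see \eqref{eq:mintoq'}). Second, the linear spectral gap is replaced by a \emph{non-linear} one (Proposition~\ref{prop:anospecV}), proved through an Orlicz-type compactness result (Lemma~\ref{lem:cpctresult}) for sequences controlled in the weighted norm $\int \phi_n^2(1+\varepsilon_n|\phi_n|)^{q'-2}\,\mathrm d\nu$, which interpolates between $L^2$ and $L^{q'}$. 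The key technical step there is a Jensen-inequality argument exploiting that $t\mapsto |t|^{2/q'}(1+\varepsilon|t|)^{(q'-2)/q'}$ is convex, applied to the integral operator $S$; this is the non-local analogue of the Hardy--Poincar\'e inequality used in \cite{Figalli2022}, and it has no counterpart in your outline.
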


%%%%%%%%%%%%%%%%%%%%%%%%%%%

\subsection{Some background}

The inequality \eqref{eq:HWYsphere} is the higher-dimensional analogue of an inequality used in the proof of the isoperimetric inequality in the plane by Carleman \cite{Carleman1921}. Part of the motivation for Hang, Wang, and Yan \cite{HANG2009} was the problem in conformal geometry of finding a metric with vanishing scalar curvature that minimizes the isoperimetric ratio and is conformal to a given metric on a compact manifold with boundary.

\medskip

The HWY-inequality can also be regarded as an integrated version of Beckner's hypercontractivity estimates \cite{beckner_sobolev_1992}; though, the latter do not hold on the full domain of integration. Indeed, the operator $Q_\rho$ from $L^p(\mathbb S^{d-1})$ to $ L^q(\mathbb S^{d-1})$, $1\leq p\leq q\leq \infty$, with integral kernel $$Q_\rho(\tilde\omega, \omega)\coloneqq Q(\rho\tilde \omega,\omega)\,,\qquad \omega,\tilde \omega\in \mathbb S^{d-1}\,,$$ is only a contraction for $\rho^2\leq \frac{p-1}{q-1}$, which is strictly smaller than $1$ for $p=\frac{2(d-1)}{d-2}$ and $q=\frac{2d}{d-2}$. Hence, the HWY-inequality gives additional control over $Q_\rho$ on $\frac{p-1}{q-1}<\rho^2\leq 1$.

\subsubsection*{Stability of functional and geometric inequalities}
Our main motivation comes from the currently very active field of study concerned with quantitative stability properties of functional and geometric inequalities. While \emph{qualitative} stability refers to the fact that optimizing sequences for the corresponding inequalities are relatively compact up to the symmetries of the problem, by \emph{quantitative} stability we mean that the inequalities themselves are strengthened by the addition of a term that measures, in some sense, the distance of a given configuration to the `closest' optimal configuration.

An important impetus for this question came from the study of the isoperimetric inequality \cite{fusco_2008}, but these ideas have been further developed in the last two decades in many other settings, including geometric inequalities \cite{figalli_2010,cicalese_2012,figalli_2017,christ_2017,figalli_2018,frank_2021,vanhintum_2022,vanhintum_2024}, geometric eigenvalue inequalities \cite{brasco_2015,fusco_2017,allen_2023}, functional eigenvalue inequalities \cite{Carlen2014,mazari_2022}, inequalities in harmonic analysis \cite{christ_2014,christ_2021,gomez_2024,frank_2025}, as well as Sobolev inequalities, to which we turn next. We emphasize that these references constitute by no means a complete bibliography and are only intended to give the reader some selected pointers to the large literature.

\subsubsection*{Stability for Sobolev-type inequalities}
In the 80's, Brezis and Lieb \cite{BREZIS198573} raised the question whether the non-negative difference between both sides of the Sobolev inequality can be controlled from below by a notion of distance to the set of optimizers. An affirmative answer was given by Bianchi and Egnell \cite{Bianchi1991}, who proved that the difference is bounded from below by the square of the distance in terms of the $\dot W^{1,2}(\R^d)$-norm. Both the choice of the norm and the stability exponent 2 are best possible. The strategy developed by Bianchi and Egnell is very robust and has become the main tool in investigating stability of functional inequalities; see, for example, \cite{Chen2013} as well as the lecture notes \cite{Frank2023}. In passing we mention some recent developments that bypass the compactness step that is inherent in the Bianchi--Egnell method and lead to explicit constants; see \cite{Bonforte2023,Dolbeault2023,dolbeault_2024,carlen_2025,chen_2024,chen_2025,chen_2025b}.

Already Bianchi and Egnell asked for an extension of their result to the $p$-Sobolev inequality with $1<p<d$. In the absence of a Hilbert space structure, proving stability with the optimal norm and the optimal exponent turned out to be rather difficult. After several partial results (including \cite{Cianchi2009,Figalli2015,Neumayer2019}), this problem was finally settled by Figalli and Zhang \cite{Figalli2022}. As we have already mentioned, when using the $\dot W^{1,p}(\R^d)$-norm to measure the distance to the set of optimizers, they showed that the sharp stability exponent is $\max\{2,p\}$. The stability exponent $p>2$ for $p$-Sobolev-like inequalities was also found in the context of M\"obius transformations \cite{guerra2025} or of an inequality for the total $\sigma_2$-curvature on the sphere \cite{Frank2024B}; see also \cite{koenig2025}. The paper \cite{Frank2024B} introduced the idea of using \textit{two} different notions of distance to the set of minimizers with different stability exponents. The same conclusion appeared independently later in \cite{ignat2025}, where its usefulness for finite element methods is demonstrated.

In general, quantitative stability inequalities with stability exponents larger than two correspond to some sort of degenerate stability. While in the setting of the $p$-Sobolev inequality with $p>2$ (and in our setting) this degenerate stability is mainly due to the behavior of $L^p$-norms in a two-bubble regime, there is a different mechanism leading to degenerate stability, which is based on a zero mode of the Hessian that is not due to symmetries. This appeared in \cite{Engelstein2020} and is expected to lead to the sharp stability exponent 4, as was demonstrated in several examples in \cite{Frank2023a}. Further works where this mechanism was explored include \cite{Frank2024,  brigati2024, andrade2024, nobili2025}.

\subsection{Overview and notation}
It is well understood that the Bianchi--Egnell strategy is a robust way of proving quantitative stability, with optimal stability exponent, for functional inequalities in the presence of a Hilbert space structure. We consider our work here as a step towards understanding to which extent the Figalli--Zhang strategy serves a similar purpose in the absence of a Hilbert space structure. In particular, we show that the ingredients in their strategy are not tied to the local nature of the gradient operator that appears in their work but have analogues for the nonlocal Poisson operator as well. Verifying these individual ingredients requires rather different ad hoc arguments, most notably in the proof of the non-linear spectral gap inequality in Proposition \ref{prop:anospecV}. Specific to our setting is the conformal invariance, which is explained in Appendix \ref{sec:confinv} and which plays a crucial role in our proofs.

The remainder of this paper is divided into three sections. We establish stability of the HWY-inequality in Section \ref{sec:2} and for its dual in Section \ref{sec:3}. Finally, Section \ref{sec:4} deals with the sharpness of the stability exponents.
Henceforth, all proofs are carried out on the ball and then transferred to the half-space by stereographic projection.

Throughout, we write $\|\cdot\|_r\coloneqq \|\cdot\|_{L^{r}(\mathbb S^{d-1})}$ and $\|\cdot\|_r\coloneqq \|\cdot\|_{L^{r}(B^d)}$ for $1\leq r\leq\infty$. It should always be clear from the context whether the functions are defined on the sphere or on the ball. The same applies to orthogonality with respect to $L^2(\mathbb S^{d-1})$ and $L^2(B^d)$. Moreover, we set
$$p\coloneqq \frac{2(d-1)}{d-2}\,,\qquad q\coloneqq \frac{2d}{d-2}$$ with H\"older conjugates
$$p'\coloneqq \frac{2(d-1)}{d}\,,\qquad q'\coloneqq \frac{2d}{d+2}\,.$$
Lastly, for $\leq$ valid up to a multiplicative constant, possibly depending on the dimension, we will write $\lesssim$ and $\gtrsim$ for the reverse.

%%%%%%%%%%%%%%%%%%%%%%%%%%%%%%%%%%%%%%%%%%%
%%%%%%%%%%%%%%%%%%%%%%%%%%%%%%%%%%%%%%%%%%%

\section{Stability for the HWY-inequality}
\label{sec:2}

Our goal in this section is to provide a proof of Theorems \ref{thm:HWYstability} and \ref{thm:HWYstabilitysphere}.

%%%%%%%%%%%%%%%%%%%%%%%%%%%%%%%%%%%%%%%%%%%

\subsection{Strategy of the proof}

Our proof of stability for the HWY-inequality, like many other stability proofs since the work of Bianchi and Egnell \cite{Bianchi1991}, follows a two-part argument consisting of a global-to-local reduction and a local bound. In this subsection we describe what these steps mean in the present set-up and explain how they lead to the main result, Theorem~\ref{thm:HWYstabilitysphere}.

The first part, namely the global-to-local reduction, consists of a concentration compactness principle. For the HWY-inequality on the half-plane \eqref{eq:HWY} this was shown by Hang, Wang, and Yan \cite[Theorem 3.1]{Hang2008}. Translating this to the sphere via stereographic projection already establishes this step.

\begin{proposition}[Global-to-local reduction, \cite{Hang2008}]
  \label{prop:qual}  Let $(u_n)\subset L^{p}(\mathbb S^{d-1})$ be a sequence of functions with \begin{equation*}
        \|u_n\|_{p}\to 1\qquad \text{and}\qquad \|Qu_n\|_{q}\to 1
   \end{equation*} 
   as $n\to\infty$. Then \begin{equation*}
      \inf_{\Psi, \lambda\in\{\pm 1\}} \|\lambda(u_n)_\Psi-1\|_{p}\to 0 \qquad \text{as} \ n\to\infty\,.
   \end{equation*}
\end{proposition}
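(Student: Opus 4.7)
The plan is a concentration-compactness argument on the sphere, exploiting the conformal invariance of both norms appearing in the hypothesis (see Appendix~\ref{sec:confinv}). I would associate to $(u_n)$ the non-negative measures $\sigma_n \coloneqq |u_n|^p\,\dd\mu$, which by hypothesis satisfy $\sigma_n(\Sph^{d-1})\to 1$. Applying P.~L.~Lions' concentration-compactness dichotomy to $(\sigma_n)$, along a subsequence exactly one of the three scenarios vanishing, dichotomy, or compactness (possibly after M\"obius rescaling) occurs; since the conclusion is invariant under passage to subsequences, it suffices to reach a contradiction in the first two and to produce the desired approximation in the third.

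\emph{Vanishing} is ruled out because if no positive portion of the mass localizes in any fixed ball, a direct estimate splitting the Poisson kernel over the small and large scales forces $\|Qu_n\|_q\to 0$, contradicting the hypothesis. For \emph{dichotomy}, write $u_n = v_n + w_n + o_{L^p}(1)$ with $\supp v_n$ and $\supp w_n$ at asymptotically positive spherical distance and $\|v_n\|_p^p\to\alpha$, $\|w_n\|_p^p\to 1-\alpha$ for some $\alpha\in(0,1)$. Decay of the Poisson kernel away from its boundary singularity makes $Qv_n$ and $Qw_n$ asymptotically concentrated in disjoint regions of $B^d$, yielding
\begin{equation*}
  \|Qu_n\|_q^q = \|Qv_n\|_q^q + \|Qw_n\|_q^q + o(1) \leq \alpha^{q/p}+(1-\alpha)^{q/p}+o(1)
\end{equation*}
by \eqref{eq:HWYsphere} applied to each piece. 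Since $q/p = d/(d-1)>1$, we have $\alpha^{q/p}+(1-\alpha)^{q/p}<1$ on $(0,1)$, contradicting $\|Qu_n\|_q\to 1$.

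Only the \emph{compactness} case remains. The mass of $\sigma_n$ localizes around some $\omega_n\in\Sph^{d-1}$ at a scale $r_n>0$, and choosing M\"obius transformations $\Psi_n$ of $\Sph^{d-1}$ that send $\omega_n$ to a fixed basepoint and rescale $r_n$ to a fixed scale produces, by conformal invariance, a tight sequence $\tilde u_n \coloneqq (u_n)_{\Psi_n}$ still satisfying $\|\tilde u_n\|_p\to 1$ and $\|Q\tilde u_n\|_q\to 1$. A weakly convergent subsequence has a limit $u_\infty$ that saturates \eqref{eq:HWYsphere} by weak lower semicontinuity of the $L^p$-norm, so $u_\infty$ is an optimizer; after composing $\Psi_n$ with a further fixed M\"obius map if needed to absorb the residual conformal freedom, $u_\infty$ is a constant, and the constraint $\|u_\infty\|_p = 1$ forces $u_\infty = \lambda\in\{\pm 1\}$, with the sign dictated by the dominant sign of $u_n$ on the concentration region. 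Weak convergence combined with convergence of the $L^p$-norm upgrades to strong convergence by uniform convexity of $L^p$.

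The main obstacle is the compactness case: the M\"obius rescaling $\Psi_n$ must be selected so that the limit profile is neither null nor still concentrating at a subscale, and identification of the weak limit as the \emph{constant} $\pm 1$ relies on the rigidity of \eqref{eq:HWYsphere} together with absorbing all remaining conformal freedom into $\Psi_n$. Handling the dichotomy step also requires a quantitative estimate showing that the spherical separation of $\supp v_n$ and $\supp w_n$ translates into $L^q$-disjointness of the harmonic extensions $Qv_n$ and $Qw_n$ in $B^d$, since $Q$ has no compact support properties.
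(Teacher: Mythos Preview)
The paper does not give its own proof of this proposition: it simply observes that the half-space version was established by Hang--Wang--Yan \cite[Theorem~3.1]{Hang2008} via concentration-compactness, and then transfers the statement to the sphere by stereographic projection (Appendix~\ref{sec:confinv}). Your proposal is, in effect, a sketch of the concentration-compactness argument that underlies \cite[Theorem~3.1]{Hang2008}, carried out directly on $\Sph^{d-1}$. So the approaches are the same in substance; the difference is that the paper cites the result while you outline its proof.

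One point in your sketch deserves correction. You invoke Lions' trichotomy (vanishing/dichotomy/compactness) for the measures $|u_n|^p\,\dd\mu$ on $\Sph^{d-1}$, but the sphere is compact, so vanishing in Lions' sense cannot occur: the whole sphere is a fixed ball carrying all the mass, and your argument ruling out vanishing via $\|Qu_n\|_q\to 0$ is never triggered. The genuine non-compactness here comes from the M\"obius group, and the relevant alternatives are that the weak-$*$ limit of $|u_n|^p\,\dd\mu$ is (i) a single Dirac mass, (ii) a sum of Diracs (your ``dichotomy''), or (iii) sufficiently spread that a M\"obius rescaling yields $L^p$-compactness. If you want to use Lions' lemma in its literal form, it is cleaner to pull back to $\R^{d-1}$ via stereographic projection (where translations and dilations supply the rescalings) and then transfer the conclusion back --- which is exactly what the paper does by citing \cite{Hang2008}. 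Your identification of the limit as $\pm 1$ and the upgrade from weak to strong convergence via norm convergence and uniform convexity are correct.
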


Most of the work in this section concerns the second part of the strategy, namely proving a local bound. We summarize the outcome as follows.

\begin{proposition}[Local bound]\label{prop:localstability}
    There exists a constant $c_d > 0$ such that, for any sequence $(u_n) \subset L^{p}(\Sph^{d-1})$ satisfying $\|u_n\|_{p}=1$ for all $n$ and $ \inf_\Psi\|(u_n)_\Psi-1\|_{p}\to 0$ as $n\rightarrow \infty$, we have
    \begin{equation*}
        \liminf_{n\rightarrow\infty}\frac{1-\|Qu_n\|^p_{q}}{\inf_\Psi \left(\|(u_n)_\Psi-1\|_{p}^{p}+\|(u_n)_\Psi-1\|_{2}^2\right)}\geq c_d\,. 
    \end{equation*}
\end{proposition}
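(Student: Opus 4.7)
I argue by contradiction: suppose that along some subsequence the ratio tends to $0$. The numerator $1-\|Qu_n\|_q^p$ is conformally invariant, and the denominator is an infimum over $\Psi$, hence bounded above by its value at any choice of $\Psi_n$. It therefore suffices to exhibit one such $\Psi_n$ satisfying
$$1-\|Qu_n\|_q^p \gtrsim \|(u_n)_{\Psi_n}-1\|_p^p + \|(u_n)_{\Psi_n}-1\|_2^2.$$
By Proposition~\ref{prop:qual}, I may assume $(u_n)_{\Psi_n}\to 1$ in $L^p(\mathbb S^{d-1})$; replacing $u_n$ by $(u_n)_{\Psi_n}$ and writing $v_n := u_n-1$, I further adjust $\Psi_n$ via an inverse-function argument so that $v_n$ is $L^2$-orthogonal to each of the first spherical harmonics $Y_{1,j}$, $j=1,\ldots,d$. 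The dimension of the non-rotational part of the M\"obius group of $\mathbb S^{d-1}$ equals $\dim Y_1 = d$, so the linearization of the orthogonality map at the identity is invertible and such $\Psi_n$ exists for all large $n$. The normalization $\|u_n\|_p=1$ together with a Taylor expansion additionally forces $\int v_n\,d\mu = O(\|v_n\|_2^2+\|v_n\|_p^p)$.

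The central algebraic input is that the Poisson extension acts diagonally on spherical harmonics: if $v_n = \sum_k \hat v_{n,k}$, then $Qv_n(r\omega) = \sum_k r^k \hat v_{n,k}(\omega)$ and
$$\|Qv_n\|_{L^2(B^d)}^2 = \sum_k \tfrac{d}{d+2k}\|\hat v_{n,k}\|_{L^2(\mathbb S^{d-1})}^2.$$
After the orthogonality step this sum runs over $k\geq 2$, giving a contraction factor at most $d/(d+4)$. Since $p-1 = d/(d-2)$ and $q-1 = (d+2)/(d-2)$, a direct check yields the \emph{linear $L^2$ spectral gap}
$$(p-1)\|v_n\|_2^2 - (q-1)\|Qv_n\|_2^2 \geq \tfrac{2d}{(d-2)(d+4)}\|v_n\|_2^2.$$
Expanding both $\|1+v_n\|_p^p = 1$ and $\|1+Qv_n\|_q^p = (\|1+Qv_n\|_q^q)^{p/q}$ to quadratic order, the linear terms $p\int v_n\,d\mu$ cancel exactly, and the quadratic residual reads
$$\tfrac{p}{2}\bigl[(p-1)\|v_n\|_2^2 - (q-1)\|Qv_n\|_2^2\bigr] + \tfrac{p(q-p)}{2}\Bigl(\int v_n\,d\mu\Bigr)^{\!2},$$
both summands non-negative, and the first bounded below by a positive constant times $\|v_n\|_2^2$ by the spectral gap.

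For the $\|v_n\|_p^p$-contribution I will use the pointwise inequality, valid for $p>2$,
$$|1+t|^p \geq 1 + pt + \tfrac{p(p-1)}{2}t^2 + c_p\,|t|^p\,\mathbbm{1}_{|t|\geq 1} - C_p\,|t|^3\,\mathbbm{1}_{|t|\leq 1},$$
together with a matching upper bound for $|1+s|^q$. The essential asymmetry is that the Poisson extension smooths concentrations: a spike of $v_n$ on a small subset of $\mathbb S^{d-1}$ is dispersed across a bulk region of $B^d$, so the super-quadratic excess $\|v_n\,\mathbbm{1}_{|v_n|\geq 1}\|_p^p$ enters the lower bound on the primal side \emph{without} a counterbalancing contribution on the dual side. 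Since $\|v_n\,\mathbbm{1}_{|v_n|\leq 1}\|_p^p \leq \|v_n\|_2^2$, one has $\|v_n\|_p^p \lesssim \|v_n\|_2^2 + \|v_n\,\mathbbm{1}_{|v_n|\geq 1}\|_p^p$, and combining with the spectral-gap term produces $1-\|Qu_n\|_q^p \gtrsim \|v_n\|_2^2 + \|v_n\|_p^p$, contradicting the assumed decay of the ratio.

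The main obstacle will be controlling the sub-leading remainders uniformly as $\|v_n\|_p\to 0$. The cubic term $\int |v_n|^3\,\mathbbm{1}_{|v_n|\leq 1}\,d\mu$ must be absorbed into $o(\|v_n\|_2^2+\|v_n\|_p^p)$ by splitting the domain at a small threshold $\varepsilon$ and invoking a Markov estimate $\mu\{|v_n|>\varepsilon\}\leq \varepsilon^{-p}\|v_n\|_p^p$; the analogous cubic $L^q$-remainder of $Qv_n$ is to be controlled via the sharp continuity of $Q\colon L^p(\mathbb S^{d-1})\to L^q(B^d)$ from \eqref{eq:HWYsphere}. One must also track the residual $\int v_n\,d\mu$, which is not independently bounded by $\|v_n\|_2^2$, through its exact cancellation between the two Taylor expansions, so that no term of size $|\int v_n\,d\mu|$ appears uncontrolled in the final estimate.
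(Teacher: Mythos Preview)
Your outline captures the right overall architecture (reduce to an almost-orthogonal perturbation, Taylor-expand, compare quadratic terms via a spectral gap, and extract the $L^p$-distance from the super-quadratic remainder), but the key analytic step fails as written.

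The pointwise lower bound you propose,
\[
|1+t|^p \;\geq\; 1 + pt + \tfrac{p(p-1)}{2}\,t^2 + c_p\,|t|^p\,\1_{|t|\geq 1} - C_p\,|t|^3\,\1_{|t|\leq 1},
\]
cannot hold with any $c_p>0$: evaluate at $t=-1-\epsilon$ (so the cubic correction is absent) to get left side $\epsilon^p$ while the right side equals $\tfrac{(p-1)(p-2)}{2}+c_p+O(\epsilon)>0$. The obstruction is that $|1+t|^p-1-pt$ behaves \emph{linearly}, not quadratically, near $t=-1$, so one cannot keep the full Taylor coefficient $\tfrac{p(p-1)}{2}$ on $t^2$ and still have a non-negative $|t|^p$-excess. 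Even in the region $|t|\leq 1$ where your cubic term is active, the error $\int |v_n|^3\,\1_{|v_n|\leq 1}\,\mathrm d\mu$ is \emph{not} $o(\|v_n\|_2^2+\|v_n\|_p^p)$: take $v_n$ of height $\sim 1$ on a set of measure $\epsilon_n\to 0$ (orthogonalized), so that $\|v_n\|_2^2$, $\|v_n\|_p^p$, and $\int|v_n|^3$ are all $\sim\epsilon_n$. Your threshold-and-Markov splitting gives $\varepsilon\|v_n\|_2^2 + \varepsilon^{-p}\|v_n\|_p^p$, and no choice of $\varepsilon$ makes both coefficients small.

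This is precisely the difficulty that the paper's argument is designed around. Instead of the naive Taylor inequality, the paper uses the Figalli--Zhang lower bound (Lemma~\ref{lem:elemineq}(a)), in which the quadratic term $a^2$ is replaced by $a^2+(p-2)\zeta(a)(1-|1+a|)^2$ with a coefficient $p(1-\kappa)/2$; the weight $\zeta(a)=|1+a|^{p-1}$ on $[-2,0]$ damps the second-order term exactly where your inequality breaks, and in exchange one obtains a genuine $c_\kappa|a|^p$ remainder with no cubic loss. The price is that the ``quadratic'' object to be controlled by the spectral gap is now the non-linear quantity $\int(\phi^2+(p-2)\zeta(\phi)(1-|1+\phi|)^2)\,\mathrm d\mu$, and this forces a \emph{non-linear} spectral gap inequality (Proposition~\ref{prop:anospec}) rather than the linear $L^2$ gap you invoke. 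Your linear gap is an ingredient in proving that non-linear version, but by itself it is not enough to close the argument.
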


Combining both propositions yields our main stability result, Theorem \ref{thm:HWYstabilitysphere}, via a standard contradiction argument. We include it for the sake of completeness.

\begin{proof}[Proof of Theorem \ref{thm:HWYstabilitysphere}]
	By contradiction, assume that there is a sequence $(u_n)\subset L^{p}(\mathbb S^{d-1})$ with
	\begin{equation}
		\label{eq:thmproofass}
		\frac{ 1-\|Qu_n\|^p_{q}/\| u_n \|^p_p}{\inf_{\lambda,\Psi} \left(  \|\lambda\,(u_n)_\Psi-1\|_{p}^{p} +\left\|\lambda\,(u_n)_\Psi-1 \right\|^{2}_{2} \right)} \to 0
	\end{equation} as $n\to\infty$. As the quotient is $0$-homogeneous, we may normalize the sequence $(u_n)$ by $\| u_n \|_p = 1$ for all $n$. Since
	$$ \inf_{\lambda,\Psi} \left(\| \lambda\,(u_n)_\Psi-1\|_{p}^{p} +\left\|\lambda\,(u_n)_\Psi-1\right\|^{2}_{2}\right)\leq \|1 \|_{p}^{p}+\| 1 \|_{2}^2\,,
	$$ we deduce from \eqref{eq:thmproofass} that $ \|Qu_n\|_{q}\to 1$ as $n\to\infty$. Hence, Proposition \ref{prop:qual} implies that $\inf_{\Psi,\lambda\in\{\pm 1\}} \| \lambda (u_n)_{\Psi} - 1 \|_{p} \to 0$ as $n\to\infty$. Passing to a subsequence and replacing $u_n$ by $-u_n$ if necessary, we may assume that $\inf_{\Psi} \| (u_n)_{\Psi} - 1 \|_{p} \to 0$ as $n\to\infty$. As
    \begin{equation*}
		\inf_{\lambda,\Psi} \left(\| \lambda\,(u_n)_\Psi-1\|_{p}^{p} +\left\|\lambda\,(u_n)_\Psi-1\right\|^{2}_{2}\right) \leq \inf_{\Psi} \left(  \| (u_n)_{\Psi} - 1 \|_{p}^{p}+\| (u_n)_{\Psi} - 1 \|_{2}^2 \right),
	\end{equation*}
    an application of Proposition \ref{prop:localstability} leads to a contradiction with \eqref{eq:thmproofass}.
\end{proof}

\begin{remark}\label{rem:HWYstabilitysphere}
    For later purposes, we record the following strengthening of Theorem \ref{thm:HWYstabilitysphere}: For $u\neq 0$ the lower bound \eqref{eq:stabQ} holds, where on the right side we set $\lambda= \pm\|u\|_p^{-1}$ and minimize over the choice of sign rather than over all $\lambda\in\R$. Indeed, for $\|u\|_p=1$ this was shown in the previous proof, and for general $u\neq 0$ this follows by homogeneity. 
\end{remark}

Theorem \ref{thm:HWYstability} now follows by stereographic projection. The details are as follows.

\begin{proof}[Proof of Theorem \ref{thm:HWYstability}]
    Let $\mathcal S: \R^{d-1}\cup\{\infty\}\to\Sph^{d-1}$ and $\Sigma:\R^d_+\to B^d$ denote the inverse stereographic projection and its conformal extension; see Appendix \ref{sec:confinv}. Let further $f\in L^p(\R^{d-1})$ with $\|f\|_{L^p(\R^{d-1})}=1$ and 
    $$
    u \coloneqq |\Sph^{d-1}|^\frac1p \, J_{\mathcal S^{-1}}^\frac1p \, f\circ \mathcal S^{-1} \,.
    $$
    We will use \eqref{eq:confinv} and recall the explicit value $C_d = (d^{d-1}|\Sph^{d-1}|)^{1/d}$. Applying Theorem \ref{thm:HWYstabilitysphere} (or rather Remark \ref{rem:HWYstabilitysphere}), we obtain
    $$
    \| f\|_{L^p(\R^{d-1})}^p - C_d \|Qf\|_{L^q(\R^d_+)}^p = \|u \|_p^p - \|Q u\|_{q}^p \geq c_d \, \inf_{\Psi,\lambda\in\{\pm 1\}} \left( \| \lambda(u)_\Psi - 1 \|_p^p + \|\lambda (u)_\Psi -1\|_2^2 \right)
    $$
    We set
    $$
    f_{\Psi^{-1}} \coloneqq |\Sph^{d-1}|^{-\frac1p} J_{\mathcal S}^\frac1p \, (1)_{\Psi^{-1}}\circ\mathcal S \,.
    $$ 
    This is the function on $\R^{d-1}$ that corresponds under the above mapping $f\mapsto u$ to the function $(1)_{\Psi^{-1}}$ on $\Sph^{d-1}$. Note that for $\lambda\in\{\pm 1\}$, by a change of variables,
    \begin{align*}
        \| \lambda (u)_\Psi - 1 \|_p^p = \int_{\Sph^{d-1}} |u - \lambda (1)_{\Psi^{-1}}|^p\,\mathrm d\mu
        = \int_{\R^{d-1}} |f- \lambda f_{\Psi^{-1}}|^p\,\mathrm d\xi
        \intertext{and}
        \| \lambda (u)_\Psi - 1 \|_2^2 = \int_{\Sph^{d-1}} (u - \lambda (1)_{\Psi^{-1}})^2 (1)_{\Psi^{-1}}^{2-p} \,\mathrm d\mu
        = \int_{\R^{d-1}} (f- \lambda f_{\Psi^{-1}})^p f_{\Psi^{-1}}^{2-p} \,\mathrm d\xi\,.
    \end{align*}
    The claimed bound follows from the fact that the functions $f_{\Psi^{-1}}$, with $\Psi$ ranging through M\"obius transformations of $\Sph^{d-1}$, coincide with the functions $f_{a_*,b,\xi_0}$, with $(b,\xi_0)$ ranging through $\R_+\times\R^{d-1}$. Here $a_*\coloneqq 2^{\frac{d-2}{2}}|\mathbb S^{d-1}|^{-\frac{1}{p}}$ is fixed. 
\end{proof}

%%%%%%%%%%%%%%%%%%%%%%%%%%%%%%%%%%%%%

\subsection{Preliminaries on the expansion}

For the proof of the local bound, Bianchi--Egnell \cite{Bianchi1991} and many subsequent works made use of the fact that the `strong' norm in their inequalities is induced by a Hilbert space structure. This is not the case in our setting, where the strong norm is the one in $L^p$. The breakthrough in the non-Hilbertian setting came in the work of Figalli and Zhang \cite{Figalli2022}. They promoted the use of `elementary inequalities', which, when specialized to our setting, read as follows; see \cite[Lemmas 2.1 and 2.4]{Figalli2022} and also \cite[Lemma 3.2]{Figalli2015}. 

\begin{lemma}[Elementary inequalities, \cite{Figalli2022}]\label{lem:elemineq}
Let $\kappa>0$.
\begin{enumerate}
    \item[(a)] There is a constant $c_\kappa>0$ such that
for any $a\in \R$ we have the lower bound
    $$|1+a|^{p}\geq 1+pa+p\frac{1-\kappa}{2}\left(a^2+(p-2)\zeta(a)(1-|1+a|)^2\right)+c_\kappa|a|^{p}\,,$$
    where
    $$\zeta(a)\coloneqq\begin{cases}
       \,\quad 1 &\quad\text{if}\ a\in [-2,0]^c\,,\\ |1+a|^{p-1} & \quad\text{if}\ a\in [-2,0]\,.
    \end{cases}
    $$
     \vspace{0.05cm}
    \item[(b)] There is a constant $C_\kappa>0$ such that
for any $a\in \R$ we have the upper bound
    $$|1+a|^{q}\leq 1+qa+\left(\frac{q(q-1)}{2}+\kappa\right)a^2+C_\kappa|a|^{q}\,.$$
\end{enumerate}
\end{lemma}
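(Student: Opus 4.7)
Both inequalities are pointwise statements about functions of a single real variable, and the strategy in each case is a direct case analysis by calculus, following the template of Figalli--Zhang \cite{Figalli2022}. I would begin with part (b), the easier of the two. Set
\begin{equation*}
F(a) \coloneqq 1 + qa + \Bigl(\tfrac{q(q-1)}{2} + \kappa\Bigr) a^2 + C_\kappa |a|^q - |1+a|^q,
\end{equation*}
so the goal is $F \geq 0$. Since $q > 2$, the map $a \mapsto |1+a|^q$ is of class $C^2$ with Taylor expansion $|1+a|^q = 1 + qa + \tfrac{q(q-1)}{2}a^2 + O(|a|^3)$ near the origin, so for $|a| \leq \delta$ with $\delta$ small depending only on $q$ we have $F(a) \geq \kappa a^2 - C|a|^3 \geq 0$. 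For $|a| \geq \delta$, each of the polynomial terms on the right side is bounded by $C(\delta) |a|^q$, while $|1+a|^q \leq (1+|a|)^q \leq C|a|^q$, so choosing $C_\kappa$ large enough (depending on $\delta$, hence ultimately on $\kappa$) closes the bound.

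Part (a) is more delicate because $p > 2$ and the correction $(p-2)\zeta(a)(1-|1+a|)^2$ is tailored to capture the degenerate second-order behaviour of $|1+a|^p$ near the cusp at $a = -1$. Set
\begin{equation*}
G(a) \coloneqq |1+a|^p - 1 - pa - \frac{p(1-\kappa)}{2} \bigl( a^2 + (p-2)\zeta(a)(1-|1+a|)^2 \bigr) - c_\kappa |a|^p.
\end{equation*}
I would prove $G \geq 0$ by distinguishing three regimes. For $|a| \leq \delta$ small, a direct computation (separating $a \in [-\delta,0]$, where $\zeta(a) = (1+a)^{p-1}$ and $1-|1+a| = -a$, from $a \in [0,\delta]$, where $\zeta(a) = 1$ and $1-|1+a| = -a$) shows that $\zeta(a)(1-|1+a|)^2 = a^2 + O(|a|^3)$, so the quadratic piece subtracted in $G$ has coefficient $\tfrac{p(1-\kappa)(p-1)}{2}$, strictly less than the true Taylor coefficient $\tfrac{p(p-1)}{2}$ by an amount proportional to $\kappa$. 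This yields $G(a) \geq \tfrac{\kappa p(p-1)}{2} a^2 - O(|a|^3) - c_\kappa |a|^p \geq 0$ for $c_\kappa$ small enough. For $|a| \geq M$ large, $|1+a|^p$ grows like $|a|^p$ and dominates every lower-order term, so again $c_\kappa$ small works. On the compact intermediate range $\delta \leq |a| \leq M$, it suffices to verify the inequality with $c_\kappa = 0$ and a quantitatively strict margin, then absorb $c_\kappa|a|^p$ by taking $c_\kappa$ small.

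The main obstacle is the analysis near $a = -1$. There the map $a \mapsto |1+a|^p$ is only $C^{p-1}$; its second derivative $p(p-1)|1+a|^{p-2}$ vanishes at the cusp rather than matching the constant $\tfrac{p(p-1)}{2}$ used globally, so the naive quadratic Taylor expansion breaks down. The factor $\zeta(a) = |1+a|^{p-1}$ on $[-2, 0]$ is designed precisely so that the full quadratic-plus-correction term vanishes to the correct order $|1+a|^{p-1}$ at $a = -1$, mirroring the factorisation $|1+a|^p = |1+a|^{p-1} \cdot |1+a|$. At $a = -1$ itself the inequality reduces to the scalar bound $c_\kappa \leq \tfrac{p-2+p\kappa}{2}$, which is positive because $p > 2$; promoting this pointwise strict bound to a full neighborhood of the cusp and choosing $(c_\kappa, \delta, M)$ to close all three regimes simultaneously constitutes the bulk of the work, and for the routine but lengthy arithmetic I would follow the detailed computations of \cite{Figalli2022} and \cite{Figalli2015}.
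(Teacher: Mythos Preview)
The paper does not give its own proof of this lemma: it is quoted verbatim from Figalli--Zhang, with the pointer ``see \cite[Lemmas 2.1 and 2.4]{Figalli2022} and also \cite[Lemma 3.2]{Figalli2015}'' and nothing further. Your sketch is a correct outline of the standard calculus argument in those references --- Taylor expansion near $a=0$, growth comparison for $|a|$ large, and for part~(a) the careful treatment of the cusp at $a=-1$ where $\zeta(a)=|1+a|^{p-1}$ is engineered to match the vanishing rate of the second derivative of $|1+a|^{p}$ --- so there is nothing to compare beyond noting that you have supplied what the paper omits.
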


Besides mimicking an expansion around $a=0$ up to second order, the positive term $c_\kappa|a|^p$ is the key term in (a). From this term the $L^p$-distance can be eventually retrieved.

%%%%%%%%%%%%%%%%%%%%%%%%%%%%%%%%%%%%%%%%%%%%%%%%

\subsection{Spectral gaps}\label{subsec:specgap}

The goal of this subsection is to prove a `non-linear spectral gap' inequality. We recall that the quantity $\zeta$ was introduced in Lemma \ref{lem:elemineq}. We further denote by $\mathcal H^l$ the space spanned by spherical harmonics of degree $0$ and $1$, or equivalently the space of restrictions of affine linear functions on $\R^{d}$ to $\mathbb S^{d-1}$.
 
\begin{proposition}[Non-linear spectral gap inequality for $Q$]\label{prop:anospec}
     Let $d\geq 3$ and $\lambda\in (0,2)$. There is a $\delta=\delta(d, \lambda)>0$ such that for any $\phi\in L^{p}(\mathbb S^{d-1})\cap (\mathcal H^l)^\perp$ with $\|\phi\|_{p}< \delta$ we have
     $$
     \|\phi\|_2^2+(p-2)\int_{\mathbb S^{d-1}} \zeta(\phi)(1-|1+\phi|)^2\, \mathrm d\mu \geq (p-1)\frac{d+2+\lambda}{d}\|Q\phi\|_{2}^2\,.
     $$
\end{proposition}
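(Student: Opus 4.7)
The plan is a direct proof combining the spherical-harmonic spectral decomposition of $Q$ with a low/high frequency splitting and pointwise bounds on $\zeta(\phi)(1-|1+\phi|)^2$. Decomposing $\phi = \sum_{k \geq 2}\phi_k$ into spherical harmonics (the sum starting at $k=2$ since $\phi \perp \mathcal H^l$) and using $\|Q\phi_k\|_{L^2(B^d,\nu)}^2 = \tfrac{d}{d+2k}\|\phi_k\|_2^2$, the claim is equivalent to
\[
(p-1)\sum_{k\geq 2}\gamma_k\|\phi_k\|_2^2 \geq (p-2)\int_{\Sph^{d-1}} f(\phi)\,d\mu,\qquad \gamma_k := \tfrac{2k-2-\lambda}{d+2k},
\]
where $f(\phi) := \phi^2 - \zeta(\phi)(1-|1+\phi|)^2$. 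A direct case-by-case inspection shows $f \geq 0$, $f \equiv 0$ on $\{\phi \geq 0\}$, and the pointwise bounds $f(\phi) \leq C\phi^2\min(|\phi|,1)$ and $|f'(\phi)| \leq C\min(\phi^2,1)$. Moreover $\gamma_k \geq \gamma_2 = (2-\lambda)/(d+4) > 0$ and $\gamma_k \to 1$ as $k\to\infty$.

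I would fix $K = K(d,\lambda)$ large enough that $(p-1)\gamma_{K+1} \geq p-2$, and $c_0 = c_0(d,\lambda) > 0$ small enough that $c_0^2\tfrac{d}{d+4} + \tfrac{d}{d+2K+2} \leq \tfrac{d}{(p-1)(d+2+\lambda)}$. Splitting $\phi = \phi^{\leq K} + \phi^{> K}$ along the spectrum, I would then distinguish two cases. In the \emph{concentration case} $\|\phi^{\leq K}\|_2 \leq c_0\|\phi\|_2$, the spectral bound $\|Q\phi\|_2^2 \leq \tfrac{d}{d+4}\|\phi^{\leq K}\|_2^2 + \tfrac{d}{d+2K+2}\|\phi^{> K}\|_2^2$ together with the choice of $K,c_0$ yields $\tfrac{(p-1)(d+2+\lambda)}{d}\|Q\phi\|_2^2 \leq \|\phi\|_2^2$, which is at most the left-hand side of the proposition (since the $\zeta$-term is non-negative), so the claim holds without any smallness assumption on $\|\phi\|_p$.

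In the \emph{non-concentration case} $\|\phi^{\leq K}\|_2 > c_0\|\phi\|_2$, the spectral estimate and the choice of $K$ give $(p-1)\sum_k \gamma_k \|\phi_k\|_2^2 \geq (p-1)\gamma_2 \|\phi^{\leq K}\|_2^2 + (p-2)\|\phi^{> K}\|_2^2$. A mean-value expansion $f(\phi) - f(\phi^{> K}) = f'(\xi)\phi^{\leq K}$, combined with the bounds $|f'|\leq C\min(\xi^2,1)$ and $f(\phi^{> K}) \leq (\phi^{> K})^2$, gives $\int f(\phi)\,d\mu \leq \|\phi^{> K}\|_2^2 + C\|\phi^{\leq K}\|_\infty\|\phi\|_2^2$. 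Finally, norm equivalence on the finite-dimensional low-frequency space together with orthogonality of the projection yields $\|\phi^{\leq K}\|_\infty \leq C_K\|\phi^{\leq K}\|_2 \leq C_K\|\phi\|_p$; combined with the case assumption $\|\phi^{\leq K}\|_2^2 \geq c_0^2\|\phi\|_2^2$, the cross-term is absorbed into $(p-1)\gamma_2\|\phi^{\leq K}\|_2^2$ as long as $\|\phi\|_p < \delta(d,\lambda)$ for a suitably small $\delta$, leaving the matching $(p-2)\|\phi^{> K}\|_2^2$ contributions to cancel.

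The main obstacle is the cross-term control in the non-concentration case, which both dictates the smallness threshold $\delta$ in the statement and forces a quantitative use of the $L^\infty$--$L^2$--$L^p$ norm equivalence on the finite-dimensional low-frequency subspace $\bigoplus_{2\leq k\leq K}\mathcal H^k$. The concentration case is handled by the spectral gap alone and holds for all $\phi \perp \mathcal H^l$ regardless of size; only in the non-concentration case does the structure of $\zeta(\phi)(1-|1+\phi|)^2$ beyond the leading quadratic behaviour need to be controlled.
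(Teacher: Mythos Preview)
Your proof is correct and takes a genuinely different route from the paper's. The paper argues by contradiction: assuming a sequence $(\phi_n)$ with $\phi_n\to 0$ in $L^p$ violates the inequality, it normalizes $\hat\phi_n=\phi_n/\|\phi_n\|_2$, passes to a weak $L^2$-limit $\hat\phi$, uses the compactness of $Q:L^2(\Sph^{d-1})\to L^2(B^d)$ to get $\|Q\hat\phi_n\|_2\to\|Q\hat\phi\|_2$, and handles the $\zeta$-term by Fatou-type lower semicontinuity; the limiting inequality then contradicts the \emph{linear} spectral gap $\|\hat\phi\|_2^2\geq\frac{d+4}{d}\|Q\hat\phi\|_2^2$ for $\hat\phi\perp\mathcal H^l$.

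By contrast, you give a direct, quantitative argument: you rewrite the claim as $(p-1)\sum_{k\ge 2}\gamma_k\|\phi_k\|_2^2\ge (p-2)\int f(\phi)\,d\mu$, use the explicit eigenvalues of $Q$ to control the spectral side, and analyze the nonlinearity $f(a)=a^2-\zeta(a)(1-|1+a|)^2$ pointwise. Your low/high frequency dichotomy, together with the finite-dimensional norm equivalence $\|\phi^{\le K}\|_\infty\le C_K\|\phi^{\le K}\|_2$, replaces the compactness step entirely and produces an in-principle computable $\delta(d,\lambda)$. The paper's approach is softer and transfers verbatim to any setting where compactness of the operator and a linear spectral gap are known; your approach avoids compactness altogether but requires the more detailed pointwise control $|f'(a)|\lesssim\min(a^2,1)$ (which you correctly verified) and the case split. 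Both arguments ultimately rest on the same linear gap $\gamma_2>0$, but yours makes the reduction to it constructive rather than asymptotic.
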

    
Our proof of this proposition uses a compactness property of $Q$ from \cite[Corollary~2.1]{HANG2009}.

\begin{lemma}[Compactness properties of $Q$, \cite{HANG2009}]\label{lem:cpctQ}
    Let $1\leq r<\infty$ and $1\leq s<\frac{dr}{d-1}$. The operator $$Q:L^{r}(\mathbb S^{d-1})\to L^{s}(B^d)$$ is compact. In particular, the operator $Q:L^2(\mathbb S^{d-1})\to L^2(B^d)$ is compact.
 \end{lemma}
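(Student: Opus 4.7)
The plan is to approximate $Q$ in the operator-norm topology by a family of compact operators $Q_\rho$ obtained by restricting the range to an inner ball $\{|y|\leq\rho\}$, $\rho\in(0,1)$, and then to show $\|Q-Q_\rho\|_{L^r(\Sph^{d-1})\to L^s(B^d)}\to 0$ as $\rho\uparrow 1$. Since compact operators form a closed subspace of the bounded operators under operator-norm convergence, this yields compactness of $Q$. The second assertion is then the specialization $r=s=2$, which satisfies the hypothesis $s<dr/(d-1)=2d/(d-1)$ since $d\geq 3$.

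\textbf{Compactness of the truncations.} For the truncated operator $(Q_\rho u)(y)\coloneqq \1_{\{|y|\leq\rho\}}(y)(Qu)(y)$ the key observation is that on $\{|y|\leq\rho\}\times\Sph^{d-1}$ the kernel $Q(y,\omega)=(1-|y|^2)/|y-\omega|^d$ is bounded and Lipschitz in $y$ uniformly in $\omega$ (as $|y-\omega|\geq 1-\rho$). Given a bounded sequence $\{u_n\}\subset L^r(\Sph^{d-1})$, H\"older's inequality then delivers a uniform pointwise bound $|Qu_n(y)|\lesssim_\rho \|u_n\|_r$ and a uniform equicontinuity bound $|Qu_n(y_1)-Qu_n(y_2)|\lesssim_\rho |y_1-y_2|\,\|u_n\|_r$ on $\{|y|\leq\rho\}$. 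Arzel\`a--Ascoli extracts a uniformly convergent subsequence on this compact set, which in particular converges in $L^s(B^d)$.

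\textbf{Controlling the tail.} Pick any $s_1\in(s,dr/(d-1))$. H\"older's inequality on the thin shell $\{|y|>\rho\}$ gives
\[
\|Qu-Q_\rho u\|_s\leq \|Qu\|_{s_1}\,\nu(\{|y|>\rho\})^{1/s-1/s_1}\lesssim \|Qu\|_{s_1}\,(1-\rho^d)^{1/s-1/s_1},
\]
which vanishes as $\rho\uparrow 1$ uniformly in $u$ on bounded sets of $L^r$, provided $Q:L^r(\Sph^{d-1})\to L^{s_1}(B^d)$ is bounded. To get this boundedness I would interpolate. Fixing any $s_0<d/(d-1)$, Minkowski's integral inequality together with the observation that $\int_{B^d}(1-|y|^2)^{s_0}|y-\omega|^{-ds_0}\,\mathrm d\nu(y)$ is finite and independent of $\omega\in\Sph^{d-1}$ (the threshold $s_0<d/(d-1)$ being precisely what keeps this integral convergent) yields $Q:L^1(\Sph^{d-1})\to L^{s_0}(B^d)$ with norm $\lesssim 1$. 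The trivial bound $\|Qu\|_\infty\leq \|u\|_\infty$, coming from the fact that $Q(y,\cdot)\,\mathrm d\mu$ is a probability measure on $\Sph^{d-1}$, together with Riesz--Thorin interpolation then gives $Q:L^r\to L^{rs_0}$. Letting $s_0\uparrow d/(d-1)$ covers the full range $s_1<dr/(d-1)$.

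\textbf{Main obstacle.} The only delicate point is the borderline kernel integrability used in the interpolation step, namely checking that $\int_{B^d}Q(y,\omega)^{s_0}\,\mathrm d\nu(y)$ converges exactly when $s_0<d/(d-1)$. Passing to polar coordinates, this reduces to a one-dimensional radial integral in $1-|y|$ against the angular kernel-integral $\int_{\Sph^{d-1}}|y-\omega|^{-ds_0}\,\mathrm d\mu(\omega)$ (whose behaviour near $|y|=1$ is classical), and the scaling threshold $d/(d-1)$ matches the resulting exponent. Once this endpoint computation is in hand, the remainder of the proof is routine truncation and Arzel\`a--Ascoli.
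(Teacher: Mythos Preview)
Your argument is correct. Note, however, that the paper does not supply its own proof of this lemma: it is quoted directly from \cite[Corollary~2.1]{HANG2009} and used as a black box, so there is no in-paper proof to compare against.

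Your route---truncating to $\{|y|\leq\rho\}$, obtaining compactness of each $Q_\rho$ by Arzel\`a--Ascoli (the kernel being smooth on $\{|y|\leq\rho\}\times\Sph^{d-1}$), and then showing operator-norm convergence $Q_\rho\to Q$ via the subcritical boundedness $Q:L^r\to L^{s_1}$ for $s_1\in(s,dr/(d-1))$---is a clean and standard one. The interpolation step is set up correctly: Minkowski's inequality and the kernel integrability $\int_{B^d}Q(y,\omega)^{s_0}\,\mathrm d\nu(y)<\infty$ for $s_0<d/(d-1)$ give the $L^1\to L^{s_0}$ endpoint, and Riesz--Thorin against $L^\infty\to L^\infty$ (from $\int Q(y,\cdot)\,\mathrm d\mu=1$) yields $L^r\to L^{rs_0}$, covering all $s_1<dr/(d-1)$ as $s_0\uparrow d/(d-1)$. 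The threshold computation you flag as the ``main obstacle'' is indeed the crux; the local model near $y=\omega$ is $Q(y,\omega)\sim t/(t^2+|z|^2)^{d/2}$ with $t\sim 1-|y|$ and $z$ tangential, and the resulting integral $\int t^{d-1-(d-1)s_0}\,\mathrm dt$ converges precisely when $s_0<d/(d-1)$.
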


Next, we prove a linear spectral gap inequality for the operator $Q$.

 \begin{lemma}[Spectral gap inequality for $Q$]\label{lem:spec}
 Let $d\geq 3$. For any $\phi\in L^{2}(\mathbb S^{d-1})\cap (\mathcal H^l)^\perp$, we have
 \begin{equation}\label{eq:Qbdd}\|\phi\|_2^2\geq \frac{d+4}{d}\|Q\phi\|_{2}^2\,.\end{equation}
 \end{lemma}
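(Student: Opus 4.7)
The plan is to decompose $\phi$ into spherical harmonics and use the fact that the harmonic extension of a spherical harmonic of degree $k$ is an explicit homogeneous polynomial, so the operator $Q$ acts diagonally in this basis. This reduces the inequality to a comparison of sequences of numbers.

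Concretely, I would write
\[
\phi = \sum_{k\geq 0} Y_k,
\]
where $Y_k$ is the orthogonal projection of $\phi$ onto the space of spherical harmonics of degree $k$. A classical computation shows that the Poisson extension to the ball satisfies $(Q Y_k)(y) = |y|^k\, Y_k(y/|y|)$, since this is precisely the harmonic polynomial of degree $k$ whose boundary trace is $Y_k$. Combined with the mutual orthogonality of the $Y_k$ in $L^2(\mathbb S^{d-1})$ and the product structure $\mathrm d\nu = d\, r^{d-1}\,\mathrm dr\, \mathrm d\mu$, this gives
\[
\|\phi\|_2^2 = \sum_{k\geq 0} \|Y_k\|_2^2,
\qquad
\|Q\phi\|_2^2 = \int_0^1 \sum_{k\geq 0} r^{2k}\, \|Y_k\|_2^2\, d\, r^{d-1}\,\mathrm dr = \sum_{k\geq 0} \frac{d}{2k+d}\, \|Y_k\|_2^2.
\]

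Now the assumption $\phi \in (\mathcal H^l)^\perp$ means exactly $Y_0 = Y_1 = 0$, so only indices $k\geq 2$ survive. For such $k$ one has $\tfrac{d}{2k+d} \leq \tfrac{d}{d+4}$, with equality iff $k=2$. Substituting yields
\[
\|Q\phi\|_2^2 \leq \frac{d}{d+4}\, \|\phi\|_2^2,
\]
which is the claimed inequality. I do not foresee any real obstacle here: the proof is essentially a one-line computation once the spherical harmonic expansion and the Fubini-type splitting of the ball measure are in place. The only thing worth emphasizing is that the constant $\frac{d+4}{d}$ is sharp and is saturated by degree-$2$ spherical harmonics, which makes the orthogonality to $\mathcal H^l$ (i.e., to degrees $0$ and $1$) exactly the right assumption to get a gap above $1$.
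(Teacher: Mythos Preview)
Your proposal is correct and follows essentially the same approach as the paper: decompose $\phi$ into spherical harmonics, use that $Q$ maps a degree-$\ell$ spherical harmonic to $|y|^\ell Y_\ell(y/|y|)$, compute $\|Q\phi\|_2^2 = \sum_\ell \frac{d}{2\ell+d}\|Y_\ell\|_2^2$ via the product structure of $\mathrm d\nu$, and conclude using that the orthogonality to $\mathcal H^l$ kills the $\ell=0,1$ terms. The paper's proof is identical in substance, only with the spherical harmonics further indexed by $m$ within each degree.
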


 \begin{proof}
    For each $\ell\in\N_0$ let $(Y_{\ell,m})$ be an orthonormal basis of spherical harmonics of degree $\ell$, normalized with respect to the uniform probability measure $\mu$. We shall use the fact that
    \begin{equation*}
        (QY_{\ell,m})(y)=|y|^\ell Y_{\ell,m}\left(\frac{y}{|y|}\right)\,,\qquad y\in B^d\,.
    \end{equation*}
    This follows directly from the definition of spherical harmonics as restrictions to the sphere of homogeneous harmonic polynomials on $\R^d$; see also \cite{Stein1990}.  
 
    To prove \eqref{eq:Qbdd}, we decompose $\phi$ into spherical harmonics, writing $\phi=\sum_{\ell,m}\phi_{\ell,m}Y_{\ell,m}$. Then 
\begin{align*}
    \|Q\phi\|_{2}^2&=\int_{B^d}\left(\int_{\Sph^{d-1}} Q(y,\omega)\sum_{\ell,m}\phi_{\ell,m}Y_{\ell,m}(\omega)\,\mathrm d\mu(\omega)\right)^2\,\mathrm d\nu(y)\\
    &=d\int_0^1|y|^{d-1}\int_{\Sph^{d-1}}\sum_{\ell,m,\ell',m'}\phi_{\ell,m}\phi_{\ell',m'}|y|^{\ell}Y_{\ell,m}\left(\frac{y}{|y|}\right)|y|^{\ell'}Y_{\ell',m'}\left(\frac{y}{|y|}\right)\,\mathrm d\mu\left(\frac y{|y|}\right)\mathrm d|y|\\
    &=d\sum_{\ell,m}\int_0^1|y|^{d-1+2\ell}\,\mathrm d|y|\phi_{\ell,m}^2=\sum_{\ell,m}\frac{d}{2\ell+d}\phi_{\ell,m}^2\leq \frac{d}{d+4}\sum_{\ell,m}\phi_{\ell,m}^2=\frac{d}{d+4}\|\phi\|_2^2\,.
\end{align*} The penultimate step follows from $\phi_{0,m}=\phi_{1,m}=0$.
 \end{proof}

We are now in position to prove the non-linear spectral gap inequality in Proposition \ref{prop:anospec}.

\begin{proof}[Proof of Proposition \ref{prop:anospec}]
     By contradiction, assume that there is a non-trivial sequence $(\phi_n)\subset L^{p}(\mathbb S^{d-1})\cap (\mathcal H^l)^\perp$ with $\phi_n\to 0$ in $L^{p}(\mathbb S^{d-1})$ as $n\to\infty$ and
\begin{equation}\label{eq:contra}
       1+(p-2)\int_{\mathbb S^{d-1}} \zeta(\phi_n)\left(\frac{1-|1+\phi_n|}{\|\phi_n\|_2}\right)^2\, \mathrm d\mu < (p-1)\frac{ d+2+\lambda}{d}\left\|Q\frac{\phi_n}{\|\phi_n\|_2}\right\|^2_{2}.
   \end{equation} 
Then, we deduce $\phi_n\to 0$ in $L^{2}(\mathbb S^{d-1})$ from the convergence in $L^{p}(\mathbb S^{d-1})$ by H\"older's inequality. Note that (up to a subsequence) $\phi_n\to 0$ pointwise almost everywhere, and hence $\zeta(\phi_n)\to 1$ almost everywhere. We are going to use further that (up to a subsequence) $\hat\phi_n\coloneqq \phi_n/\|\phi_n\|_2 \rightharpoonup \hat\phi$ weakly in $L^2(\mathbb S^{d-1})$ for some $\hat \phi\in L^2(\mathbb S^{d-1})$. If we write $\psi_n\coloneqq \hat\phi_n-\hat \phi$, we can bound
\begin{align}\notag
\liminf_{n\to\infty}\int_{\mathbb S^{d-1}}\zeta(\phi_n)&\left(\frac{1-|1+\phi_n|}{\|\phi_n\|_2}\right)^2\,\mathrm d\mu\geq \liminf_{n\to\infty}\int_{\{|\phi_n|\leq 1\}}\zeta(\phi_n)\hat \phi_n^2\,\mathrm d\mu\\&\geq \liminf_{n\to\infty}\int_{\{|\phi_n|\leq 1\}}\zeta(\phi_n)(2\psi_n\hat \phi+\hat \phi^2)\,\mathrm d\mu\geq \|\hat\phi\|_2^2\,.\label{eq:wconv}
\end{align}
Let us provide some details concerning the proof of the last inequality in \eqref{eq:wconv}. To the term involving $\hat\phi^2$, we applied Fatou's lemma to obtain the lower bound $\|\hat \phi\|_2^2$.
To dismiss the term involving $2\psi_n\hat\phi$, we exploited that $\psi_n\rightharpoonup0$ weakly and
$\zeta(\phi_n)\mathbbm 1_{\{|\phi_n|\leq 1\}}\hat \phi\to \hat\phi$ strongly in $L^2(\mathbb S^{d-1})$. The latter follows by dominated convergence as
$\zeta(\phi_n)\mathbbm 1_{\{|\phi_n|\leq 1\}}\leq 1$ pointwise almost everywhere.
    
  Since $Q:L^2(\mathbb S^{d-1})\to L^{2}(B^d)$ is compact by Lemma \ref{lem:cpctQ}, we have $Q \hat \phi_n\to Q\hat\phi$ in $L^2(B^d)$. Inserting this and the lower bound in \eqref{eq:wconv} into \eqref{eq:contra}, we obtain in the limit $n\to\infty$ that
  $$
  1+ (p-2) \|\hat\phi\|_2^2 \leq (p-1) \frac{d+2+\lambda}{d} \|Q\hat\phi\|^2_{2} \,.
  $$
  Note that this implies, in particular, that $Q\hat\phi\neq 0$. Moreover, $\|\hat\phi_n\|_2=1$ implies $\|\hat\phi\|_2\leq 1$. Inserting this into the previous inequality we arrive at
  $$
  \|\hat\phi\|_2^2\leq \frac{d+2+\lambda}{d}\|Q\hat\phi\|_{2}^2\,.
  $$ 
    The orthogonality condition $\hat\phi_n\bot \mathcal H^l$ implies $\hat\phi \bot \mathcal H^l$. Therefore, recalling also $\lambda<2$ and $Q\hat\phi\neq 0$, the previous inequality contradicts the spectral gap inequality in Lemma \ref{lem:spec}, which concludes the proof.
 \end{proof}

%%%%%%%%%%%%%%%%%%%%%%%%%%%%%%%%%%

\subsection{Comparable distances and almost orthogonality}

In this subsection we show that functions that are sufficiently close to the set of optimizers can be M\"obius transformed in a way that preserves the closeness but ensures in addition some almost orthogonality conditions. Here, `almost' means up to terms of lower order compared to the $L^p(\mathbb S^{d-1})$- and the $L^2(\mathbb S^{d-1})$-distance. 

 \begin{proposition}[Approximate orthogonality]\label{prop:almorthog}
    Let $(u_n)\subset L^{p}(\Sph^{d-1})$ be a sequence of functions with \begin{equation*}
        \|u_n\|_{p}=1\qquad \text{and}\qquad \inf_\Psi\|(u_n)_\Psi-1\|_{p}\to 0
   \end{equation*} for  $n\to\infty$. Then there exists a sequence of M\"obius transformations $\Psi_n$ such that $$r_n\coloneqq (u_n)_{\Psi_n}-1$$ satisfies, for all $n$ large enough, 
   \begin{equation}
   \label{eq:normcomp}
       \|r_n\|_{2}=\inf_\Psi\|(u_n)_{\Psi}-1\|_{2}\,,\qquad \|r_n\|_{p}\lesssim \inf_\Psi\|(u_n)_{\Psi}-1\|_{p}\,,
   \end{equation} 
   and 
   \begin{equation}\label{eq:ortho}
       \bigg\vert\int_{\Sph^{d-1}} r_n\,\mathrm{d}\mu\bigg\vert+\sum_{i=1}^d\bigg\vert\int_{\Sph^{d-1}} \omega_i r_n\,\mathrm{d}\mu\bigg\vert\lesssim \|r_n\|_2^2+\|r_n\|_{p}^p\,, 
   \end{equation}
   where $\omega_i$, $i=1,\dots, d$, are the coordinate functions on the sphere.
\end{proposition}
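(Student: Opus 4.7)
The plan is to follow the Bianchi--Egnell--Figalli--Zhang template: choose $\Psi_n$ realising the $L^2$-projection of $1$ onto the orbit $\{(u_n)_\Psi\}$, and then read off \eqref{eq:ortho} from the corresponding Euler--Lagrange equation (for the $\omega_i$'s) together with the $L^p$-normalisation of $u_n$ (for the constant part).

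First I would handle existence of the $L^2$-minimiser and the comparability \eqref{eq:normcomp}. By conformal invariance of the $L^p$-norm I may replace $u_n$ by $(u_n)_{\tilde\Psi_n}$ for near-$L^p$-minimisers $\tilde\Psi_n$, reducing to $\epsilon_n:=\|u_n-1\|_p\to 0$. Writing $(u_n)_\Phi-1=((1)_\Phi-1)+(u_n-1)_\Phi$ and estimating $\|(u_n-1)_\Phi\|_2\leq\|(u_n-1)_\Phi\|_p=\epsilon_n$ (H\"older on the probability measure $\mu$, then conformal invariance of $L^p$), minimising sequences for $\Phi\mapsto\|(u_n)_\Phi-1\|_2^2$ stay in a compact part of the M\"obius group, because $\|(1)_\Phi-1\|_2$ stays bounded below whenever $\Phi$ degenerates (in that case $(1)_\Phi\rightharpoonup 0$ in $L^2$). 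Hence a minimiser $\Psi_n$ exists. Using local coordinates $\tau\in\R^d$ on the M\"obius group near the identity, one has $\|(1)_{\Psi_\tau}-1\|_p\asymp|\tau|\asymp\|(1)_{\Psi_\tau}-1\|_2$ for $|\tau|$ small; combining these scalings with the triangle inequality gives $|\tau_n|\lesssim\epsilon_n$ and therefore $\|r_n\|_p\leq\|(1)_{\Psi_n}-1\|_p+\|(u_n-1)_{\Psi_n}\|_p\lesssim\epsilon_n=\inf_\Psi\|(u_n)_\Psi-1\|_p$.

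With $\Psi_n$ in hand, the orthogonality to first spherical harmonics comes from vanishing of the first variation. Given a one-parameter subgroup $\phi_t$ with generator a conformal Killing field $v$ on $\Sph^{d-1}$ and infinitesimal Jacobian $\sigma:=\partial_t J_{\phi_t}|_{t=0}$, the cocycle identity $(u_n)_{\Psi_n\circ\phi_t}=((u_n)_{\Psi_n})_{\phi_t}$ yields
\[
\frac{d}{dt}\bigg|_{t=0}(1+r_n)_{\phi_t}=\tfrac{d-2}{2(d-1)}\,\sigma\,(1+r_n)+v\cdot\nabla r_n.
\]
The Euler--Lagrange condition at $\Psi_n$ then reads
\[
\tfrac{d-2}{2(d-1)}\int_{\Sph^{d-1}}\sigma\,r_n\,\mathrm d\mu = -\tfrac{d-2}{2(d-1)}\int_{\Sph^{d-1}}\sigma\,r_n^2\,\mathrm d\mu - \int_{\Sph^{d-1}}(v\cdot\nabla r_n)\,r_n\,\mathrm d\mu,
\]
and after rewriting the last integral as $-\tfrac12\int(\mathrm{div}\,v)\,r_n^2\,\mathrm d\mu$ via integration by parts, the right-hand side is $O(\|r_n\|_2^2)$. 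Letting $v$ range over a basis of conformal Killing fields modulo isometries, the associated $\sigma$'s span the space $\mathrm{span}\{\omega_1,\ldots,\omega_d\}$ of first spherical harmonics on $\Sph^{d-1}$, and inverting the resulting $d\times d$ linear system gives $\sum_{i=1}^d|\int\omega_i\,r_n\,\mathrm d\mu|\lesssim\|r_n\|_2^2$. For the remaining orthogonality to constants, conformal invariance gives $\|1+r_n\|_p=\|u_n\|_p=1$; combining this with the elementary pointwise bound $\bigl||1+a|^p-1-pa\bigr|\lesssim a^2+|a|^p$, valid for all $a\in\R$, integration yields $|\int r_n\,\mathrm d\mu|\lesssim\|r_n\|_2^2+\|r_n\|_p^p$, which closes \eqref{eq:ortho}.

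I expect the main obstacle to lie in the comparability $\|r_n\|_p\lesssim\inf_\Psi\|(u_n)_\Psi-1\|_p$. The M\"obius group is non-compact and its action does not preserve the $L^2$-norm, so the $L^2$- and $L^p$-projections onto the orbit of $u_n$ are not obviously close; the argument above succeeds only because, in the small-$L^p$-distance regime in which we are working, the orbit is locally parametrised by $\R^d$ and both norms on the tangent space at $1$ scale equivalently. By contrast, the Euler--Lagrange computation in Step 2 and the pointwise expansion in Step 3 are short direct calculations, transplanted from the Figalli--Zhang strategy \cite{Figalli2022} to the present non-local Poisson setting.
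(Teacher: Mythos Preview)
Your proposal is correct and follows essentially the same approach as the paper's proof: choose $\Psi_n$ as the $L^2$-minimiser over the M\"obius orbit, read off the orthogonality to the $\omega_i$ from the vanishing first variation (the paper carries this out via the explicit $\Psi_\eta$-parametrisation rather than abstract conformal Killing fields, but the computation is identical), and extract the orthogonality to constants from $\|1+r_n\|_p=1$ together with the pointwise bound $\bigl||1+a|^p-1-pa\bigr|\lesssim a^2+|a|^p$. For the comparability $\|r_n\|_p\lesssim\inf_\Psi\|(u_n)_\Psi-1\|_p$ the paper defers to \cite[Lemma~9]{Frank2024B}, whereas your argument via pre-composition with a near-$L^p$-minimiser and the local equivalence $\|(1)_{\Psi_\tau}-1\|_p\asymp|\tau|\asymp\|(1)_{\Psi_\tau}-1\|_2$ is a self-contained variant of the same idea.
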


\begin{proof}[Proof of Proposition \ref{prop:almorthog}]
    \emph{Step 1.} We recall the explicit family $\Psi_\eta$, $\eta\in B^d$, of M\"obius transformations of $\Sph^{d-1}$ given in \eqref{eq:psieta}. For $u\in L^p(\Sph^{d-1})$ we consider the functional 
    \begin{equation}
        \label{eq:mapdist}
        B^d \ni\eta\mapsto \|(u)_{\Psi_\eta}-1\|_{2}^2 \,.
    \end{equation}
    We are interested in minimizing this functional, and we claim that if $\inf_\Psi\|(u)_\Psi -1\|_2<1$, then the functional attains its minimum in $B^d$.

    To prove this, we first note that the functional in \eqref{eq:mapdist} is continuous. Indeed, we can approximate $u$ by smooth functions in the $L^2(\mathbb S^{d-1})$-norm, uniformly in $\eta$ on a small ball that is compactly contained in $B^d$. By a standard $\epsilon/3$-argument, the continuity of \eqref{eq:mapdist} for given $u$ follows from the one for smooth functions.
    
    As a step towards proving that the infimum of \eqref{eq:mapdist} is attained, we note that, if $\eta\rightarrow \omega\in \Sph^{d-1}$, the Jacobian $J_{\Psi_\eta}$ vanishes on $\Sph^{d-1}\backslash\{\omega\}$. As a consequence of Fatou's lemma, we then obtain
    \begin{equation}
    \label{eq:infimummobiusbdry}
        \liminf_{|\eta|\rightarrow 1}\|(u)_{\Psi_\eta}-1\|_2^2\geq \|1\|_2^2 =1 \,. 
    \end{equation}

    Next, we note that
    \begin{equation}
        \label{eq:infimummobius}
            \inf_\Psi\|(u)_\Psi -1\|_2 = \inf_\eta \|(u)_{\Psi_\eta} - 1 \|_2 \,.
    \end{equation}
    To show this, we recall that for any M\"obius transformation $\Psi$ of $\Sph^{d-1}$, there are $\eta\in B^d$ and $A\in O(d)$ such that $\Psi =A\Psi_\eta$; see Corollary \ref{cor:mobiusclass}. It will be slightly more convenient for us to write this as $\Psi = \Psi_{A\eta}(A\,\cdot\,)$. Then a change of variables $\omega\mapsto A^{-1}\omega$ shows that for any $r>0$ ($r=2$ is relevant for us)
    $$
    \int_{\Sph^{d-1}} |(u)_\Psi - 1|^r\,\mathrm d\mu = \int_{\Sph^{d-1}} |(u)_{\Psi_{A\eta}} - 1|^r\,\mathrm d\mu \,. 
    $$
    This proves \eqref{eq:infimummobius}.

    We can now prove the claim made at the beginning of this step. As a consequence of \eqref{eq:infimummobiusbdry}, \eqref{eq:infimummobius}, and the assumption $\inf_\Psi\|(u)_\Psi -1\|_2<1$, we have $\liminf_{|\eta|\rightarrow 1}\|(u)_{\Psi_\eta}-1\|_2^2 > \inf_\Psi\|(u)_\Psi -1\|_2$. By continuity of \eqref{eq:mapdist} and local compactness of $B^d$, the infimum is attained, as claimed. 
   
    \medskip

    \emph{Step 2.} Consider now a sequence $(u_n)\subset L^p(\Sph^{d-1})$ with $\|u_n\|_p=1$ and $\inf_\Psi \|(u_n)_\Psi -1\|_p\to 0$. We consider $n$ so large that the latter infimum is smaller than 1. We apply Step 1 with $u$ replaced by $u_n$ and obtain a sequence $(\eta_n)\subset B^d$ such that $\eta\mapsto \|(u_n)_{\Psi_\eta}-1\|_2^2$ attains its minumum at $\eta_n$. We set $\Psi_n\coloneqq \Psi_{\eta_n}$.
    
    By construction, $r_n\coloneqq(u_n)_{\Psi_{\eta_n}}-1$ satisfies the equality in \eqref{eq:normcomp}. The inequality in \eqref{eq:normcomp} follows by the same argument as in \cite[Lemma 9]{Frank2024B}. Just replace the $W^{1,2}$-norm by the $L^2$-norm, $W^{1,4}$ by $L^{p}$, and \cite[Lemma 6]{Frank2024B} by the invariance of the $L^{p}$-norm under M\"obius transformations. Moreover, \cite[Lemma 7]{Frank2024B} can be used as well, but with dimension $\tilde d= d-1$ and $q=4\frac{\tilde d-1}{\tilde d-4}<\frac{4\tilde d}{\tilde d-4}$ instead of $p=2$.
    
    We turn to the proof of the almost orthogonality conditions in \eqref{eq:ortho}. To bound the first summand in \eqref{eq:ortho}, we use the elementary inequality
    \begin{equation*}
        ||1+\sigma|^p-1-p\sigma|\lesssim \sigma^2+|\sigma|^p
    \end{equation*}
    for $\sigma\in\R$. Integrating this expression with $\sigma=r_n(\omega)$ yields that 
    \begin{equation*}
        p\bigg|\int_{\Sph^{d-1}}r_n\,\mathrm{d}\mu\bigg|=\bigg|\int_{\Sph^{d-1}} \left( |(u_n)_{\Psi_{\eta_n}}|^p-1-pr_n\right) \mathrm{d}\mu\bigg|\lesssim\|r_n\|_2^2+\|r_n\|_p^p\,.
    \end{equation*}

    For the remaining terms in \eqref{eq:ortho}, we employ that the $L^2(\mathbb S^{d-1})$-distance is attained. Assume without loss of generality that the map \eqref{eq:mapdist} has a minimum at $\eta=0$. Then differentiating with respect to $\eta$ at $0$ we have 
    \begin{equation*}
       \nabla_\eta\Big\vert_{\eta=0}\int_{\mathbb S^{d-1}} r_n(u_n)_{\Psi_\eta}\,\mathrm d\mu=0\,.
    \end{equation*}
    First, assume $r_n$ to be differentiable. Then, from the calculations in \cite[Lemma 10]{Frank2024B}, 
    \begin{equation*}
        \partial_{\eta_i}(u_n)_{\Psi_{\eta}}\vert_{\eta=0}=(d-2)\omega_i(1+r_n)-2(e_i-\omega_i\omega)\cdot \nabla r_n\,,\qquad i=1,\dots,d\,.
    \end{equation*}
    Since $$2\int_{\mathbb S^{d-1}} 
    r_n(e_i-\omega_i\omega)\cdot \nabla r_n\,\mathrm d\mu(\omega)=\int_{\mathbb S^{d-1}} 
    (e_i-\omega_i\omega)\cdot \nabla (r_n^2)\,\mathrm d\mu(\omega)= (d-1)\int_{\mathbb S^{d-1}} 
    \omega_i r_n^2\,\mathrm d\mu(\omega)\,,$$
    we obtain
    \begin{equation*}
       \left|\int_{\mathbb S^{d-1}} r_n\omega_i\,\mathrm d\mu\right|=\frac{1}{d-2}\left|\int_{\mathbb S^{d-1}} \omega_ir_n^2\,\mathrm d\mu\right|\lesssim \|r_n\|^2_2\,.
    \end{equation*}
    By approximation, this holds for general $r_n$.
\end{proof}

%%%%%%%%%%%%%%%%%%%%%%%%%%%%%%%%%%%%%%%%%%%%%%%%

\subsection{Local analysis}

In this subsection we combine the ingredients of the two previous subsections and finish the proof of the local bound, Proposition \ref{prop:localstability}.

Before doing this, we show that, for all quantities that are of interest for us, the almost orthogonality conditions in Proposition \ref{prop:almorthog} can be replaced by genuine orthogonality conditions up to an acceptable error. To this end, we denote the orthogonal projection onto $\mathcal H^l$ by $\Pi^l$. Recall also that $\zeta$ was defined in Lemma \ref{lem:elemineq}.

\begin{lemma}[Negligible spherical harmonics of lower degree]\label{eq:sphdecomp}
   For $r_n$ as in Proposition \ref{prop:almorthog}, we split $r_n= r_n^{l}+r_n^h$ with $r_n^l\coloneqq\Pi^l r_n$. Then, as $n\rightarrow \infty$,
    \begin{align}
        \|r_n\|_{2}^2&=\|r_n^h\|_2^2+o\left(\|r_n\|_2^2+\|r_n\|_{p}^p\right),\notag
        \\
        \|r_n\|_{p}^p&=\|r_n^h\|_{p}^p+o\left(\|r_n\|_2^2+\|r_n\|_{p}^p\right),\label{eq:split2}\\
        \|Qr_n\|_2^2&=\|Qr_n^h\|_2^2+o\left(\|r_n\|_2^2+\|r_n\|_{p}^p\right),\notag
    \end{align}
  and 
    \begin{align}\notag
\int_{\Sph^{d-1}}\zeta(r_n)(1-|1+r_n|)^2\,\mathrm{d}\mu=\int_{\Sph^{d-1}}\zeta(r_n^h)(1-|1+r_n^h|)^2\,\mathrm{d}\mu\\+o\left(\|r_n\|_2^2+\|r_n\|_{p}^p\right). \notag
    \end{align}
\end{lemma}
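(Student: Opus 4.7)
The common thread in all four estimates is the uniform bound $\|r_n^l\|_\infty \lesssim \epsilon_n$, where I set $\epsilon_n \coloneqq \|r_n\|_2^2 + \|r_n\|_p^p$. Indeed, an $L^2(\Sph^{d-1})$-orthonormal basis of $\mathcal H^l$ is given by $1$ and $\sqrt d\,\omega_i$, so
$$
r_n^l = \Big(\int_{\Sph^{d-1}} r_n\,\mathrm d\mu\Big) + d\sum_{i=1}^d\Big(\int_{\Sph^{d-1}} r_n\omega_i\,\mathrm d\mu\Big)\omega_i,
$$
and the approximate orthogonality \eqref{eq:ortho} from Proposition \ref{prop:almorthog} bounds each of these coefficients by a dimensional constant times $\epsilon_n$. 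Since $|1|=|\omega_i|\leq 1$ on $\Sph^{d-1}$, this yields $\|r_n^l\|_\infty\lesssim \epsilon_n$, and consequently $\|r_n^l\|_r\lesssim\epsilon_n$ for every $r\in[1,\infty]$.

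The first two identities are now immediate. By Pythagoras in $L^2(\Sph^{d-1})$, $\|r_n\|_2^2 = \|r_n^h\|_2^2 + \|r_n^l\|_2^2$ with $\|r_n^l\|_2^2\lesssim \epsilon_n^2 = o(\epsilon_n)$. For $\|Qr_n\|_2^2$, I recall from the proof of Lemma \ref{lem:spec} that $QY_{\ell,m}(y)=|y|^\ell Y_{\ell,m}(y/|y|)$; by separation of variables this means $Q$ maps spherical harmonics of distinct degrees to $L^2(B^d)$-orthogonal subspaces, so $\|Qr_n\|_2^2 = \|Qr_n^h\|_2^2 + \|Qr_n^l\|_2^2$ and $\|Qr_n^l\|_2\leq\|r_n^l\|_2\lesssim \epsilon_n$ gives $o(\epsilon_n)$. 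For the $L^p$-identity I use the standard pointwise inequality $\big||a+b|^p-|a|^p\big|\leq C_p(|a|^{p-1}|b|+|b|^p)$ with $a=r_n^h$ and $b=r_n^l$; integrating and applying H\"older,
$$
\Big|\|r_n\|_p^p-\|r_n^h\|_p^p\Big|\lesssim \|r_n^h\|_p^{p-1}\|r_n^l\|_p+\|r_n^l\|_p^p\lesssim \|r_n\|_p^{p-1}\epsilon_n+\epsilon_n^p,
$$
which is $o(\epsilon_n)$ since both $\|r_n\|_p\to 0$ and $\epsilon_n\to 0$.

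The hardest piece is the $\zeta$-integral. The plan is to set $g(a)\coloneqq \zeta(a)(1-|1+a|)^2$ and to prove that $g\in C^1(\R)$ with the sharpened derivative bound $|g'(a)|\lesssim|a|$. The function $g$ has four explicit branches: $g(a)=a^2$ for $a\geq 0$; $g(a)=(1+a)^{p-1}a^2$ for $a\in[-1,0]$; $g(a)=(-1-a)^{p-1}(2+a)^2$ for $a\in[-2,-1]$; and $g(a)=(2+a)^2$ for $a\leq -2$. A direct differentiation on each branch yields the bound $|g'(a)|\lesssim|a|$. Checking $C^1$-regularity at the joining points $0$, $-2$, $-1$ is the delicate step: at $0$ and $-2$ both one-sided derivatives vanish by inspection, while at $a=-1$ the one-sided derivatives vanish precisely because $p>2$, which holds for all $d\geq 3$ since $p=2(d-1)/(d-2)$. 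The mean value theorem then gives the pointwise comparison
$$
|g(r_n^h+r_n^l)-g(r_n^h)|\lesssim |r_n^l|\bigl(|r_n^h|+|r_n^l|\bigr),
$$
and integration plus Cauchy--Schwarz yield a bound $\lesssim \epsilon_n\|r_n\|_2+\epsilon_n^2$. Both terms are $o(\epsilon_n)$: clearly $\epsilon_n^2=o(\epsilon_n)$, and $\epsilon_n\|r_n\|_2\leq \|r_n\|_2^3+\|r_n\|_p^p\|r_n\|_2$, where the first summand is $o(\|r_n\|_2^2)$ and the second is bounded by $\|r_n\|_p^{p+1}=o(\|r_n\|_p^p)$ using $\|r_n\|_2\leq\|r_n\|_p$ on a probability space. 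The main obstacle in the whole proof is precisely this branch-by-branch analysis of $g$ and $g'$, together with the check that the $C^1$-gluing at $a=-1$ requires $p>2$.
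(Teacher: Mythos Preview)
Your proof is correct, and for the first three identities it matches the paper's argument essentially line by line: the explicit formula for $\Pi^l$, the bound $\|r_n^l\|_\infty\lesssim\epsilon_n$, Pythagoras for $\|\cdot\|_2$, orthogonality of $Q$ on spherical-harmonic blocks, and the pointwise $p$-th power inequality for $\|\cdot\|_p$.

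The genuine difference is in the $\zeta$-integral. The paper does not recognize $g(a)=\zeta(a)(1-|1+a|)^2$ as a $C^1$ function; instead it writes the difference as $(1)+(2)$, bounds $(1)$ using $\zeta\leq 1$ and the triangle inequality, and then splits $(2)$ into four regions $E_n^{(1)},\dots,E_n^{(4)}$ according to whether $|1+r_n|$ and $|1+r_n^h|$ exceed $1$, handling each region by a separate ad hoc estimate. Your approach---checking that $g\in C^1(\R)$ with $|g'(a)|\lesssim|a|$, then applying the mean value theorem---is considerably cleaner and shorter, and it isolates exactly where $p>2$ enters (the $C^1$-matching at $a=-1$, where $(1+a)^{p-2}\to 0$ is needed). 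The paper's region-splitting, on the other hand, never invokes differentiability and would survive with less regularity of $\zeta$, but at the cost of a longer and more opaque case analysis. One small point: your step $\|r_n^h\|_p^{p-1}\lesssim\|r_n\|_p^{p-1}$ is correct but deserves a word---it follows from $\|r_n^l\|_p\lesssim\epsilon_n\lesssim\|r_n\|_p$ on the probability space once $\|r_n\|_p\leq 1$.
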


\begin{proof}
    We first note that for $f\in L^1(\Sph^{d-1})$ we have
    $$
    (\Pi^l f)(\omega) = \int_{\Sph^{d-1}} f(\omega')\,\mathrm d\mu(\omega') 
    + d \sum_{i=1}^d \int_{\Sph^{d-1}} \omega_i' f(\omega')\,\mathrm d\mu(\omega') \,\omega_i \,.
    $$

    To see the first statement in \eqref{eq:split2}, we use $\|r_n\|_2^2=\|r_n^l\|_2^2+\|r_n^h\|_2^2$ together with the almost-orthogonality conditions \eqref{eq:ortho} and obtain
  \begin{align*}
        \|r_n^l\|_2^2 &= \bigg(\int_{\Sph^{d-1}}r_n\,\mathrm{d}\mu \bigg)^2+ d \sum_{i=1}^d \bigg(\int_{\Sph^{d-1}}r_n\omega_i\,\mathrm{d}\mu \bigg)^2\\&\lesssim\left(\|r_n\|_2^2+\|r_n\|_p^p\right)^2.
    \end{align*} 
    Similarly, recalling the action of the harmonic extension operator $Q$ on spherical harmonics in the proof of Lemma \ref{lem:spec}, we obtain again from \eqref{eq:ortho} that
    \begin{align*}
        \|Q r_n^l\|_{2}^2 & = \bigg(\int_{\Sph^{d-1}}r_n\,\mathrm{d}\mu\bigg)^2+\frac{d^2}{d+2}\sum_{i=1}^d\bigg(\int_{\Sph^{d-1}}r_n\omega_i\,\mathrm{d}\mu\bigg)^2\\&\lesssim\left(\|r_n\|_2^2+\|r_n\|_p^p\right)^2. 
    \end{align*}

    To prove the remaining statements, we use the fact, which follows immediately from the above formulas for $\Pi^l f$, that
    $$\sup_{\omega\in\Sph^{d-1}} \left|\Pi^lf(\omega)\right| \leq \left| \int_{\Sph^{d-1}} f(\omega')\,\mathrm d\mathrm\mu(\omega')\right| + d \left( \sum_{i=1}^d \bigg(\int_{\Sph^{d-1}}f(\omega')\omega'_i\,\mathrm{d}\mu(\omega')\bigg)^2 \right)^{1/2}.
    $$
    As a consequence of this and the almost orthogonality conditions \eqref{eq:ortho}, we have 
    \begin{equation}\label{eq:normeqortho}
        \|r_n^l\|_{\tilde p}\lesssim\|r_n\|_2^2+\|r_n\|_p^p
        \qquad \text{for any}\ \tilde p>1\,.
    \end{equation}
    
    The second statement in \eqref{eq:split2} now follows from 
    \begin{align*}
        \int_{\Sph^{d-1}}||r_n|^{p}-|r_n^h|^p|\,\mathrm{d}\mu&\leq p\int_{\mathbb S^{d-1}}\max\{|r_n|,|r_n^h|\}^{p-1}|r^l_n|\,\mathrm{d}\mu
        \\&\lesssim \|r_n^l\|_p(\|r_n\|_p+\|r_n^l\|_p)^{p-1}
    \end{align*}
    together with \eqref{eq:normeqortho}.
    
    Finally, we deal with the term involving $\zeta$ by estimating
    \begin{align*}
        &\bigg|\int_{\Sph^{d-1}}\left(\zeta(r_n)(1-|1+r_n|)^2-\zeta(r_n^h)(1-|1+r_n^h|)^2\right)\,\mathrm{d}\mu\bigg|\\
        \leq&\bigg|\int_{\Sph^{d-1}}\zeta(r_n)\left((1-|1+r_n|)^2-(1-|1+r_n^h|)^2\right)\,\mathrm{d}\mu\bigg|\\
        &+\bigg|\int_{\Sph^{d-1}}\left(\zeta(r_n)-\zeta(r_n^h)\right)(1-|1+r_n^h|)^2\,\mathrm{d}\mu\bigg|\eqqcolon (1)+(2)\,.
    \end{align*}
    Note that $\zeta(v)=\mathbbm 1_{|1+v|> 1}+\mathbbm 1_{|1+v|\leq 1} |1+v|^{p-1}\leq 1$ and recall that $p\geq 2$. Thus, using the H\"older and the triangle inequality, the first term gives
    \begin{align*}
        (1)&\leq \int_{\Sph^{d-1}}|(1-|1+r_n|)^2-(1-|1+r_n^h|)^2|\,\mathrm{d}\mu
        \\
        &\leq \left( \|1-|1+r_n|\|_p + \|1-|1+r_n^h|\|_p \right) \||1+r_n|-|1+r_n^h|\|_{\frac{p}{p-1}}\leq (\|r_n\|_p+\|r_n^h\|_p)\|r_n^l\|_{\frac{p}{p-1}}\,.
    \end{align*} 
    Via the bound \eqref{eq:normeqortho} with $\tilde p = p$ and $\tilde p=p/(p-1)$, it follows that 
    \begin{equation*}
        (1)\lesssim  \left( \|r_n\|_p + \|r_n\|_2^2 + \|r_n\|_p^p \right) \left(\|r_n\|_2^2+\|r_n\|_p^p\right).
    \end{equation*}
    
    For the second term, we split $\Sph^{d-1}= \bigcup_{i=1}^4 E_n^{(i)}$ with
    \begin{align*}
        E_n^{(1)} & \coloneqq \{ |1+r_n|>1 \,,\ |1+r_n^h|>1 \} \,,
        &E_n^{(2)} &\coloneqq \{ |1+r_n|>1\geq |1+r_n^h| \} \,,\\
        E_n^{(3)} &\coloneqq \{ |1+r_n^h|>1\geq |1+r_n| \} \,,
        &E_n^{(4)} &\coloneqq \{ |1+r_n|\leq 1 \,,\ |1+r_n^h|\leq 1\} \,,
    \end{align*}
    and correspondingly  
   $$
    (2) \leq \sum_{i=1}^4 (2)_i 
    \qquad\text{with}\qquad
    (2)_i \coloneqq  \bigg|\int_{E_n^{(i)}} \left(\zeta(r_n)-\zeta(r_n^h)\right)(1-|1+r_n^h|)^2\mathrm{d}\mu\bigg| \,.
    $$  
    First, we note that, by the definition of $\zeta$, we have $(2)_1=0$. Second, in $E_n^{(2)}$ we have $|r_n^l|\geq |1+r_n| - |1+r_n^h|> 1- |1+r_n^h|$ and therefore 
   \begin{align*}
       (2)_2 &\leq \int_{E_n^{(2)}} (1-|1+r_n^h|^{p-1}) (1-|1+r_n^h|)^2 \,\mathrm{d}\mu\\
       &\leq \|(1-|1+r_n^h|)\1_{E_n^{(2)}}\|_2^2 \leq \|r_n^l\|^2_2 \lesssim \left(\|r_n\|^2_2+\|r_n\|_p^p\right)^2.
    \end{align*}
    Third, since $(1-|1+r_n|^{p-1})/(1-|1+r_n|)\leq p-1$ in $E_n^{(3)}$, we have
    $$
    (2)_3 \leq \int_{E_n^{(3)}} (1-|1+r_n|^{p-1}) (|1+r_n^h|-1)^2 \,\mathrm{d}\mu
    \leq (p-1) \int_{E_n^{(3)}} (1-|1+r_n|)(|1+r_n^h|-1)^2 \,\mathrm{d}\mu\,.
    $$
    In $E_n^{(3)}$ we have $|r_n^l|\geq |1+r_n^h|-|1+r_n|>1-|1+r_n|$ and therefore
   \begin{align*}
       (2)_3 &\leq (p-1)\int_{E_n^{(3)}} |r_n^l| (|1+r_n^h|-1)^2 \,\mathrm{d}\mu
       \leq (p-1)\int_{\Sph^{d-1}}|r^l_n||r_n^h|^2 \,\mathrm{d}\mu\\
       &\leq (p-1)\|r_n^l\|_\infty\|r_n^h\|_2^2 \lesssim \|r_n\|_2^2\left(\|r_n\|_2^2+\|r_n\|_p^p\right).
   \end{align*}
    Finally, in $E_n^{(4)}$ we use 
    \begin{equation*}
       ||1+r_n|^{p-1}-|1+r_n^h|^{p-1}|\leq (p-1)\max\{|1+r_n|,|1+r_n^h|\}^{p-2}|r_n^l|\leq (p-1)|r^l_n|
    \end{equation*}
    to find 
    \begin{align*}
       (2)_4 &\leq \int_{E_n^{(4)}}\left| |1+r_n|^{p-1}-|1+r_n^h|^{p-1} \right| (1-|1+r_n^h|)^2 \, \mathrm{d}\mu\\
       &\leq (p-1) \|r_n^l\|_\infty\|r_n^h\|_2^2\lesssim \|r_n\|_2^2\left(\|r_n\|_2^2+\|r_n\|_p^p\right).
    \end{align*}
    This concludes the proof.
\end{proof}

Finally, we present the proof of Proposition \ref{prop:localstability} by combining the elementary estimates from Lemma \ref{lem:elemineq} and the spectral gap-type inequality from Proposition \ref{prop:anospec}. 

\begin{proof}[Proof of Proposition \ref{prop:localstability}]

We start with a sequence $(u_n)\subset L^p(\Sph^{d-1})$ satisfying $\|u_n\|_p=1$ for all $n$ and $\inf_\Psi \|(u_n)_\Psi -1\|_p\to 0$ as $n\to\infty$. Proposition \ref{prop:almorthog} gives us a sequence $(\Psi_n)$ of M\"obius transformations such that $r_n\coloneqq (u_n)_{\Psi_n}-1$ satisfies the almost orthogonality conditions \eqref{eq:ortho}. We note that the second item in \eqref{eq:normcomp} gives that $\|r_n\|_p \to 0$. 

By conformal invariance, we have
$$
1 - \|Qu_n\|_{q}^p = \|1+ r_n\|_{p}^{p}-\|Q(1+ r_n)\|_{q}^{p} \,,
$$
and our task is to find a lower bound on the right side.

Applying Lemma \ref{lem:elemineq}, for any $\kappa>0$ there are $C_\kappa, c_\kappa>0$ such that for $r\in L^p(\mathbb S^{d-1})$ we have
\begin{align}
    \|Q(1+ r)\|_{q}^{q}\leq 1&+q\int_{B^d}Qr\,\mathrm d \nu+\left(\frac{q(q-1)}{2}+\kappa\right)\|Qr\|_{{2}}^2 +C_{\kappa}\|Qr\|^{q}_{q} \notag
\intertext{and}\notag
    \|1+r\|_{p}^p\geq1&+p\int_{\Sph^{d-1}} r\,\mathrm d \mu+\frac{1-\kappa}{2}p\int_{\mathbb S^{d-1}} \left( r^2 + (p-2) \zeta(r)(1-|1+r|)^2\right) \mathrm d\mu \\ &+c_\kappa\|r\|_{p}^p\,. \label{eq:FZbound}
\end{align}
Moreover, using the elementary bound $(1+a)^{p/q}\leq 1+a p/q$ for $a\geq -1$, it follows that
\begin{align}
    \label{eq:Nbound}
    \|Q(1+ r)\|_{q}^{p}\leq 1&+p\int_{B^d}Qr\,\mathrm d \nu+\left(\frac{p(q-1)}{2}+\frac pq \kappa\right)\|Qr\|_{{2}}^2 +\frac{p}{q}C_{\kappa}\|Qr\|^{q}_{q} \,.
\end{align}

Applying \eqref{eq:FZbound} and \eqref{eq:Nbound} to $r=r_n$ and noting that $\int_{B^d} Qr_n \,\mathrm d\nu= \int_{\Sph^{d-1}} r_n \,\mathrm d\mu$, we obtain
\begin{align*}
   \|1+ r_n\|_{p}^{p}-\|Q(1+ r_n)\|_{q}^{p} & \geq  \frac p2 (1-\kappa) \int_{\mathbb S^{d-1}} \left( r_n^2 + (p-2) \zeta(r_n)(1-|1+r_n|)^2\right) \mathrm d\mu \\
    & \quad -\left(\frac{p(q-1)}{2}+\frac pq \kappa\right)\|Qr_n\|_{2}^2  +c_\kappa\|r_n\|_{p}^p- \frac{d-1}{d}C_\kappa\|Qr_n\|^{q}_{q} \,.
\end{align*}

Turning the almost orthogonality conditions into genuine orthogonality conditions through Lemma~\ref{eq:sphdecomp} and applying the non-linear spectral gap inequality from Proposition \ref{prop:anospec}, we obtain for any fixed $\lambda<2$ that
\begin{align*}
    \|Qr_n\|_{2}^2 & = \|Qr_n^h\|_{2}^2 + o(\|r_n\|_2^2 + \|r_n\|_p^p) \\
    & \leq \frac1{p-1}\,\frac{d}{d+2+\lambda} \int_{\mathbb S^{d-1}} \left( (r_n^h)^2 + (p-2) \zeta(r_n^h)(1-|1+r_n^h|)^2\right) \mathrm d\mu\\
    & \quad + o(\|r_n\|_2^2 + \|r_n\|_p^p) \\
    & = \frac1{p-1}\,\frac{d}{d+2+\lambda} \int_{\mathbb S^{d-1}} \left( r_n^2 + (p-2) \zeta(r_n)(1-|1+r_n|)^2\right) \mathrm d\mu \\
    & \quad + o(\|r_n\|_2^2 + \|r_n\|_p^p) \,.
\end{align*}
In addition, we bound
$$
\| Qr_n\|_{q}^q \lesssim \|r_n\|_p^q = o(\|r_n\|_p^p)\,,
$$
and then we use the positive term $(c_\kappa/2) \|r_n\|_p^p$ to absorb all the errors $o(\|r_n\|_p^p)$. In this way, we arrive at the lower bound
\begin{align*}
    \|1+ r_n\|_{p}^{p}-\|Q(1+ r_n)\|_{q}^{p} & \geq \frac p2 \left( (1-\kappa) - \left(q-1+\frac 2q \kappa\right) \frac{1}{p-1} \frac{d}{d+2+\lambda} \right) \\
    & \quad \quad \times \int_{\mathbb S^{d-1}} \left( r_n^2 + (p-2) \zeta(r_n)(1-|1+r_n|)^2\right) \mathrm d\mu \\
    & \quad + (c_\kappa/2)\|r_n\|_p^p + o(\|r_n\|_2^2) \,.
\end{align*}
Noting that $\frac{q-1}{p-1} = \frac{d+2}{d}$, we see that for any $\lambda<2$, we can choose $\kappa=\kappa(d,\lambda)>0$ small enough such that $$
(1-\kappa) -  \left(q-1+\frac 2q \kappa\right)\frac{1}{p-1} \frac{d}{d+2+\lambda}
= 1-\frac{d+2}{d+2+\lambda}-\kappa \left(1+\frac{(d-2)^2}{d(d+2+\lambda)}\right)>0\,.
$$
(Half of) this positive quantity can be used to absorb the error $o(\|r_n\|_2^2)$. Hence, we finally arrive at the lower bound
\begin{equation*}
    \|1+ r_n\|_{p}^{p}-\|Q(1+ r_n)\|_{q}^{p}
    \gtrsim \|r_n\|_2^2+\|r_n\|_{p}^p\,.
\end{equation*} 
Since 
$$
\|r_n\|_2^2+\|r_n\|_{p}^p \geq \inf_\Psi \left( \|(u_n)_\Psi - 1\|_2^2 + \|(u_n)_\Psi - 1\|_p^p \right),
$$
we obtain the assertion of Proposition \ref{prop:localstability}.
\end{proof}

%%%%%%%%%%%%%%%%%%%%%%%%%%%%%%%%%%%%%%%%%%%%%%%%%
%%%%%%%%%%%%%%%%%%%%%%%%%%%%%%%%%%%%%%%%%%%%%%%%%

 \section{Stability for the dual HWY-inequality}
\label{sec:3}

Now we turn to the dual setting. Our goal in this section is to prove Theorems \ref{thm:HWYstabilitydual} and \ref{thm:HWYstabilitydualsphere}, and therefore a quantitative dual HWY-inequality with the sharp power 2 of the distance to the set of optimizers.

\subsection{Strategy of the proof}
As in the previous section, to prove stability for the dual HWY-inequality, we follow the two-step strategy that consists of a global-to-local reduction and a local bound. Those two steps are the content of the following two propositions.

\begin{proposition}[Global-to-local reduction]
  \label{prop:qualdual} Let $(v_n)\subset L^{q'}(B^d)$ be a sequence of functions with \begin{equation*}
        \|v_n\|_{q'}\to 1\qquad \text{and}\qquad \|Sv_n\|_{p'}\to 1
   \end{equation*} 
   as $n\to\infty$. Then \begin{equation*}
      \inf_{\Phi,\lambda\in\{\pm 1\}} \|\lambda [v_n]_\Phi-1\|_{q'}\to 0 \qquad \text{as} \ n\to\infty\,.
   \end{equation*} 
\end{proposition}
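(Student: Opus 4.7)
The plan is to dualize back to the primal inequality via $S=Q^*$, apply Proposition~\ref{prop:qual}, and finish with a Radon--Riesz argument in $L^{q'}(B^d)$ (which is uniformly convex since $1<q'=\tfrac{2d}{d+2}<2$). Concretely, I would first extract a near-optimal test function on the sphere-side: for each $n$, pick $u_n\in L^p(\Sph^{d-1})$ with $\|u_n\|_p=1$ and $\int_{\Sph^{d-1}} u_n(Sv_n)\,\mathrm d\mu \geq \|Sv_n\|_{p'} - \tfrac1n$. Using the identity $\int_{\Sph^{d-1}} u_n(Sv_n)\,\mathrm d\mu = \int_{B^d}(Qu_n)v_n\,\mathrm d\nu$, H\"older's inequality, and \eqref{eq:HWYsphere},
\[
\|Sv_n\|_{p'} - \tfrac1n \leq \int_{B^d}(Qu_n) v_n\,\mathrm d\nu \leq \|Qu_n\|_q\|v_n\|_{q'} \leq \|u_n\|_p\|v_n\|_{q'} = \|v_n\|_{q'} \,,
\]
so the hypotheses force both $\|Qu_n\|_q\to 1$ and $\int_{B^d}(Qu_n)v_n\,\mathrm d\nu\to 1$. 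Proposition~\ref{prop:qual} then supplies M\"obius transformations $\Psi_n$ of $\Sph^{d-1}$ and signs $\mu_n\in\{\pm1\}$ with $\mu_n(u_n)_{\Psi_n}\to 1$ in $L^p(\Sph^{d-1})$.

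Next I would transfer to the ball via conformal invariance. Let $\Phi_n$ denote the M\"obius extension of $\Psi_n$ to $B^d$. The conformal covariance of $Q$ (Appendix~\ref{sec:confinv}) intertwines $(\cdot)_{\Psi}$ on $L^p(\Sph^{d-1})$ with the $L^q$-isometric action $f\mapsto J_\Phi^{1/q}(f\circ\Phi)$ on $L^q(B^d)$, whose $L^2$-dual is precisely $[\cdot]_\Phi$ on $L^{q'}(B^d)$; consequently the pairing $\int_{B^d}(Qu)v\,\mathrm d\nu$ is invariant under the simultaneous substitutions $u\mapsto(u)_{\Psi}$, $v\mapsto[v]_{\Phi}$. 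Setting $\tilde u_n\coloneqq \mu_n(u_n)_{\Psi_n}$ and $w_n\coloneqq \mu_n[v_n]_{\Phi_n}$, I then have $\|w_n\|_{q'}=\|v_n\|_{q'}\to 1$, $\tilde u_n\to 1$ in $L^p(\Sph^{d-1})$, and $\int_{B^d}(Q\tilde u_n)w_n\,\mathrm d\nu = \int_{B^d}(Qu_n)v_n\,\mathrm d\nu\to 1$. Boundedness of $Q\colon L^p(\Sph^{d-1})\to L^q(B^d)$ gives $Q\tilde u_n\to Q(1)=1$ in $L^q(B^d)$, so H\"older yields $\int_{B^d}w_n\,\mathrm d\nu\to 1$.

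Finally, the bounded sequence $(w_n)\subset L^{q'}(B^d)$ admits a subsequence with a weak limit $w$ satisfying $\|w\|_{q'}\leq 1$ and $\int_{B^d} w\,\mathrm d\nu = 1$; since $\|1\|_q=1$ (as $\nu$ is a probability measure), equality in H\"older's inequality forces $w\equiv 1$. Together with $\|w_n\|_{q'}\to 1 = \|w\|_{q'}$, uniform convexity of $L^{q'}$ upgrades the weak convergence to strong convergence $w_n\to 1$ in $L^{q'}(B^d)$. This means $\|\mu_n[v_n]_{\Phi_n}-1\|_{q'}\to 0$ along the subsequence, which already bounds the infimum appearing in the conclusion from above; a standard contradiction argument promotes this to the full sequence. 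The delicate point in the whole argument is the conformal bookkeeping in the second paragraph: one must carefully verify that the sphere-side symmetries supplied by Proposition~\ref{prop:qual} transfer, with the correct Jacobian exponent, into precisely the ball-side action $[\cdot]_\Phi$ appearing in the conclusion, so that the sign $\mu_n$ and the M\"obius factor can be absorbed into the infimum over $\lambda\in\{\pm1\}$ and $\Phi$. Once this is in place, the rest is a standard duality plus uniform convexity package.
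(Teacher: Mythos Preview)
Your proof is correct and follows the same overall strategy as the paper: dualize to produce an optimizing sequence for the primal inequality, invoke Proposition~\ref{prop:qual}, transfer the M\"obius transformations via the Poincar\'e extension and Lemma~\ref{lem:poincareball}, and close with a Radon--Riesz argument in the uniformly convex space $L^{q'}(B^d)$.

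Your execution is in fact a bit more economical than the paper's. The paper takes $u_n$ to be the explicit duality image $|Sv_n|^{p'-2}Sv_n/\|Sv_n\|_{p'}^{p'-2}$; this yields an exact algebraic relation between $\tilde u_n$ and $S\tilde v_n$, which the paper then exploits to first show $\lambda_n S\tilde v_n\to 1$ in $L^{p'}(\Sph^{d-1})$ (one Radon--Riesz step), and only afterwards to show $\lambda_n\tilde v_n\to 1$ in $L^{q'}(B^d)$ (a second Radon--Riesz step, this time appealing to the classification of optimizers in \eqref{eq:HWYspheredual} to identify the weak limit). By contrast, your choice of a near-optimal $u_n$ forgoes the explicit relation but lets you pass directly from $Q\tilde u_n\to 1$ in $L^q$ to $\int_{B^d} w_n\,\mathrm d\nu\to 1$, whence the weak limit is pinned down by the equality case in H\"older's inequality alone---no optimizer classification needed, and only one Radon--Riesz step. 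The trade-off is that the paper's explicit duality map makes the argument more canonical (and the authors note it generalizes to abstract uniformly convex Banach spaces), whereas yours is shorter and uses less structure.
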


\begin{proposition}[Local bound]\label{prop:localstabilitydual}
    There exists a constant $c_d > 0$ such that, for any sequence $(v_n) \subset L^{q'}(B^d)$ satisfying $\|v_n\|_{q'}=1$ and $ \inf_{\Phi}\| [v_n]_\Phi-1\|_{q'}\to 0$ as $n\rightarrow \infty$, we have
   \begin{equation*}
\liminf_{n\rightarrow\infty}\frac{1-\|Sv_n\|^{q'}_{p'}}{\inf_{\Phi} \|[v_n]_\Phi-1\|_{q'}^{2}}\geq c_d\,. 
    \end{equation*}
\end{proposition}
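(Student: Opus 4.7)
The plan is to adapt the two-part Bianchi--Egnell strategy of Section~\ref{sec:2} to the dual setting, in parallel with the proof of Proposition~\ref{prop:localstability}. The new analytic difficulty is that both relevant Lebesgue exponents $q'=\frac{2d}{d+2}$ and $p'=\frac{2(d-1)}{d}$ lie strictly in $(1,2)$, so that second-order expansions of $L^{q'}$- and $L^{p'}$-norms require two-regime elementary inequalities rather than the global expansions of Lemma~\ref{lem:elemineq}.

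First I would establish a dual analogue of Proposition~\ref{prop:almorthog}: produce M\"obius transformations $\Phi_n$ of $B^d$ such that $r_n:=[v_n]_{\Phi_n}-1$ satisfies $\|r_n\|_{q'}\lesssim \inf_\Phi\|[v_n]_\Phi-1\|_{q'}$ together with the almost orthogonality
$$
\bigg|\int_{B^d} r_n\,\mathrm d\nu\bigg|+\sum_{i=1}^d \bigg|\int_{B^d} y_i\, r_n\,\mathrm d\nu\bigg|\lesssim \|r_n\|_2^2+\|r_n\|_{q'}^{q'}.
$$
As in Proposition~\ref{prop:almorthog}, the idea is to minimize $\eta\mapsto \|[v_n]_{\Phi_\eta}-1\|_{L^2(B^d)}^2$ over $\eta\in B^d$ (using continuity, a boundary concentration estimate, and the classification of Corollary~\ref{cor:mobiusclass}) and then differentiate at the minimizer. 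The second ingredient is a nonlinear spectral gap for $S$. The spherical harmonic decomposition of $g\in L^2(B^d)$ yields $(S(Y_{\ell,m}f))(\omega)= d\, Y_{\ell,m}(\omega)\int_0^1 r^{d-1+\ell} f(r)\,\mathrm d r$ for radial $f$, and Cauchy--Schwarz gives $\|Sg\|_{L^2(\Sph^{d-1})}^2\leq \frac{d}{d+2\ell}\|g\|_{L^2(B^d)}^2$ on the degree-$\ell$ sector. Hence on the orthogonal complement of the span of $1$ and $y_1,\ldots,y_d$ one has $\|Sg\|_2^2\leq \frac{d}{d+4}\|g\|_2^2$. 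Compactness of $S:L^2(B^d)\to L^2(\Sph^{d-1})$ (the adjoint of Lemma~\ref{lem:cpctQ}) then upgrades this to the nonlinear form
$$
\|Sr\|_{L^2(\Sph^{d-1})}^2 \leq \tfrac{d}{d+2+\lambda}\|r\|_{L^2(B^d)}^2 + o\bigl(\|r\|_2^2+\|r\|_{q'}^{q'}\bigr)
$$
for any $\lambda\in(0,2)$ and any $r$ small in $L^{q'}$ satisfying the above almost orthogonality conditions.

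Next I would replace Lemma~\ref{lem:elemineq} by two-regime elementary inequalities tailored to exponents in $(1,2)$: for every $\kappa>0$ there should exist $c_\kappa,C_\kappa>0$ such that, for all $a,b\in\R$,
$$
|1+a|^{q'}\geq 1+q'a+\tfrac{q'(q'-1)}{2}(1-\kappa)\,a^2\mathbbm{1}_{|a|\leq 1}+c_\kappa|a|^{q'}\mathbbm{1}_{|a|>1},\qquad |1+b|^{p'}\leq 1+p'b+\tfrac{p'(p'-1)}{2}(1+\kappa)\,b^2+C_\kappa|b|^{p'}.
$$
Since $q'/p'<1$, the concavity bound $x^{q'/p'}\leq 1+(q'/p')(x-1)$ reduces the lower bound for $1-\|Sv_n\|_{p'}^{q'}$ to one for $1-\|Sv_n\|_{p'}^{p'}$. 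Applying the upper inequality at $b=Sr_n$ and exploiting the identity $\int_{\Sph^{d-1}} Sr_n\,\mathrm d\mu=\int_{B^d} r_n\,\mathrm d\nu$ (a consequence of $\int_{\Sph^{d-1}} Q(y,\omega)\,d\mu(\omega)=1$) cancels the linear terms against the contribution of the lower inequality applied to the constraint $\|1+r_n\|_{q'}^{q'}=1$. The surviving quadratic main term reads
$$
\tfrac{q'(q'-1)}{2}(1-\kappa)\int_{|r_n|\leq 1} r_n^2\,\mathrm d\nu - \tfrac{q'(p'-1)}{2}(1+\kappa)\|Sr_n\|_2^2,
$$
and the algebraic identity $(q'-1)/(p'-1)=d/(d+2)$ combined with the strict spectral gap $d/(d+4)<d/(d+2)$ from Step~2 provides enough room (for $\kappa$ small enough and $n$ large) to make this a positive multiple of $\|r_n\|_2^2$. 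Since $\|r_n\|_{q'}\leq \|r_n\|_2$ by H\"older on the probability space $(B^d,\nu)$, the desired bound follows.

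The hardest step will be the absorption of the $L^{p'}$-remainder $C_\kappa\|Sr_n\|_{p'}^{p'}$: the HWY inequality only gives $\|Sr_n\|_{p'}^{p'}\leq \|r_n\|_{q'}^{p'}$, but since $p'<2$ this is not automatically $o(\|r_n\|_{q'}^2)$. This is precisely the dual manifestation of the Figalli--Zhang difficulty for exponents less than $2$ mentioned in the introduction, and its resolution relies on the $c_\kappa|a|^{q'}\mathbbm{1}_{|a|>1}$ contribution in the lower elementary inequality, which supplies the extra $L^{q'}$ mass on the large-value set $\{|r_n|>1\}$ needed to dominate this remainder consistently with the $L^2$-control coming from the spectral gap.
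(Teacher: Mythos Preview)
Your overall strategy mirrors the paper's, but the implementation has a genuine gap rooted in one issue: you work throughout with $L^2$ quantities on $B^d$, yet the only a priori information on $r_n$ is membership in $L^{q'}(B^d)$ with $q'=\tfrac{2d}{d+2}<2$. Thus $\|r_n\|_{L^2(B^d)}$, $\|Sr_n\|_{L^2(\Sph^{d-1})}$, and the functional $\eta\mapsto\|[v_n]_{\Phi_\eta}-1\|_{L^2(B^d)}^2$ you propose to minimize may all be infinite. Concretely, your ``surviving quadratic main term'' compares $\int_{\{|r_n|\le 1\}} r_n^2$ with $\|Sr_n\|_2^2$, but the linear spectral gap you derive bounds $\|Sr_n\|_2^2$ by $\tfrac{d}{d+4}\|r_n\|_2^2$ --- the \emph{full} $L^2$ norm --- and the missing piece $\int_{\{|r_n|>1\}} r_n^2$ is not controlled by the term $c_\kappa\int_{\{|r_n|>1\}}|r_n|^{q'}$ you retain (on that set $|r_n|^{q'}\le r_n^2$). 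For the same reason your proposed absorption of $C_\kappa\|Sr_n\|_{p'}^{p'}$ does not go through: large values of $Sr_n$ on $\Sph^{d-1}$ arise from averaging and are not tied to the set $\{|r_n|>1\}\subset B^d$.

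The paper resolves this by replacing all $L^2$ quantities with Orlicz-type ones that remain finite on $L^{q'}$ and $L^{p'}$. On the ball side, Lemma~\ref{lem:elemineq2}(a) produces the damped second-order term $r^2+(q'-2)\zeta(r)(1-|1+r|)^2$, comparable to $r^2(1+|r|)^{q'-2}$; on the sphere side, Lemma~\ref{lem:elemineq2}(b) generates \emph{no} separate $|Sr|^{p'}$ remainder but instead the single quantity $\frac{(1+C|Sr|)^{p'}}{1+(Sr)^2}(Sr)^2$. The nonlinear spectral gap (Proposition~\ref{prop:anospecV}) compares these two damped quadratics directly, and its proof hinges on the Orlicz-type compactness Lemma~\ref{lem:cpctresult}, whose crucial step is Jensen's inequality for the convex function $t\mapsto|t|^{2/q'}(1+\varepsilon|t|)^{(q'-2)/q'}$ applied to the averaging operator $S$ --- this is the new analytic ingredient you are missing. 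Finally, the orthogonality conditions are obtained not by minimizing an $L^2$ distance but by \emph{maximizing} the always-finite $L^1$-type functional $\eta\mapsto\int_{B^d}[v_n]_{\Phi_\eta}\,\mathrm d\nu$ (Proposition~\ref{prop:dualorthogmin}); orthogonality to constants is then enforced afterwards by dividing by the mean.
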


Given these two propositions, Theorem \ref{thm:HWYstabilitydualsphere} follows in the same way as Theorem \ref{thm:HWYstabilitysphere}.

\begin{proof}[Proof of Theorem \ref{thm:HWYstabilitydualsphere}]
	By contradiction, assume that there is a sequence $(v_n)\subset L^{q'}(B^d)$ with
	\begin{equation}
		\label{eq:thmproofass2}
		\frac{ 1-\|Sv_n\|^{q'}_{p'}/\| v_n \|^{q'}_{q'}}{\inf_{\lambda,\Phi}   \| \lambda\,[v_n]_{\Phi}-1\|_{q'}^{2}} \to 0
	\end{equation} 
    as $n\to\infty$. As the quotient is $0$-homogeneous, we may normalize the sequence $(v_n)$ by $\| v_n \|_{q'} = 1$ for all $n$. Since
	$$ 
    \inf_{\lambda,\Phi} \| \lambda\,[v_n]_\Phi - 1\|_{q'}^{2}\leq \|1 \|_{q'}^{2}\,,
	$$  
	we deduce from \eqref{eq:thmproofass2} that $\|Sv_n\|_{p'}\to 1$ as $n\to\infty$. Hence, Proposition \ref{prop:qualdual} implies that $\inf_{\Phi,\lambda\in\{\pm1\}} \| \lambda [v_n]_\Phi - 1\|_{q'}\to 0$ as $n\to\infty$. Passing to a subsequence and replacing $v_n$ by $-v_n$ if necessary, we may assume that $\inf_{\Phi} \| [v_n]_\Phi - 1\|_{q'}\to 0$ as $n\to\infty$. As
    $$
    \inf_{\lambda,\Phi}   \| \lambda\,[v_n]_{\Phi}-1\|_{q'}^{2} \leq \inf_{\Phi}   \| [v_n]_{\Phi}-1\|_{q'}^{2} \,,
    $$
    an application of Proposition \ref{prop:localstabilitydual} leads to a contradiction with \eqref{eq:thmproofass2}.
\end{proof}

Theorem \ref{thm:HWYstabilitydual} can be deduced from Theorem \ref{thm:HWYstabilitydualsphere} in the same way as Theorem \ref{thm:HWYstability} was deduced from Theorem \ref{thm:HWYstabilitysphere}. We omit the details. The remainder of this section is devoted to the proof of Propositions \ref{prop:qualdual} and \ref{prop:localstabilitydual}.

%%%%%%%%%%%%%%%%%%%%%%%%%%%%%%%%%%%%%%%%%%

\subsection{Global-to-local reduction via duality}

In this subsection we prove Proposition \ref{prop:qualdual}. Our strategy is to deduce the statement from Proposition \ref{prop:qual} via duality.

\begin{proof}[Proof of Proposition \ref{prop:qualdual}]
    Let $(v_n)\subset L^{q'}(B^d)$ satisfy, as $n\to\infty$, 
    \begin{equation*}
     \|v_n\|_{q'}\rightarrow 1 \qquad \text{and}\qquad\|Sv_n\|_{p'}\rightarrow 1 \,.
    \end{equation*}
    We claim that the sequence $(u_n)\subset L^p(\mathbb S^{d-1})$ defined by 
    \begin{equation*}
    u_n \coloneqq  \frac{|Sv_n|^{p’-2} Sv_n}{\|Sv_n\|_{{p’}}^{p’-2}}
    \end{equation*}
    satisfies, as $n\to\infty$, 
    \begin{equation}
        \label{eq:dualopt}
                \|u_n\|_p= \|Sv_n\|_{p'} \to 1 \qquad \text{and}\qquad  \|Qu_n\|_{q}\rightarrow 1 \,.
    \end{equation}
    The first relation is clear. To prove the second relation, we note that, on the one hand,
    \begin{equation*}
        \int_{\mathbb S^{d-1}} u_n(Sv_n)\,\mathrm d \mu=\|Sv_n\|_{p'}^2 \rightarrow 1\,,
    \end{equation*}
    while on the other hand, using H\"older's inequality and the HWY-inequality,
    \begin{equation*}
        \bigg\vert\int_{\mathbb S^{d-1}} u_n(Sv_n)\,\mathrm d \mu\bigg\vert=\bigg \vert \int_{B^d} (Qu_n)v_n\,\mathrm d \nu\bigg\vert\leq \|Qu_n\|_{q}\|v_n\|_{q'}\leq \|u_n\|_p \|v_n\|_{q'} \to 1\,.
    \end{equation*}
    Thus, we find $\|Qu_n\|_{q}\rightarrow 1$, as claimed.

    It follows from \eqref{eq:dualopt} via Proposition \ref{prop:qual} that 
    \begin{equation*}
        \inf_{\Psi,\lambda\in\{\pm1\}} \|\lambda (u_n)_{\Psi}-1\|_p\rightarrow 0\,.
    \end{equation*}
    Consequently, there is a sequence $(\Psi_n)$ of M\"obius transformations of $\Sph^{d-1}$ and a sequence $(\lambda_n)\subset\{\pm 1\}$ of signs such that $\tilde u_n\coloneqq  (u_n)_{\Psi_n}$ satisfies
    \begin{equation}
        \label{eq:dualoptproof0}
            \|\tilde u_n\|_p \to 1 \,,\qquad
        \| Q \tilde u_n\|_{q} \to 1
        \,,\qquad\text{and}\qquad
        \| \lambda_n \tilde u_n - 1 \|_p \to 0 \,.
    \end{equation}
    Indeed, the first two relations follow from \eqref{eq:dualopt} and conformal equivalence.

    Let $\Phi_n\coloneqq \tilde\Psi_n$ be the Poincar\'e extension of $\Psi_n$ as discussed in Appendix \ref{sec:confinv}. Then, by Lemma~\ref{lem:poincareball}, we have for all $u\in L^p(\Sph^{d-1})$,
    $$
    \int_{\Sph^{d-1}} \! J_{\Psi_n}^\frac1{p'} \, ((Sv_n)\circ\Psi_n) u\, \mathrm d\mu = \int_{B^d} v_n Q (u)_{\Psi_n^{-1}}\,\mathrm d\nu = \int_{B^d} v_n J_{\Phi_n^{-1}}^\frac1q \, (Qu)\circ\Phi_n^{-1} \,\mathrm d\nu = \int_{\Sph^{d-1}} \!(S[v_n]_{\Phi_n}) u\,\mathrm d\mu \,,
    $$
    and therefore
    \begin{equation}
        \label{eq:dualoptproof1a}
            J_{\Psi_n}^\frac1{p'} \, (Sv_n)\circ\Psi_n = S[v_n]_{\Phi_n} \,.
    \end{equation}

    We define
    $$
    \tilde v_n\coloneqq [v_n]_{\Phi_n}
    $$
    and note that, by conformal invariance and the corresponding properties of $v_n$,
    $$
    \|\tilde v_n\|_{q'}= \|v_n\|_{q'} \to 1
    \qquad\text{and}\qquad
    \| S\tilde v_n\|_{p'} = \| Sv_n\|_{p'} \to 1 \,.
    $$
    Moreover, by conformal invariance, the definition of $u_n$, and \eqref{eq:dualoptproof1a}, we have
    $$
    \tilde u_n = \left( \frac{|Sv_n|^{p'-2}Sv_n}{\|Sv_n\|_{p'}^{p'-2}} \right)_{\Psi_n} = \frac{|S\tilde v_n|^{p'-2}S\tilde v_n}{\|Sv_n\|_{p'}^{p'-2}} = \frac{|S\tilde v_n|^{p'-2}S\tilde v_n}{\|S\tilde v_n\|_{p'}^{p'-2}} \,.
    $$
    Using $\|\tilde u_n\|_p = \|u_n\|_p = \|Sv_n\|_{p'} = \|S\tilde v_n\|_{p'}$, this relation can be inverted to yield
    $$
    S\tilde v_n = \frac{|\tilde u_n|^{p-2} \tilde u_n}{\|\tilde u_n\|_p^{p-2}} \,.
    $$
    
    We now claim that
    \begin{equation}
        \label{eq:dualoptproof2}
        \lambda_n S\tilde v_n \to 1
        \qquad\text{in}\ L^{p'}(\Sph^{d-1}) \,.
    \end{equation}
    To prove this, we first note that, since $\|S\tilde v_n\|_{p'}\to 1$, we have, along a subsequence $\lambda_n S\tilde v_n\rightharpoonup z$ weakly in $L^{p'}(\Sph^{d-1})$ for some $z\in L^{p'}(\Sph^{d-1})$. The first and third relation in \eqref{eq:dualoptproof0} imply that
    $$
    \int_{\Sph^{d-1}} z\,\mathrm d\mu \leftarrow  \int_{\Sph^{d-1}} (\lambda_n\tilde u_n) (\lambda_nS\tilde v_n) \,\mathrm d\mu = \| \lambda_n\tilde u_n\|_p^2 = \| \tilde u_n\|_p^2 \to 1 \,,
    $$
    so $\int_{\Sph^{d-1}} z\,\mathrm d\mu =1$. This, together with
    $$
    \left| \int_{\Sph^{d-1}} z\,\mathrm d\mu \right| \leq \|1\|_p \|z\|_{p'} \leq 1
    $$
    and the characterization of cases of equality in H\"older's inequality implies that $z=1$. By uniqueness of the limit, we deduce that the whole sequence $(\lambda_n S\tilde v_n)$, and not only a subsequence, converges weakly to $1$. Since $\|S\tilde v_n\|_{p'} \to 1 = \|1\|_{p'}$, it follows from \cite[Proposition 3.32]{brezisbook} that the convergence is, in fact, strong, as claimed in \eqref{eq:dualoptproof2}.

    It is now easy to finish the proof of the proposition. Since $\|\tilde v_n\|_{q'}\to 1$, we have, along a subsequence, $\lambda_n \tilde v_n\rightharpoonup w$ weakly in $L^{q'}(B^d)$ for some $w\in L^{q'}(B^d)$. By \eqref{eq:HWYspheredual}, it follows that $\lambda_n S\tilde v_n \rightharpoonup Sw$ weakly in $L^{p'}(\Sph^{d-1})$ and therefore, by \eqref{eq:dualoptproof2}, that $Sw = 1$. In particular, $w\not\equiv 0$. Meanwhile, $\|\tilde v_n\|_{q'}\to 1$ implies that $\|w\|_{q'}\leq 1$. Thus,
    $$
    \frac{\|Sw\|_{p'}}{\|w\|_{q'}} = \frac{1}{\|w\|_{q'}} \geq 1 \,.
    $$
    Since the left side is $\leq 1$ by the dual HWY-inequality, we deduce that $\|w\|_{q'}=1$ and that $w$ is an optimizer for the dual HWY-inequality. By the explicit characterization of its optimizers (which, by duality, follows from the explicit characterization of the optimizers of the primal HWY-inequality) and the fact that $Sw=1$, we deduce that $w=1$. By uniqueness of the limit, we deduce that the whole sequence $(\lambda_n \tilde v_n)$, and not only a subsequence, converges weakly to $1$ in $L^{q'}(B^d)$. Since $\|\tilde v_n\|_{L^{q'}(B^d)} \to 1 = \|1\|_{L^{q'}(B^d)}$, it follows again from \cite[Proposition 3.32]{brezisbook} that the convergence is, in fact, strong. Thus, we have shown that
    $$
    \inf_{\Phi,\lambda\in\{\pm 1\}} \|\lambda [v_n]_{\Phi} - 1 \|_{L^{q'}(B^d)} \leq \|\lambda_n \tilde v_n - 1 \|_{L^{q'}(B^d)} \to 0 \,,
    $$
    as claimed.
\end{proof}

We emphasize that the preceding argument is of abstract nature and applicable to general operators $Q:X\to Y$ and their duals $S=Q':Y'\to X'$ between Banach spaces. The crucial assumption is uniform convexity of the spaces and their duals. In this setting $u_n\in X''=X$ is defined as the duality mapping applied to $Sv_n\in X'$. We recall that the duality map from $X'$ to $X''=X$ is well defined when $X$ uniformly convex (and consequently reflexive); see, for instance, \cite[Problem~13]{brezisbook}. Uniform convexity also ensures that if a sequence converges weakly with converging norms, then it converges strongly; see \cite[Proposition 3.32]{brezisbook}.

%%%%%%%%%%%%%%%%%%%%%%%%%%%

\subsection{Preliminaries on the expansion}

The remainder of this section is devoted to the proof of the local bound in Proposition \ref{prop:localstabilitydual}. In the dual setting, just like in the primal setting, we rely on the use of the elementary inequalities from \cite{Figalli2022}, which tailored to our needs can be stated as follows; see \cite[Lemmas 2.1 and 2.4]{Figalli2022}.

\begin{lemma}[Elementary inequalities, \cite{Figalli2022}]\label{lem:elemineq2}

Let $\kappa>0$.
\begin{enumerate}    
    \item[(a)] There is a constant $c_\kappa>0$ such that
for any $a\in \R$ we have the lower bound
    $$|1+a|^{q'}\geq 1+q'a+q'\frac{1-\kappa}{2}\left(a^2+(q'-2)\zeta(a)(1-|1+a|)^2\right)+c_\kappa\min\{|a|^{q'},a^2\}\,,$$
    where
    $$\zeta(a)\coloneqq\begin{cases}
       \frac{|1+a|}{(2-q')|1+a|+(q'-1)} & \quad\text{if} \ a\in [-2,0]^c\,,\\\qquad \quad 1 & \quad\text{if}\ a\in [-2,0]\,.
    \end{cases}$$
    
    \item[(b)] There is a constant $C_\kappa>0$ such that
for any $a\in \R$ we have the upper bound
    $$|1+a|^{p'}\leq 1+{p'}a+\left(\frac{p'(p'-1)}{2}+\kappa\right)\frac{(1+C_\kappa|a|)^{p'-2}}{1+a^2}a^2\,.$$
\end{enumerate}
\end{lemma}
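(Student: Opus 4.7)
Both parts of Lemma~\ref{lem:elemineq2} are one-variable calculus inequalities in $a\in\R$, cited from \cite[Lemmas 2.1 and 2.4]{Figalli2022}. I would verify each by setting $\Phi(a)$ to be the signed difference between the two sides (so that we want $\Phi\ge 0$) and checking the sign in three regimes: near $a=0$, on a bounded middle range, and asymptotically as $|a|\to\infty$.

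\textbf{Near $a=0$.} For (a), observe that $\zeta(0)=1$ and that $(1-|1+a|)^2=a^2$ in a neighbourhood of $0$, so the right-hand side expands as $1+q'a+\frac{q'(q'-1)(1-\kappa)}{2}a^2+c_\kappa a^2+o(a^2)$, whereas $|1+a|^{q'}=1+q'a+\frac{q'(q'-1)}{2}a^2+O(|a|^3)$. The difference at second order is $\frac{q'(q'-1)\kappa}{2}a^2-c_\kappa a^2$, which is non-negative provided $c_\kappa\le\frac{q'(q'-1)\kappa}{2}$. By Taylor's theorem with remainder, this ensures $\Phi\ge 0$ on some $[-\delta,\delta]$. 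Part (b) is analogous: $G(0)=G'(0)=0$ and, using $(1+C_\kappa|a|)^{p'-2}/(1+a^2)=1+O(|a|)$ near $0$, one gets $G''(0)=2\kappa>0$, so the $\kappa$-term produces the needed gap at second order.

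\textbf{Middle and asymptotic regimes.} On a bounded interval bounded away from $0$, I would argue by compactness: the $\kappa=c_\kappa=0$ versions of both inequalities are strict (except possibly at limiting values of $a$ where the specific form of $\zeta$, respectively of $C_\kappa$, is calibrated to give equality to leading order), and then taking $\kappa$, $c_\kappa$ small perturbs this strict inequality by a controlled amount. As $|a|\to\infty$: in (a) for $a\to+\infty$ one has $\zeta(a)\to 1/(2-q')$, so $a^2+(q'-2)\zeta(a)(1-|1+a|)^2\to 0$, and the leading behavior $|1+a|^{q'}\gtrsim|a|^{q'}$ absorbs the term $c_\kappa|a|^{q'}$ as soon as $c_\kappa<1$; the case $a\to-\infty$ is symmetric once one tracks the sign of $1+a$. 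In (b), the factor $(1+C_\kappa|a|)^{p'-2}$ is chosen so that the right-hand side has the correct asymptotic growth matching $|1+a|^{p'}$; this pins down the dependence $C_\kappa=C_\kappa(\kappa)$.

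\textbf{Main obstacle.} The delicate point in (a) is the non-smoothness of $|1+a|^{q'}$ at $a=-1$, where the left-hand side vanishes; Taylor expansion at $a=0$ cannot see this. The device that resolves it is the quantity $(1-|1+a|)^2$, which respects the reflection $a\leftrightarrow -2-a$ leaving $|1+a|$ invariant, together with the piecewise definition of $\zeta$ (constant $1$ on $[-2,0]$, and a specific rational function on the complement) calibrated so that the coefficient of the second-order correction matches the true local behavior of $t\mapsto t^{q'}$ at $t=|1+a|$. Verifying that this calibration is tight (so that the gap between the two sides is exactly controlled by $\min\{|a|^{q'},a^2\}$) is the technical heart of the matter and is precisely what is established in \cite[Lemmas 2.1 and 2.4]{Figalli2022}; in our setting we can simply invoke it.
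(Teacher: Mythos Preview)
The paper does not prove this lemma at all; it simply cites \cite[Lemmas 2.1 and 2.4]{Figalli2022}, which is exactly what you do in your final paragraph. Your additional sketch of the three-regime analysis is reasonable and more than the paper provides, but unnecessary here since both you and the paper ultimately treat the result as a black-box citation.
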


Owing to the exponent $q'$ being smaller than $2$, the key distinction from the previous section lies in the structure of the second-order term. This complicates both the compactness argument and the derivation of spectral gap inequalities; see Subsection \ref{subsec:specgap2}. Remarkably, however, we are able to adapt the approach of \cite{Figalli2022} -- despite its explicit reliance on local operators -- to our setting involving the non-local dual Poisson operator $S$.

%%%%%%%%%%%%%%%%%%%%%%%%%%%%%%

\subsection{Compactness of Orlicz type} 

To prove a non-linear spectral gap estimate that is compatible with the expansions from Lemma \ref{lem:elemineq2} we require a rather intricate compactness result.

\begin{lemma}[A compactness result for non-linear $L^2$-spaces]
    \label{lem:cpctresult} Let $(\phi_n)$ be a sequence in $L^{q'}(B^{d})$ and let $(\varepsilon_n)\subset\R_+$ with
    $\varepsilon_n\to 0$. If \begin{equation}\label{eq:bddassump}
        \int_{B^{d}}\frac{\phi_n^2}{(1+\varepsilon_n|\phi_n|)^{2-{q'}}}\,\mathrm d\nu\leq 1\,,
    \end{equation}
    then, along a subsequence, $\phi_n$ converges weakly in $L^{q'}(B^d)$ to a function $\phi\in L^{{q'}}(B^{d})$ with $S\phi\in L^2(\mathbb S^{d-1})$, and for any constant $C>0$ it holds that \begin{equation*}
        \lim_{n\to\infty}\int_{\mathbb S^{d-1}}\frac{(1+C\varepsilon_n|S\phi_n|)^{p'}}{1+\varepsilon^2_n(S\phi_n)^2}(S\phi_n)^2\,\mathrm d \mu=\int_{\mathbb S^{d-1}}(S\phi)^2\,\mathrm d \mu\,.
    \end{equation*}
\end{lemma}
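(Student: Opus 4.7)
The plan is a four-step compactness argument in an Orlicz-type setting: extract a weak limit, upgrade to a.e.\ convergence of $S\phi_n$, establish $S\phi\in L^2(\mathbb S^{d-1})$ via a duality argument, and then identify the limit of the weighted integral using a uniform Orlicz-type bound on $S\phi_n$ combined with Vitali's convergence theorem. First, I would deduce from the hypothesis \eqref{eq:bddassump} that $\{\phi_n\}$ is bounded in $L^{q'}(B^d)$ by splitting the domain into $\{|\phi_n|\le 1/\varepsilon_n\}$ (where the weight $(1+\varepsilon_n|\phi_n|)^{2-q'}\le 2^{2-q'}$, so the hypothesis gives an $L^2$-bound, and H\"older against the probability measure $\nu$ gives an $L^{q'}$-bound) and its complement (where the same weight dominates a constant multiple of $(\varepsilon_n|\phi_n|)^{2-q'}$, so the hypothesis bounds $\varepsilon_n^{2-q'}\int|\phi_n|^{q'}\,\mathrm d\nu$ by a constant). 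Extracting a subsequence yields $\phi_n\rightharpoonup \phi$ weakly in $L^{q'}(B^d)$.

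Second, dualizing Lemma \ref{lem:cpctQ} with $r=p$, the adjoint $S\colon L^{q'}(B^d)\to L^{\tilde r}(\mathbb S^{d-1})$ is compact for every $\tilde r<p'$; hence $S\phi_n\to S\phi$ strongly in each such $L^{\tilde r}$ and, along a further subsequence, pointwise a.e.\ on $\mathbb S^{d-1}$. Third, to show $S\phi\in L^2$, I would test against $\psi\in L^2\cap L^p(\mathbb S^{d-1})$: the adjoint relation $\int(S\phi_n)\psi\,\mathrm d\mu=\int\phi_n\, Q\psi\,\mathrm d\nu$ combined with Cauchy--Schwarz against the weight $(1+\varepsilon_n|\phi_n|)^{2-q'}$ yields
\[
\Big|\int(S\phi_n)\psi\,\mathrm d\mu\Big|^2\le \int(Q\psi)^2(1+\varepsilon_n|\phi_n|)^{2-q'}\,\mathrm d\nu,
\]
and the elementary bound $(1+\varepsilon_n|\phi_n|)^{2-q'}\lesssim 1+\varepsilon_n^{2-q'}|\phi_n|^{2-q'}$ splits the right-hand side into a piece $\lesssim\|\psi\|_2^2$ and a piece $\lesssim\varepsilon_n^{2-q'}\|Q\psi\|_q^2\|\phi_n\|_{q'}^{2-q'}$ (via H\"older with exponents $q'/(2q'-2)$ and $q'/(2-q')$, exploiting $q'/(q'-1)=q$), where the last factor is bounded by a constant multiple of $\|\psi\|_p^2$ through the primal HWY-inequality. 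Since $\varepsilon_n\to 0$ and $\|\phi_n\|_{q'}$ is bounded, passing to the weak limit and invoking density gives $|\int(S\phi)\psi\,\mathrm d\mu|\lesssim\|\psi\|_2$ on a dense subset, hence $S\phi\in L^2(\mathbb S^{d-1})$.

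For the identification of the limit, the fact that $p'<2$ makes the weight $(1+Ct)^{p'}/(1+t^2)$ comparable to $(1+t)^{p'-2}$ up to constants depending on $C$ and $d$, so the integrand $f_n$ in the claim satisfies $f_n\lesssim (S\phi_n)^2/(1+\varepsilon_n|S\phi_n|)^{2-p'}$ pointwise and converges a.e.\ to $(S\phi)^2$ by the second step. Vitali's convergence theorem will identify the limit once one establishes a uniform Orlicz-type bound
\[
X_n\coloneqq\int_{\mathbb S^{d-1}}\frac{(S\phi_n)^2}{(1+\varepsilon_n|S\phi_n|)^{2-p'}}\,\mathrm d\mu\lesssim 1,
\]
for this bound, together with the a.e.\ convergence, gives the equi-integrability of $\{f_n\}$. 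I would derive it by repeating the duality argument above with the specific test function $\psi_n\coloneqq S\phi_n/(1+\varepsilon_n|S\phi_n|)^{2-p'}$, for which $\int(S\phi_n)\psi_n\,\mathrm d\mu=X_n$ and $\|\psi_n\|_2^2\le X_n$; the H\"older/HWY chain then produces a self-bounding inequality of the form $X_n\lesssim X_n^{1/2}+(\text{error in }\varepsilon_n)$, which closes to $X_n\lesssim 1$ provided the $\varepsilon_n$-dependent terms are absorbed correctly.

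The hard part will be precisely the closure of this self-bounding inequality: the test function $\psi_n$ depends nonlinearly on $S\phi_n$, and its $L^2$- and $L^p$-norms must be balanced against the factor $\varepsilon_n^{2-q'}$ so that every $\varepsilon_n$-dependent error term appears with a nonnegative power of $\varepsilon_n$ and can be absorbed into $X_n$. The sub-Hilbertian regime $q'<2$ is the source of this difficulty, mirroring the additional complexity of the dual stability problem over its primal counterpart (cf.\ Remark (e) in the introduction); once $X_n\lesssim 1$ is secured, Vitali's theorem combined with the a.e.\ convergence concludes the proof.
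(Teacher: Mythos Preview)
Your first three steps are fine and partly parallel the paper (though your duality argument for $S\phi\in L^2$ is different from the paper's and correct). The genuine gap is in Step~4.

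First, the self-bounding inequality does not close in dimension $d=3$. Carrying out your Cauchy--Schwarz/H\"older/HWY chain gives
\[
X_n^2\lesssim X_n+\varepsilon_n^{2-q'}\|\psi_n\|_p^2\,,
\]
and the sharpest available pointwise bound $|\psi_n|^p\lesssim\varepsilon_n^{-(p-2)}(S\phi_n)^2(1+\varepsilon_n|S\phi_n|)^{p'-2}$ yields $\|\psi_n\|_p^2\lesssim\varepsilon_n^{-2(p-2)/p}X_n^{2/p}$. The resulting exponent of $\varepsilon_n$ in the error term is $\frac{2-q'}{2}-\frac{p-2}{p}=\frac{2}{d+2}-\frac{1}{d-1}$, which is negative for $d=3$ (it equals $-\tfrac1{10}$). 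The self-bounding then only gives $X_n\lesssim\varepsilon_n^{-2/15}$, which diverges; there is no smarter Hölder split that avoids this, because on the intermediate region $1\lesssim|S\phi_n|\lesssim 1/\varepsilon_n$ you have $\psi_n\sim S\phi_n$ and no a~priori control on $\|S\phi_n\|_p$.

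Second, even if you had $X_n\lesssim 1$, this is only an $L^1$-bound on $g_n=(S\phi_n)^2(1+\varepsilon_n|S\phi_n|)^{p'-2}$; together with a.e.\ convergence it does \emph{not} imply equi-integrability (think of $g_n=n\mathbbm 1_{[0,1/n]}$), so Vitali does not apply. You would need a uniform $L^r$-bound for some $r>1$.

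The paper bypasses both issues in one stroke by exploiting that $(Sv)(\omega)=\int Q(y,\omega)v(y)\,\mathrm d\nu(y)$ is an average against the probability density $Q(\,\cdot\,,\omega)$. Applying Jensen's inequality with the convex function $t\mapsto|t|^{2/q'}(1+\varepsilon_n|t|)^{(q'-2)/q'}$ gives directly
\[
\int_{\{\varepsilon_n|S\phi_n|\le 2\}}|S\phi_n|^{2p'/q'}\,\mathrm d\mu\lesssim 1\,,
\]
a genuine higher-integrability bound (note $2p'/q'>2$). This simultaneously yields $S\phi\in L^2$ via Fatou and, since it bounds $(S\phi_n)^2$ in $L^{p'/q'}$ with $p'/q'>1$ on the good set, provides exactly the uniform integrability needed for the limit. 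The complementary region $\{\varepsilon_n|S\phi_n|>2\}$ is handled by splitting $\phi_n$ according to $\{\varepsilon_n|\phi_n|\gtrless 1\}$ and using the dual HWY-inequality. Your duality approach misses this Jensen step, which is the key structural input specific to the Poisson operator.
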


This is a higher integrability result in the sense that the trivial inclusion $S\phi\in L^{p'}(\Sph^{d-1})$, which follows from $\phi\in L^{q'}(B^d)$, is improved to $S\phi\in L^2(\Sph^{d-1})$. (Recall that $2>q'$.)

Lemma \ref{lem:cpctresult} is the analogue of \cite[Lemma 3.4]{Figalli2022}. The use there of a Hardy--Poincar\'e inequality is replaced here by an application of Jensen's inequality.

The following variant of Lebesgue's dominated convergence will be of use in the next proof and can be found in \cite[Theorem 1.20]{Evans2015}, for instance.

\begin{lemma}[Variant of dominated convergence]\label{lem:dom} Let $(X,\mathcal A, \sigma)$ be a measure space. If $(f_n)$, $(g_n)$ are sequences of measurable functions on $X$ with $|f_n|\leq g_n$ for all $n\in \N$, $f_n\to f$ and $g_n\to g$ pointwise almost everywhere, and $\int_X g_n\,\mathrm d \sigma\to\int_X g \,\mathrm d\sigma$ as $n\to\infty$, where the integrals involved are finite, then $$ \lim_{n\to\infty} \int_X |f_n -f| \,\mathrm d\sigma=0 \,.$$
\end{lemma}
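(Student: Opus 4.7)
The plan is to deduce Lemma~\ref{lem:dom} from the classical Fatou lemma by a well-known trick that replaces the fixed dominating function in Lebesgue's dominated convergence theorem by a varying one whose integrals converge.

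First I would record the pointwise consequences of the hypotheses. Passing to the limit in $|f_n|\le g_n$ gives $|f|\le g$ $\sigma$-almost everywhere, so in particular $f\in L^1(X,\sigma)$ because $g\in L^1(X,\sigma)$. Consequently the sequence
\[
h_n \coloneqq g_n + g - |f_n - f|
\]
is well-defined and, since $|f_n-f|\le |f_n|+|f|\le g_n + g$, we have $h_n\ge 0$ $\sigma$-almost everywhere. Moreover, the pointwise convergences $f_n\to f$ and $g_n\to g$ imply $h_n \to 2g$ pointwise almost everywhere.

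Next I would apply Fatou's lemma to $(h_n)$:
\[
\int_X 2g\,\mathrm d\sigma \le \liminf_{n\to\infty}\int_X (g_n + g - |f_n-f|)\,\mathrm d\sigma
= \int_X g\,\mathrm d\sigma + \int_X g\,\mathrm d\sigma - \limsup_{n\to\infty}\int_X |f_n-f|\,\mathrm d\sigma,
\]
where in the last step I have used the assumption $\int_X g_n\,\mathrm d\sigma\to \int_X g\,\mathrm d\sigma$, together with the elementary identity $\liminf_n(a_n - b_n) = a - \limsup_n b_n$ valid whenever $a_n\to a\in\R$. Rearranging and using that $\int_X g\,\mathrm d\sigma$ is finite yields
\[
\limsup_{n\to\infty}\int_X |f_n-f|\,\mathrm d\sigma \le 0,
\]
and since the integrals on the left are non-negative this forces $\int_X |f_n-f|\,\mathrm d\sigma\to 0$, as claimed.

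There is no real obstacle here: the only subtlety is the finiteness assumption, which is needed so that the subtraction of $\int_X g\,\mathrm d\sigma$ in the Fatou step is legitimate and the term $\int_X|f_n-f|\,\mathrm d\sigma$ can be isolated. Since the statement explicitly assumes that the integrals in question are finite, this causes no difficulty. In the write-up I would simply verify the pointwise bounds, invoke Fatou on $h_n$, and conclude.
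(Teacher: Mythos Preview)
Your proof is correct and is the standard argument for the generalized dominated convergence theorem. The paper does not prove this lemma at all; it simply cites \cite[Theorem~1.20]{Evans2015} for the result, so there is no ``paper's own proof'' to compare against. Your Fatou-based argument is precisely the textbook approach one finds in that reference.
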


\begin{proof}[Proof of Lemma \ref{lem:cpctresult}]
    By H\"older's inequality, we have
    $$
    \int_{B^d}|\phi_n|^{{q'}}\,\mathrm d \nu\leq \left(\int_{B^d}(1+\varepsilon_n|\phi_n|)^{q'-2}\phi_n^2\,\mathrm d\nu\right)^{\frac {q'}2}\left(\int_{B^d}(1+\varepsilon_n|\phi_n|)^{{q'}}\,\mathrm d\nu\right)^{1-\frac {q'}2}.
    $$
    Since the first factor is bounded by assumption \eqref{eq:bddassump} and $(1+\varepsilon_n|\phi_n|)^{{q'}}\lesssim 1+|\phi_n|^{{q'}}$, we find
    $$\|\phi_n\|^{{q'}}_{q'}\lesssim \left(1+\|\phi_n\|_{q'}^{{q'}}\right)^{1-\frac {q'} 2}\,.$$
    This forces the sequence $(\phi_n)$ to be bounded in $L^{{q'}}(B^d)$, which implies weak convergence along a subsequence to a $\phi\in L^{{q'}}(B^d)$.

    Since $S:L^{q'}(B^d)\to L^{p'}(\Sph^{d-1})$ is bounded, we have $S\phi_n\rightharpoonup S\phi$ weakly in $L^{p'}(\Sph^{d-1})$. By Schauder's theorem, since the operator $Q:L^{r}(\mathbb S^{d-1})\to L^{q}(B^d)$, $r>p$, is compact by Lemma~\ref{lem:cpctQ}, its dual $S:L^{q'}(B^d)\to L^{\frac{r}{r-1}}(\mathbb S^{d-1})$ is also compact. Therefore, $(S\phi_n)$ converges strongly in $L^{\frac{r}{r-1}}(\mathbb S^{d-1})$ for any $r>p$. It is easy to see that the limit is equal to $S\phi$. Thus, after passing to a subsequence, we can ensure that $S\phi_n\to S\phi$ pointwise almost everywhere.

    To prove the second part of the lemma, we divide $\Sph^{d-1}$ into
    $$
    A_n\coloneqq \{\varepsilon_n |S\phi_n|\leq 2\}\qquad \text{and}\qquad A^c_n=\{\varepsilon_n |S\phi_n|>2\}\,.
    $$ 
    We are going to prove, for any fixed $C>0$,
    \begin{align}\label{eq:approxL2convAn}
        \lim_{n\to\infty} \int_{A_n} \frac{(1+C\varepsilon_n|S\phi_n|)^{{p'}}}{1+\varepsilon^2_n(S\phi_n)^2} (S\phi_n)^2 \,\mathrm d\mu
        &=\int_{\mathbb S^{d-1}}(S\phi)^2\,\mathrm d\mu < \infty \,,\\\label{eq:approxL2convAnc}
        \lim_{n\to\infty} \int_{A_n^c}\frac{(1+C\varepsilon_n|S\phi_n|)^{{p'}}}{1+\varepsilon^2_n(S\phi_n)^2}(S\phi_n)^2\,\mathrm d\mu&=0\,,
    \end{align}
    which will clearly imply the assertion of the lemma.
    
    \textit{Proof of \eqref{eq:approxL2convAn}:} As a preliminary, we prove the bound
    \begin{equation}
        \label{eq:higherint0}
            \int_{A_n} |S\phi_n|^\frac{2p'}{q'}\,\mathrm d\mu  \lesssim 1 \,.
    \end{equation}
    
    To do so, we note that, for any $\varepsilon>0$, the function defined by 
    $$
    t\mapsto |t|^{\frac 2{q'} } (1+\varepsilon |t|)^{\frac {{q'}-2}{{q'}}}$$ 
    is convex on $\R$, since its second derivative is $(1+\varepsilon t)^{\frac {{q'}-2}{{q'}}-2}t^{\frac 2{q'} -2}2(2-{q'}){q'}^{-2} >0$ on $(0,\infty)$, and it is differentiable at the origin. Therefore, by applying Jensen's inequality to the integral defining the action of the operator $S$ and recalling that $S1=1$, we deduce that
    $$
    (1+\varepsilon_n |S\phi_n|)^{\frac{{q'}-2}{{q'}}} |S\phi_n|^{\frac 2{q'}}
    \leq S\left((1+\varepsilon_n|\phi_n|)^{\frac{{q'}-2}{{q'}}}|\phi_n|^{\frac 2{q'}} \right).
    $$
    Since
    $$
    1\leq 3^{\frac{2-q'}{q'}}(1+\varepsilon_n|S\phi_n|)^{\frac{{q'}-2}{{q'}}}
    \qquad\text{on}\ A_n \,,
    $$
    we deduce from the dual HWY-inequality that
    \begin{align*}
        \int_{A_n}|S&\phi_n|^{2\frac {p'}{q'}}\,\mathrm d\mu
        \lesssim \int_{A_n}\left((1+\varepsilon_n|S\phi_n|)^{\frac{{q'}-2}{{q'}}}|S\phi_n|^{\frac 2{q'}}\right)^{p'}\mathrm d\mu 
        \leq \int_{A_n}\left(S\left((1+\varepsilon_n|\phi_n|)^{\frac{{q'}-2}{{q'}}}|\phi_n|^{\frac 2{q'}}\right)\right)^{p'}\mathrm d\mu\\        &\lesssim\left(\int_{B^d}\left((1+\varepsilon_n|\phi_n|)^{\frac{{q'}-2}{{q'}}}|\phi_n|^{\frac 2{q'}}\right)^{q'}\mathrm d\nu \right)^{\frac{p'}{q'}}= \left(\int_{B^d}(1+\varepsilon_n|\phi_n|)^{{q'}-2}\phi_n^2\,\mathrm d\nu \right)^{\frac{p'}{q'}}\leq 1\,,
    \end{align*} 
    where we used assumption \eqref{eq:bddassump} in the last step. This completes the proof of the bound \eqref{eq:higherint0}.
    
    As a consequence of \eqref{eq:higherint0}, we deduce the higher integrability result
    \begin{equation}
        \label{eq:higherint}
            S\phi \in L^{2\frac{p'}{q'}}(\Sph^{d-1}) \,.
    \end{equation}
    Indeed, since $|S\phi_n|^{2\frac {p'}{q'}} \1_{A_n} \to |S\phi|^{2\frac {p'}{q'}}$ almost everywhere, this is a consequence of \eqref{eq:higherint0} and Fatou's lemma.

    Next, we deduce that
    \begin{equation}
        \label{eq:convimproved}
            S\phi_n\mathbbm 1_{A_n} \to S\phi
            \qquad\text{in}\ L^s(\Sph^{d-1}) \ \text{for all}\ s<2\,\frac{p'}{q'} \,.
    \end{equation}
    Indeed, by H\"older's inequality, there is a $\theta\in(0,1)$ such that
    $$
    \|S\phi_n\mathbbm 1_{A_n}-S\phi\|_{s}\leq \|S\phi_n\mathbbm 1_{A_n}-S\phi\|_{2p'/q'}^\theta \|S\phi_n\mathbbm 1_{A_n}-S\phi\|_{1}^{1-\theta}\,,
    $$
    where the first factor on the right remains bounded by \eqref{eq:higherint0} and \eqref{eq:higherint}, while the second factor tends to zero. (Here we use $\mu(A_n^c)\to 0$ by $\epsilon_n S\phi_n\to 0$ almost everywhere and dominated convergence.)
    
    After these preliminaries, we now fix $C>0$ and turn to the proof of \eqref{eq:approxL2convAn}. We apply the variant of the dominated convergence theorem in Lemma \ref{lem:dom} with $$f_n\coloneqq \frac{(1+C\epsilon_n|S\phi_n|)^{p'}}{1+\varepsilon_n^2 (S\phi_n)^2} (S\phi_n)^2 \1_{A_n}\,,$$
    which converges almost everywhere to $f\coloneqq (S\phi)^2$. Denoting $m\coloneqq  \sup_{t\in[0,2]}(1+Ct)^{p'}/(1+t^2)$, we see that $|f_n|\leq m (S\phi_n)^2\1_{A_n}=:g_n$, which converges almost everywhere to $m(S\phi)^2 =: g$. The fact that $\int_{\Sph^{d-1}} g_n\,\mathrm d\mu \to \int_{\Sph^{d-1}} g\,\mathrm d\mu$ follows from \eqref{eq:convimproved} with $s=2$. (Note that $2p'/q'>2$.) Thus, Lemma \ref{lem:dom} implies that $\int_{\Sph^{d-1}} f_n\,\mathrm d\mu \to \int_{\Sph^{d-1}} f\,\mathrm d\mu$, which is what is claimed in \eqref{eq:approxL2convAn}.

\textit{Proof of \eqref{eq:approxL2convAnc}:}
First, we estimate 
\begin{equation}\label{eq:firstbdd}\int_{A_n^c}\frac{(1+C\varepsilon_n|S\phi_n|)^{{p'}}}{1+\varepsilon^2_n(S\phi_n)^2}(S\phi_n)^2\,\mathrm d\mu\lesssim \int_{A_n^c}(1+\varepsilon_n|S\phi_n|)^{{p'}-2}(S\phi_n)^2\,\mathrm d\mu\leq \varepsilon_n^{{p'}-2}\int_{A_n^c}|S\phi_n|^{p'}\,\mathrm d\mu\,.
\end{equation}
We define 
$$
\phi_{n,1}\coloneqq \phi_n\mathbbm 1_{\{\varepsilon_n |\phi_n|\leq 1\}}
\qquad\text{and}\qquad
\phi_{n,2}\coloneqq\phi_n\mathbbm 1_{\{\varepsilon_n |\phi_n|> 1\}} \,.
$$
Since
$\|Q(\,\cdot\,,\omega)\|_{1}=1$, $\omega\in \mathbb S^{d-1}$, we obtain 
\begin{equation*}
    \varepsilon_n\|S\phi_{n,1}\|_\infty \leq \varepsilon_n\|\phi_{n,1}\|_{\infty}\leq 1\,,
\end{equation*}
which implies $\varepsilon |S\phi_{n,2}|>1$ on $A_n^c$ by triangle inequality. 
Applying the dual HWY-inequality, we find
\begin{align}\notag\varepsilon_n^{{p'}-2}\int_{A_n^c}|S\phi_{n,2}|^{p'}\,\mathrm d\mu&\lesssim \varepsilon_n^{{p'}-2}\left(\int_{\{\varepsilon_n|\phi_n|>1\}}|\phi_{n}|^{q'}\,\mathrm d\nu\right)^{\frac{p'}{q'}}\\&\lesssim 
\varepsilon_n^{2\frac{{p'}-{q'}}{q'}}\left(\int_{B^d}(1+\varepsilon_n|\phi_{n}|)^{{q'}-2}\phi_{n}^2\,\mathrm d\nu\right)^{\frac{p'}{q'}}\lesssim \varepsilon_n^{2\frac{{p'}-{q'}}{q'}},\label{eq:boundlarger1}
\end{align} 
which tends to $0$ as $n\to\infty$. In the last step, we used assumption \eqref{eq:bddassump}. Since $\varepsilon |S\phi_{n}|\leq1+\varepsilon |S\phi_{n,2}|< 2\varepsilon |S\phi_{n,2}|$ on $A_n^c$, combining the estimates \eqref{eq:firstbdd} and \eqref{eq:boundlarger1} implies \eqref{eq:approxL2convAnc}.
 \end{proof}

%%%%%%%%%%%%%%%%%%%%%%%%%%%%%%%%%%%%%%%
%%%%%%%%%%%%%%%%%%%%%%%%%%%%%%%%%%%%%%%

\begin{remark}[A bound of Orlicz type] 
    Note that a similar but simpler proof than that of Lemma~\ref{lem:cpctresult} yields the following assertion: For any $\epsilon_0>0$ there is a $C=C(d,\epsilon_0)>0$ such that for all $\varepsilon\in (0,\varepsilon_0]$ and $\phi\in  L^{q'}(B^d)$ satisfying
    \begin{equation*}
        \int_{B^d} \frac{\phi^2}{(1+\varepsilon|\phi|)^{2-q'}} \,\mathrm d \nu\leq 1\,,
    \end{equation*}
    we have 
    \begin{equation*}
    \int_{\Sph^{d-1}} \frac{(S\phi)^2}{(1+\varepsilon|S\phi|)^{2-p'}} \,\mathrm d\mu\leq C\int_{B^d} \frac{\phi^2}{(1+\varepsilon|\phi|)^{2-q'}} \,\mathrm d \nu \,.
    \end{equation*}
    This Orlicz-type bound for the dual of the harmonic extension operator might be of independent interest. In \cite[Corollary 3.5]{Figalli2022} an analogous bound is proved for the $p$-Sobolev inequality. In our proof of Proposition \ref{prop:localstabilitydual} such an inequality is not needed though.
\end{remark}

%%%%%%%%%%%%%%%%%%%%%%%%%%%%%%%%
%%%%%%%%%%%%%%%%%%%%%%%%%%%%%%%%

\subsection{Spectral gaps} 
\label{subsec:specgap2}
As in Subsection \ref{subsec:specgap}, the goal of this subsection is to prove a `non-linear spectral gap' inequality -- this time for the dual HWY-inequality \eqref{eq:HWYspheredual}. We recall that the quantity $\zeta$ was introduced in Lemma \ref{lem:elemineq2} and that the space spanned by spherical harmonics of degree $0$ and $1$ is denoted by $\mathcal H^l$.

\begin{proposition}[Non-linear spectral gap inequality for $S$]\label{prop:anospecV}
 Let $d\geq 3$ and $\lambda\in (0,2)$. For any $\gamma_0>0$ and $C>0$, there is a $\delta=\delta(d, \lambda, \gamma_0,C)>0$ such that for any $\phi\in L^{{q'}}(B^d)\cap (Q\mathcal H^l)^\perp$ with $\|\phi\|_{q'}\leq \delta$ we have
 \begin{align*}
    \int_{B^{d}} (\phi^2+&(q'-2)\zeta(\phi)(1-|1+\phi|)^2)\, \mathrm d\nu +\gamma_0\int_{B^d}\min\{|\phi|^{{q'}},\phi^{2}\}\, \mathrm d\nu \\&\geq (q'-1)\frac{d+2+\lambda}{d}\int_{\mathbb S^{d-1}}\frac{(1+C|S\phi|)^{p'}}{1+(S\phi)^2}(S\phi)^2\,\mathrm d\mu\,.
 \end{align*} 
 \end{proposition}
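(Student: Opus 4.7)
I argue by contradiction following the template of Proposition \ref{prop:anospec}, but replacing the ordinary $L^2$-compactness of $Q$ with the Orlicz-type compactness of Lemma \ref{lem:cpctresult}. Suppose for some $\lambda_0\in(0,2)$, $\gamma_0>0$, $C>0$ the inequality fails: there is a sequence $(\phi_n)\subset L^{q'}(B^d)\cap (Q\mathcal H^l)^\perp$ with $\|\phi_n\|_{q'}\to 0$ for which the reverse inequality holds. Denote the two left-hand integrals by $A_n,B_n$ and the right-hand integral (without the prefactor) by $C_n$. I introduce the normalization dictated by Lemma \ref{lem:cpctresult}:
\begin{equation*}
\varepsilon_n^2 \coloneqq \int_{B^d}\frac{\phi_n^2}{(1+|\phi_n|)^{2-q'}}\,\mathrm d\nu\,,\qquad \psi_n\coloneqq \phi_n/\varepsilon_n\,,
\end{equation*}
so that $\int_{B^d}\psi_n^2(1+\varepsilon_n|\psi_n|)^{q'-2}\,\mathrm d\nu=1$. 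A case analysis on $\{|\phi_n|\leq 1\}$ versus $\{|\phi_n|>1\}$ gives $\varepsilon_n^2\sim B_n$, and $\varepsilon_n^2\leq \|\phi_n\|_{q'}^{q'}$ forces $\varepsilon_n\to 0$. Lemma \ref{lem:cpctresult} then delivers, along a subsequence, a weak limit $\psi_n\rightharpoonup\psi$ in $L^{q'}(B^d)$ with $S\psi\in L^2(\Sph^{d-1})$ and $C_n/\varepsilon_n^2\to\|S\psi\|_2^2$. Together with $B_n/\varepsilon_n^2\gtrsim 1$ and the violated inequality, this forces $\|S\psi\|_2^2>0$.

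The main obstacle is to show that $\psi\in L^2(B^d)$ and to establish $\liminf_n A_n/\varepsilon_n^2\geq (q'-1)\|\psi\|_2^2$. For each $\delta\in(0,1)$ I set $E_{n,\delta}\coloneqq \{|\phi_n|\leq\delta\}$. The Orlicz normalization provides the uniform bound $\int_{E_{n,\delta}}\psi_n^2\,\mathrm d\nu\leq 2^{2-q'}$, so $\psi_n\mathbbm 1_{E_{n,\delta}}$ is bounded in $L^2(B^d)$. Testing against $\eta\in L^q(B^d)\cap L^2(B^d)$ (a dense subset of $L^2$) and using $\nu(E_{n,\delta}^c)\to 0$ together with the uniform $L^{q'}$-boundedness of $\psi_n$, I identify the weak $L^2$-limit of $\psi_n\mathbbm 1_{E_{n,\delta}}$ with $\psi$, and in particular $\psi\in L^2(B^d)$. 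On $E_{n,\delta}$ the explicit formula for $\zeta$ from Lemma \ref{lem:elemineq2} yields the pointwise bound
\begin{equation*}
\phi_n^2+(q'-2)\zeta(\phi_n)(1-|1+\phi_n|)^2\geq \frac{q'-1}{1+(2-q')\delta}\,\phi_n^2\,,
\end{equation*}
while on the complement the integrand is non-negative by a direct sign check (e.g., for $\phi>0$ it equals $(q'-1)\phi^2/((2-q')(1+\phi)+q'-1)$). Weak lower semicontinuity of the $L^2$-norm then gives $\liminf_n A_n/\varepsilon_n^2\geq \tfrac{q'-1}{1+(2-q')\delta}\|\psi\|_2^2$, and sending $\delta\to 0$ completes the lower bound.

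To close the argument I appeal to the dual linear spectral gap: any $\psi\in L^2(B^d)$ with $S\psi\perp\mathcal H^l$ in $L^2(\Sph^{d-1})$ satisfies $\|S\psi\|_2^2\leq \tfrac{d}{d+4}\|\psi\|_2^2$. This is obtained by expanding $\psi=\sum_{\ell,m}\psi_{\ell,m}(|y|)Y_{\ell,m}(y/|y|)$, applying Cauchy--Schwarz to $(S\psi)_{\ell,m}=d\int_0^1 r^{\ell+d-1}\psi_{\ell,m}(r)\,\mathrm dr$, and keeping only indices $\ell\geq 2$, in parallel with the proof of Lemma \ref{lem:spec}. The orthogonality $\phi_n\perp Q\mathcal H^l$ passes to the weak limit because $QY$ is bounded, so $\psi\perp Q\mathcal H^l$, equivalently $S\psi\perp\mathcal H^l$. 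Combining everything,
\begin{equation*}
\liminf_n\frac{A_n+\gamma_0 B_n}{\varepsilon_n^2}\geq (q'-1)\|\psi\|_2^2\geq \frac{(q'-1)(d+4)}{d}\|S\psi\|_2^2\,,
\end{equation*}
while the violated inequality forces the same quantity to be strictly less than $\tfrac{(q'-1)(d+2+\lambda_0)}{d}\|S\psi\|_2^2$. Since $\lambda_0<2$ and $\|S\psi\|_2>0$, this is the sought contradiction.
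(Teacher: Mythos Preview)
Your proof is correct and follows essentially the same approach as the paper's: contradiction, Orlicz normalization $\varepsilon_n$, Lemma~\ref{lem:cpctresult} for the right side, a pointwise lower bound on $\{|\phi_n|\leq\delta\}$ together with weak $L^2$-convergence of the truncated sequence for the left side, and the dual spectral gap $\|S\psi\|_2^2\leq\frac{d}{d+4}\|\psi\|_2^2$ to close. The only cosmetic differences are that you obtain the $L^2$-boundedness of $\psi_n\mathbbm 1_{E_{n,\delta}}$ directly from the normalization (rather than from the violated inequality as in the paper), and you prove the dual spectral gap by direct spherical-harmonic computation rather than by the duality argument of Lemma~\ref{lem:specV}.
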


 %%%%%%%%%%%%%%%%%%%%%%%%%%%%%%%%%%%%%%%%%%%%%%%
The proof of Proposition \ref{prop:anospecV} proceeds by a compactness argument (namely, Lemma \ref{lem:cpctresult}) to reduce matters to a linear spectral inequality for the operator $S$. The latter can be deduced via duality from the corresponding linear spectral gap inequality for $Q$ in Lemma \ref{lem:spec}.

\begin{lemma}[Spectral gap inequality for $S$]\label{lem:specV}
 Let $d\geq 3$. For any function $\phi\in L^{2}(B^d)\cap (Q\mathcal H^l)^\bot$, we have
 \begin{equation*}
 \|\phi\|_{2}^2\geq \frac{d+4}{d}\|S\phi\|_2^2\,.\end{equation*}
 \end{lemma}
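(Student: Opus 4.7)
The plan is to derive Lemma~\ref{lem:specV} directly from Lemma~\ref{lem:spec} by the standard $L^2$-duality between $Q$ and $S$. Since the two operators share the same integral kernel $Q(y,\omega)$ and both $\mathrm d\mu$, $\mathrm d\nu$ are probability measures, Fubini's theorem yields, for all $u\in L^2(\Sph^{d-1})$ and $v\in L^2(B^d)$, the identity
\begin{equation*}
  \langle Qu,v\rangle_{L^2(B^d)}=\langle u,Sv\rangle_{L^2(\Sph^{d-1})}\,,
\end{equation*}
so that $S$ is literally the $L^2$-adjoint of $Q$.

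With this in hand, I would first rephrase Lemma~\ref{lem:spec} as the operator-norm bound
\begin{equation*}
  \bigl\| Q\big|_{(\mathcal H^l)^\perp}\bigr\|_{L^2(\Sph^{d-1})\to L^2(B^d)}\leq \sqrt{\tfrac{d}{d+4}}\,.
\end{equation*}
Next I would observe that the orthogonality condition $\phi\in (Q\mathcal H^l)^\perp$ propagates under $S$: for any $\psi\in\mathcal H^l$,
\begin{equation*}
  \langle S\phi,\psi\rangle_{L^2(\Sph^{d-1})}=\langle \phi,Q\psi\rangle_{L^2(B^d)}=0\,,
\end{equation*}
so that $S\phi\in(\mathcal H^l)^\perp$ whenever $\phi\in (Q\mathcal H^l)^\perp$.

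Finally, I would use duality of operator norms. Since $S\phi\in (\mathcal H^l)^\perp$,
\begin{equation*}
  \|S\phi\|_2=\sup_{\substack{\psi\in(\mathcal H^l)^\perp\\ \|\psi\|_2\leq 1}}\langle S\phi,\psi\rangle_{L^2(\Sph^{d-1})} =\sup_{\substack{\psi\in(\mathcal H^l)^\perp\\ \|\psi\|_2\leq 1}}\langle \phi,Q\psi\rangle_{L^2(B^d)} \leq \|\phi\|_2\sup_{\substack{\psi\in(\mathcal H^l)^\perp\\ \|\psi\|_2\leq 1}}\|Q\psi\|_2\leq \sqrt{\tfrac{d}{d+4}}\,\|\phi\|_2\,,
\end{equation*}
by Cauchy--Schwarz and Lemma~\ref{lem:spec}. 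Squaring gives the asserted inequality $\|\phi\|_2^2\geq \frac{d+4}{d}\|S\phi\|_2^2$.

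The argument is essentially a textbook duality computation and I do not anticipate any real obstacle; the only point worth flagging is that one must use the correct orthogonal complement on each side, namely $\mathcal H^l$ on the sphere and $Q\mathcal H^l$ on the ball, so that the duality cleanly relates the restricted operators. As an alternative route, one could equally well use the singular value decomposition of $Q$ coming from spherical harmonics (as in the proof of Lemma~\ref{lem:spec}): the functions $\widetilde Y_{\ell,m}\coloneqq \sqrt{(d+2\ell)/d}\,QY_{\ell,m}$ form an orthonormal basis of $\overline{\mathrm{ran}\,Q}=(\ker S)^\perp$ in $L^2(B^d)$, with $S\widetilde Y_{\ell,m}=\sqrt{d/(d+2\ell)}\,Y_{\ell,m}$; expanding $\phi$ in this basis (the $\ell=0,1$ coefficients vanish by the orthogonality hypothesis) directly yields the same bound.
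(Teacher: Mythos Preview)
Your proof is correct and follows essentially the same approach as the paper's: both use the $L^2$-duality $\langle Qu,v\rangle=\langle u,Sv\rangle$, the observation that $\phi\perp Q\mathcal H^l$ forces $S\phi\perp\mathcal H^l$, and then Cauchy--Schwarz together with Lemma~\ref{lem:spec} to bound $\|S\phi\|_2$. The only cosmetic difference is that the paper tests against $\psi=S\phi$ directly rather than writing the sup, and uses the projection $\Pi^h$ rather than restricting $\psi$ to $(\mathcal H^l)^\perp$ from the outset.
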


\begin{proof}
    Let $\psi\in L^2(\Sph^{d-1})$ and let $\Pi^h\psi$ be the projection of $\psi$ onto the orthogonal complement of $\mathcal H^l$. Then for $\phi\bot Q\mathcal H^l$,
    $$
    \langle \psi, S\phi \rangle = \langle \Pi^h \psi, S \phi \rangle = \langle Q \Pi^h\psi,\phi \rangle \,.
    $$
    Thus, by Lemma \ref{lem:spec},
    $$
    \left| \langle \psi, S\phi \rangle \right| \leq \|Q \Pi^h \psi\|_{2} \|\phi\|_{2} \leq \sqrt\frac{d}{d+4} \|\Pi^h \psi \|_2 \|\phi\|_{2} \leq \sqrt\frac{d}{d+4} \|\psi \|_2 \|\phi\|_{2} \,.
    $$
    Choosing $\psi=S\phi$, we obtain the claimed inequality.
\end{proof}

\begin{proof}[Proof of Proposition \ref{prop:anospecV}]
    By contradiction, assume that there is a non-trivial sequence $(\phi_n)\subset L^{q'}(B^d)\cap (Q\mathcal H^l)^\perp$
    with $\phi_n\rightarrow 0$ 
    in $L^{q'}(B^d)$ as $n\to\infty$ and
    \begin{equation}\label{eq:Vspec1}
    \begin{split}
    \int_{B^{d}} (\phi_n^2+ &(q'-2)\zeta(\phi_n)(1-|1+\phi_n|)^2)\, \mathrm d\nu +\gamma_0\int_{B^d}\min\{|\phi_n|^{{q'}},\phi_n^2\}\, \mathrm d\nu \\
    &< (q'-1)\frac{d+2+\lambda}{d}\int_{\mathbb S^{d-1}}\frac{(1+C|S\phi_n|)^{p'}}{1+(S\phi_n)^2}(S\phi_n)^2\,\mathrm d\mu\,.
     \end{split}
    \end{equation} 
    For
    \begin{equation*}
    \varepsilon_n\coloneqq\left(\int_{B^d}(1+|\phi_n|)^{q'-2}\phi_n^2 \,\mathrm d \nu \right)^{1/2}
    \end{equation*}
    we set $\hat{\phi}_n\coloneqq \varepsilon_n^{-1}\phi_n$. As $q'-2<0$, it follows that
    \begin{equation*}
        \varepsilon_n^2\leq  \|\phi_n\|_{q'}^{q'} \rightarrow 0\,.
    \end{equation*}

    We note that the integrand of the first integral on the left side of \eqref{eq:Vspec1} is pointwise nonnegative. Hence, to obtain a lower bound, we can restrict the domain of integration.
    
    Fix $\tilde\varepsilon>0$. We choose $\tilde\delta\in (0,1)$ sufficiently small so that $\zeta(\phi_n)\leq 1+\frac{q'-1}{2-q'}\tilde\varepsilon$ on $I_n\coloneqq\{|\phi_n|\leq \tilde \delta\}$. On $I_n$ we also have $1+\phi_n\geq 0$, and so it follows that
    \begin{equation*}
       \phi_n^2+(q'-2)\zeta(\phi_n)\left( 1- |{\phi}_n+1| \right)^2\geq  (q'-1)(1-\tilde\varepsilon)\phi_n^2\,.
    \end{equation*}
    For the second term on the left side of \eqref{eq:Vspec1}, we restrict to $I_n^c=\{|\phi_n|>\tilde\delta\}$. In this way, we obtain from \eqref{eq:Vspec1} that
    \begin{equation}\label{eq:Vspec2}
    \begin{split}
    (1-\tilde\varepsilon)\int_{I_n} \hat\phi_n^2 \, \mathrm d\nu + \frac{\gamma_0}{q'-1} \epsilon_n^{q'-2}{\tilde\delta}^{2-q'} \int_{I_n^c} |\hat \phi_n|^{{q'}} \, \mathrm d\nu
    < \frac{d+2+\lambda}{d}\int_{\mathbb S^{d-1}}\frac{(1+C|S\phi_n|)^{p'}}{1+(S\phi_n)^2}(S\hat \phi_n)^2\,\mathrm d\mu\,.
     \end{split}
    \end{equation} 

    By definition, we have $\int_{B^d} (1+\epsilon_n|\hat\phi_n|)^{q'-2} \hat\phi_n^2\,d\nu = 1$, and therefore, by Lemma \ref{lem:cpctresult}, after passing to a subsequence, we have that $\hat{\phi}_n\rightharpoonup \hat{\phi}$ in $L^{q'}(B^d)$ for some $\hat{\phi}\in L^{q'}(B^d)$ with $S\hat\phi\in L^2(\Sph^{d-1})$, and 
    \begin{equation*}
        \int_{\Sph^{d-1}}\frac{(1+C|S\phi_n|)^{p'}}{1+(S\phi_n)^2}(S\hat{\phi}_n)^2\,\mathrm d \mu \rightarrow \int_{\Sph^{d-1}}(S\hat{\phi})^2\,\mathrm d\mu\,.
    \end{equation*}

    Combining this with 
    \eqref{eq:Vspec2}, we infer
    \begin{equation}
        \label{eq:Vspec5}
        \limsup_{n\to\infty} \left( (1-\tilde\varepsilon)\int_{I_n} \hat\phi_n^2\,\mathrm d\nu + \frac{\gamma_0}{q'-1}\tilde\delta^{2-q'} \epsilon_n^{q'-2} \int_{I_n^c} |\hat \phi_n|^{{q'}} \, \mathrm d\nu \right)
        \leq \frac{d+2+\lambda}{d} \int_{\Sph^{d-1}} (S\hat\phi)^2\,\mathrm d\mu \,.
    \end{equation}

    A first consequence of \eqref{eq:Vspec5} is that $S\hat\phi\neq 0$. Indeed, we have
    \begin{equation*}
        1= \int_{B^d}(1+|\phi_n|)^{q'-2}\hat\phi_n^2 \,\mathrm d \nu \leq \int_{I_n}|\hat{\phi}_n|^2\,\mathrm d \nu+\int_{I_n^c}\varepsilon^{q'-2}_n|\hat{\phi}_n|^{q'}\,\mathrm d \nu\,,
    \end{equation*}
    and, according to \eqref{eq:Vspec5}, the limsup of the right side is bounded by a constant times $\|S\hat\phi\|_2^2$, so the latter quantity is non-zero, as claimed. We mention in passing that this part of the argument is the only part where $\gamma_0>0$ is used.

    Another consequence of \eqref{eq:Vspec5} is that the sequence $(\hat\phi_n \1_{I_n})$ is bounded in $L^2(B^d)$ and therefore has a weak limit point. We claim that
    \begin{equation}\label{eq:Vspec6}
        \hat\phi_n \1_{I_n} \rightharpoonup \hat\phi
        \qquad\text{weakly in}\ L^2(B^d) \,.
    \end{equation}
    To prove this, let $\tilde\phi\in L^2(B^d)$ be a weak limit point of $(\hat\phi_n\1_{I_n})$ in $L^2(B^d)$. Thus, along a subsequence, we obtain for all $f\in L^2(B^d)$ that
    $$
    \int_{I_n} f\hat\phi_n\,\mathrm d\nu \to \int_{B^d} f \tilde\phi\,\mathrm d\nu \,.
    $$
    Under the stronger assumption $f\in L^q(B^d)$, we can decompose the left side as
    $$
    \int_{I_n} f\hat\phi_n\,\mathrm d\nu = \int_{B^d} f\hat\phi_n\,\mathrm d\nu - \int_{I_n^c} f\hat\phi_n\,\mathrm d\nu \,,
    $$
    where the first term converges to $\int_{B^d} f\hat\phi\,\mathrm d\nu$. The second term is bounded by
    $$
    \left| \int_{I_n^c} f\hat\phi_n\,\mathrm d\nu \right| \leq \| f\1_{I_n^c}\|_{q} \|\hat\phi_n \|_{q'} \,.
    $$
    This upper bound tends to zero since $\|\hat\phi_n\|_{q'}$ is bounded and since
    $$
    \nu(I_n^c) \leq \tilde\delta^{-q'}\int_{I_n^c} |\phi_n|^{q'}\,\mathrm d\nu \leq \tilde\delta^{-q'}\|\phi_n\|_{q'}^{q'} \to 0 \,,
    $$
    so $\|f\1_{I_n^c}\|_{q}\to 0$. Thus, we obtain
    $$
    \int_{B^d} f \tilde\phi\,\mathrm d\nu = \int_{B^d} f\hat\phi\,\mathrm d\nu
    \qquad\text{for all}\ f\in L^q(B^d) \,.
    $$
    Since $L^q(B^d)$ is dense in $L^2(B^d)$, this implies $\tilde\phi=\hat\phi$, as claimed in \eqref{eq:Vspec6}.

    In view of weak lower semicontinuity, \eqref{eq:Vspec5} and \eqref{eq:Vspec6} imply
    $$
    (1-\tilde\varepsilon)\|\hat\phi\|_{2}^2 \leq \frac{d+2+\lambda}{d} \|S\hat\phi\|_2^2 \,.
    $$
    Since this holds for any $\tilde\varepsilon>0$, we can take $\tilde\varepsilon= 0$. 
    The orthogonality condition $\hat\phi_n\bot Q\mathcal H^l$ implies $\hat\phi\bot Q\mathcal H^l$. Therefore, recalling also $\lambda<2$ and $S\hat\phi\neq 0$, the previous inequality contradicts the spectral gap inequality in Lemma \ref{lem:specV}, which concludes the proof.
\end{proof}

%%%%%%%%%%%%%%%%%%%%%%%%%%%%%%%%%%%%%%%
%%%%%%%%%%%%%%%%%%%%%%%%%%%%%%%%%%%%%%%

\subsection{Close optimizer with orthogonality conditions}

In this subsection we show that functions close to the set of optimizers can be M\"obius transformed in order to achieve orthogonality conditions while, at the same time, maintaining some closeness to the set of optimizers. While we proved a similar result for the primal HWY-inequality in Proposition \ref{prop:almorthog}, the details in the dual setting are rather different.

\begin{proposition}[Orthogonal approximate optimizer]
\label{prop:dualorthogmin}
   Let $(v_n)\subset L^{q'}(B^d)$ be a sequence of functions with
   $$
   \| v_n \|_{q'}=1
   \qquad\text{and}\qquad
   \inf_\Phi \| [v_n]_\Phi - 1\|_{q'}\to 0
   $$
   for $n\to\infty$. Then there is a sequence of M\"obius transformations $\Phi_n$ such that
   $$
   r_n\coloneqq [v_n]_{\Phi_n} - 1
   $$
   satisfies, as $n\to\infty$,
   $$
   \|r_n\|_{q'} \to 0
   $$
   and, for all sufficiently large $n$,
   $$
   \int_{B^d} y_i r_n(y)\,d\nu(y) = 0
   \qquad\text{for all}\ i=1,\ldots,d\,.
   $$
\end{proposition}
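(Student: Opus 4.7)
The plan is to proceed in two stages. In the first stage, we use the assumption $\inf_\Phi \|[v_n]_\Phi-1\|_{q'}\to 0$ to choose preliminary M\"obius transformations $\tilde\Phi_n$ (any almost-minimizers) such that $\tilde v_n \coloneqq [v_n]_{\tilde\Phi_n}$ converges to $1$ in $L^{q'}(B^d)$. In the second stage, we compose with a further M\"obius transformation drawn from the $d$-parameter family $\Phi_\eta$, $\eta\in B^d$, of M\"obius transformations of $B^d$ mapping $0$ to $\eta$, and adjust $\eta$ to satisfy the $d$ orthogonality conditions $\int_{B^d} y_i r_n \,\mathrm d\nu=0$.

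For the adjustment, note that since $\int_{B^d} y_i\,\mathrm d\nu =0$, the orthogonality conditions are equivalent to $G_n(\eta)=0$, where
\[
G_n(\eta) \coloneqq \int_{B^d} y\,[\tilde v_n]_{\Phi_\eta}(y)\,\mathrm d\nu(y) \in \R^d\,.
\]
Since the functions $y_i$ are bounded and the action $v\mapsto [v]_{\Phi_\eta}$ is a bounded linear map on $L^{q'}(B^d)$ with norms locally uniform in $\eta$, the convergence $\tilde v_n\to 1$ in $L^{q'}$ implies that $G_n\to G_\infty$ uniformly on a closed ball $\overline{B_\rho}\subset B^d$ around $\eta=0$, where $G_\infty(\eta)=\int_{B^d} y\,[1]_{\Phi_\eta}\,\mathrm d\nu$. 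We have $G_\infty(0)=0$, and the derivative matrix $DG_\infty(0)$ has entries $\int_{B^d} y_i\,\partial_{\eta_j}[1]_{\Phi_\eta}\big|_{\eta=0}\,\mathrm d\nu$. By equivariance of $\eta\mapsto\Phi_\eta$ under $O(d)$, the function $\partial_{\eta_j}[1]_{\Phi_\eta}\big|_{\eta=0}$ is of the form $y_j\,h(|y|)$ for some scalar $h$, so $DG_\infty(0)=c_d\, I$ for a constant $c_d$ that can be shown to be nonzero via the explicit expansion $\Phi_\eta(y)= y+(1-|y|^2)\eta+O(|\eta|^2)$ and the corresponding first-order expansion of $J_{\Phi_\eta}^{(d+2)/(2d)}$. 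An application of the implicit function theorem (or equivalently, a Brouwer degree argument for $G_n$, which is a small uniform perturbation of the local diffeomorphism $G_\infty$ on $\overline{B_\rho}$) yields a sequence $\eta_n\to 0$ with $G_n(\eta_n)=0$ for all sufficiently large $n$. Defining $\Phi_n$ so that $[v_n]_{\Phi_n}=[\tilde v_n]_{\Phi_{\eta_n}}$ then gives the desired orthogonality.

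It remains to check that $\|r_n\|_{q'}\to 0$. Writing
\[
r_n = [\tilde v_n]_{\Phi_{\eta_n}}-1 = [\tilde v_n - 1]_{\Phi_{\eta_n}} + \left([1]_{\Phi_{\eta_n}}-1\right),
\]
the first term has $L^{q'}$-norm equal to $\|\tilde v_n-1\|_{q'}\to 0$ by conformal invariance of the $L^{q'}$-norm, and the second term tends to $0$ in $L^{q'}(B^d)$ by continuity of $\eta\mapsto [1]_{\Phi_\eta}=J_{\Phi_\eta}^{(d+2)/(2d)}$ (which follows from dominated convergence, since $\eta_n$ stays in a compact subset of $B^d$) together with $\eta_n\to 0$.

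The main obstacle in this plan is the verification that the scalar $c_d$ in $DG_\infty(0)=c_d I$ is nonzero, which requires working out the first-order expansion of the conformal factor $J_{\Phi_\eta}^{(d+2)/(2d)}$ and carrying out the resulting radial integral. This is a straightforward but somewhat tedious computation using the explicit formula for the Poincar\'e ball isometries; once it is done, everything else in the argument is of a soft nature (uniform convergence plus implicit function theorem), and is independent of the fine structure of the $L^{q'}$-norm with $q'<2$.
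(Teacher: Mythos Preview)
Your approach is correct and genuinely different from the paper's. The paper proceeds \emph{variationally}: after the same preliminary reduction to $\tilde v_n\to 1$, it maximizes the functional $\eta\mapsto F_{\tilde v_n}(\eta)=\int_{B^d}[\tilde v_n]_{\Phi_\eta}\,\mathrm d\nu$ over $B^d$. Existence of a maximizer follows from a boundary estimate, and the orthogonality conditions drop out as the first-order Euler--Lagrange conditions $\nabla_\eta F_{\tilde v_n}(\eta_n)=0$, which one computes to equal $-(d-2)\int_{B^d}y\,r_n\,\mathrm d\nu$. The convergence $\eta_n\to 0$ is then obtained by comparing $F_{\tilde v_n}(\eta_n)$ with $F_1(\eta_n)$ and using that $\eta=0$ is the unique maximizer of $F_1$. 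You instead solve the orthogonality equations $G_n(\eta)=0$ directly via a perturbative degree argument, using that $G_\infty=\lim G_n$ has an isolated nondegenerate zero at the origin. Both routes are short; the paper's avoids the explicit computation of $DG_\infty(0)$ (the nondegeneracy is hidden in the fact that $F_1$ has a strict maximum at $0$), while yours avoids proving attainment of the supremum and is perhaps more transparently robust.

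Two minor caveats: with the paper's convention \eqref{eq:phieta} one has $\Phi_\eta(0)=-\eta$ rather than $\eta$, and the first-order expansion you quote for $\Phi_\eta$ is not quite right; what actually matters is $\partial_{\eta_j}[1]_{\Phi_\eta}\big|_{\eta=0}=(d+2)y_j$, coming directly from \eqref{eq:phietajac}, which gives $DG_\infty(0)=(d+2)\int_{B^d}y_iy_j\,\mathrm d\nu = I$. Also, the plain implicit function theorem would require control of $DG_n$; the Brouwer degree version (which you do mention) is the one that goes through with only uniform $C^0$-convergence of $G_n$ to $G_\infty$.
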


%%%%%%%%%%%%%%%%%%%%%%%%%

There are two differences compared to Proposition \ref{prop:almorthog}. First, instead of \emph{almost} orthogonality conditions as used in the latter, here we achieve genuine orthogonality conditions. Second, instead of $d+1$ (almost) orthogonality conditions in the primal setting, here we only receive $d$. (The remaining orthogonality condition will be achieved in the proof of Proposition \ref{prop:localstabilitydual}.)

The argument we give is based on \cite[Lemma 4.1]{Figalli2022}.

\begin{proof}[Proof of Proposition \ref{prop:dualorthogmin}]
    \emph{Step 1.}
   We recall the explicit family $\Phi_\eta$, $\eta\in B^d$, of M\"obius transformations of $B^d$ given in \eqref{eq:phieta}. For $v\in L^{q'}(B^d)$ we define the functional $F_v:B^d\to\R$,
    \begin{equation*}
        {F}_v(\eta)\coloneqq\int_{B^d}[v]_{\Phi_{\eta}}\,\mathrm d \nu \,. 
    \end{equation*}
    Note that, by H\"older's inequality and conformal invariance,
    \begin{equation}
        \label{eq:orthodualproof}
            |F_v(\eta)| \leq \|[v]_{\Phi_\eta}\|_{q'} = \|v\|_{q'} \,.
    \end{equation}
    
    We will be interested in maximizing $F_v$. We first note that if $v=1$, then $\eta=0$ is the unique maximizer. Indeed, for $v=1$ the H\"older inequality in \eqref{eq:orthodualproof} is saturated if and only if $[1]_{\Phi_\eta}$ is constant. According to \eqref{eq:phietajac}, this is equivalent to $\eta=0$.

    If more generally $\|v-1\|_{q'}<1$, we will argue that ${F}_v$ attains its maximum in $B^d$. Indeed, on the one hand, we have 
    $$
    \sup_{\eta \in B^d}{F}_v(\eta)\geq {F}_{v}(0)={F}_{1}(0)-( F_{1}(0)- F_{v}(0)) \geq 1-\|v-1\|_{q'}\,,
    $$ 
    and the lower bound on the right side is, by assumption, a positive number. On the other hand, 
    \begin{equation}\label{eq:orthodualproof2}
       {F}_{v}(\eta)\leq  \|[v]_{\Phi_{\eta}}\|_{1} \to 0
       \qquad\text{as}\ |\eta|\to 1 \,;
    \end{equation}
    see \cite[Lemma 7]{Frank2024B}, for instance. By continuity of the function $F_v$ and by compactness, $\sup_{B^d} F_v$ is attained in $B^d$. 

    \medskip

    \emph{Step 2.}    
    Consider now a sequence $(v_n)\subset L^{q'}(B^d)$ with $\| v_n \|_{q'}=1$ and $\inf_\Phi \| [v_n]_\Phi - 1\|_{q'}\to 0$. We consider $n$ so large that the latter infimum is smaller than $1$. We apply Step 1 with $v$ replaced by $\tilde v_n \coloneqq  [v_n]_{\Phi_n'}$, where $\Phi_n'$ is a M\"obius transformation such that $\|[v_n]_{\Phi_n'}-1\|_{q'}<1$ and such that $\|[v_n]_{\Phi_n'}-1\|_{q'}\to 0$ as $n\to\infty$. We obtain a sequence $(\eta_n)\subset B^d$ such that $\sup_{B^d} F_{\tilde v_n}$ is attained at $\eta_n$. Set $\Phi_n\coloneqq  \Phi_n'\circ\Phi_{\eta_n}$ and $r_n\coloneqq [v_n]_{\Phi_n}-1$.

    Let us show that $\eta_n\to 0$. On the one hand, we have, by optimality,
    $$
    F_1(\eta_n) \leq F_1(0) \,.
    $$
    On the other hand, we have, by H\"older's inequality, conformal invariance, and optimality,
    $$
    F_1(\eta_n) \geq F_{\tilde v_n}(\eta_n) - \| \tilde v_n - 1\|_{q'} \geq F_{\tilde v_n}(0) - \| \tilde v_n - 1\|_{q'} \geq F_1(0) - 2 \| \tilde v_n - 1\|_{q'} \,.
    $$
    It follows that $F_1(\eta_n)\to F_1(0)$ as $n\to\infty$. To prove that $\eta_n\to 0$, let $\eta_\infty$ denote any limit point of $(\eta_n)$. Thus, along an appropriate subsequence, we have $F_1(\eta_n)\to c$, where $c\coloneqq F_1(\eta_\infty)$ if $|\eta_\infty|<1$ and $c\coloneqq 0$ if $|\eta_\infty|=1$ by \eqref{eq:orthodualproof2}. Now $F_1(\eta_n)\to F_1(0)=1$ implies $c=1$, and therefore, since $\eta=0$ uniquely maximizes $F_1$, we have $\eta_\infty=0$, as claimed.

    Since $\eta_n\to 0$, we conclude that, as $n\to\infty$,
    $$
    \| r_n \|_{q'} = \| \tilde v_n - [1]_{\Phi_{\eta_n}^{-1}} \|_{q'} \leq \| \tilde v_n - 1 \|_{q'} + \| [1]_{\Phi_{\eta_n}^{-1}} -1 \|_{q'} \to 0 \,.
    $$ 
    
    Finally, we prove the orthogonality conditions. Since $\eta_n$ is the maximizer of ${F}_{\tilde v_n}$, we have
    $$
    \int_{B^d} (1+r_n)\,\mathrm d\nu \geq \int_{B^d} [1+r_n]_{\Phi_\eta} \,\mathrm d\nu
    \qquad\text{for all}\ \eta\in B^d \,.
    $$
    Thus, the gradient of $\eta\mapsto \int_{B^d} [1+r_n]_{\Phi_\eta} \,\mathrm d\nu$ vanishes at $\eta=0$. Using $\int_{B^d} [1+r_n]_{\Phi_\eta} \,\mathrm d\nu = \int_{B^d} [1]_{\Phi_\eta^{-1}}^{q'-1} (1+r_n) \,\mathrm d\nu$ and the explicit expression for $[1]_{\Phi_\eta^{-1}}$ in \eqref{eq:phietajac} (note $\Phi_\eta^{-1}=\Phi_{-\eta}$), we easily find that
    $$
    \nabla_\eta \Big\vert_{\eta=0} \int_{B^d} [1+r_n]_{\Phi_\eta} \,\mathrm d\nu = - (d-2) \int_{B^d} y \,r_n(y)\,\mathrm d\nu(y)  \,.
    $$
   This implies the claimed orthogonality conditions.
\end{proof}

\subsection{Local analysis}
In this subsection we combine the ingredients of the previous two subsections and finish the proof of the local bound, Proposition \ref{prop:localstabilitydual}. 
\begin{proof}[Proof of Proposition \ref{prop:localstabilitydual}] 
    We start with a sequence $(v_n) \subset L^{q'}(B^d)$ satisfying $\|v_n\|_{q'}=1$ for all $n$ and $\inf_\Phi\|[v_n]_\Phi-1\|_{q'}\to 0$ as $n\rightarrow \infty$. Proposition \ref{prop:dualorthogmin} gives us a sequence $(\Phi_n)$ of M\"obius transformations such that $\|[v_n]_{\Phi_n}-1\|_{q'}\to 0$ and such that $[v_n]_{\Phi_n}-1$ is $L^2(B^d)$-orthogonal to $\operatorname{span}\{y_1,\ldots,y_d\}$.

    Set $\alpha_n\coloneqq  \int_{B^d}[v_n]_{\Phi_n}\mathrm d\nu$ and note that, by H\"older's inequality,
    $$
    \left|\alpha_n-1\right| = \left| \int_{B^d}([v_n]_{\Phi_n}-1)\,\mathrm d\nu\right|\leq \|[v_n]_{\Phi_n}-1\|_{q'} \,.
    $$
    Since the right side tends to zero, we infer, in particular, that $\alpha_n\geq 0$ for all sufficiently large $n$ and, consequently, by the normalization $\|[v_n]_{\Phi_n}\|_{q'}=\|v_n\|_{q'}=1$,
    $$
    \left|\alpha_n -1\right| = \left||\alpha_n|-\|[v_n]_{\Phi_n}\|_{q'}\right| \leq \left\|[v_n]_{\Phi_n}-\alpha_n \right\|_{q'} \,.
    $$
    Thus, we have shown, for all sufficiently large $n$,
    \begin{equation*}
        \left|\alpha_n-1\right|\leq \min\left\{\|[v_n]_{\Phi_n}-1\|_{q'},\left\|[v_n]_{\Phi_n}-\alpha_n\right\|_{q'}\right\},
    \end{equation*}
    which, in turn, implies
    \begin{equation}
    \label{eq:orthodistapprox} 
        \frac 12\|[v_n]_{\Phi_n}-1\|_{q'}\leq \left\|[v_n]_{\Phi_n}-\alpha_n\right\|_{q'} \leq 2\|[v_n]_{\Phi_n}-1\|_{q'} \,.
    \end{equation}

    We set $\tilde r_n\coloneqq \alpha_n^{-1}[v_n]_{\Phi_n} - 1$. According to Proposition \ref{prop:dualorthogmin} and \eqref{eq:orthodistapprox}, we have $\|\tilde r_n\|_{q'}\to 0$ and $\tilde r_n$ is $L^2(B^d)$-orthogonal to $\operatorname{span}\{1,y_1,\ldots,y_d\}=Q\mathcal H^l$. Moreover, by conformal invariance, we have
    $$
    1 - \|Sv_n\|_{p'}^{q'} = \alpha_n^{q'} \left( \|1+ \tilde r_n\|_{q'}^{{q'}}-\|S(1+ \tilde r_n)\|_{p'}^{{q'}} \right),
    $$
    and our task is to find a lower bound on the right side.

    We use the elementary estimates from Lemma \ref{lem:elemineq2}. More specifically, for any $\kappa>0$ there are $C_\kappa,c_\kappa>0$ such that for all $r\in L^{q'}(B^d)$ we have, with $\zeta(r)$ from Lemma \ref{lem:elemineq2},
    \begin{align}
        \label{eq:Nbounddual}
        \|1+r\|_{q'}^{q'}&\geq 1+{q'}\int_{B^d}r\,\mathrm d \nu+q'\frac{1-\kappa}{2}\int_{B^d}\left(r^2+({q'}-2)\zeta(r)(|r+1|-1)^2\right)\mathrm d \nu \notag \\
        & \quad +c_\kappa\int_{B^d}\min \{|r|^{q'},r^2\}\,\mathrm d \nu
        \intertext{and}
        \|S(1+ r)\|_{p'}^{{p'}}&\leq 1+{p'}\int_{\Sph^{d-1}} Sr\,\mathrm d \mu+\left(\frac{{p'}({p'}-1)}{2}+\kappa\right)\int_{\Sph^{d-1}}\frac{(1+C_\kappa|Sr|)^{{p'}}}{1+(Sr)^2}(Sr)^2\,\mathrm d \mu \,. \notag
    \end{align}
    Moreover, using the elementary inequality $(1+a)^{{q'}/{p'}}\leq 1+a{q'}/{p'}$ for $a\geq-1$, it follows that
    \begin{align}\label{eq:FZbounddual}
         \|S(1+ r)\|_{p'}^{{q'}}&\leq 1+{q'}\int_{\Sph^{d-1}} Sr\,\mathrm d \mu+\left(\frac{{q'}({p'}-1)}{2}+\frac{{q'}\kappa}{{p'}}\right)\int_{\Sph^{d-1}}\frac{(1+C_\kappa|Sr|)^{{p'}}}{1+(Sr)^2}(Sr)^2\,\mathrm d \mu\,.
    \end{align}

    Applying \eqref{eq:Nbounddual} and \eqref{eq:FZbounddual} to $r=\tilde r_n$ and noting that $\int_{B^d} \tilde r_n\,\mathrm d\nu = \int_{\Sph^{d-1}} S\tilde r_n\,\mathrm d\mu$, we obtain
    \begin{align*}
        \|1 + \tilde r_n\|_{q'}^{q'} - \|S(1+\tilde r_n)\|_{p'}^{q'} & \geq \frac{q'}{2}(1-\kappa)\int_{B^d} \left( \tilde r_n^2 + ({q'}-2)\zeta(\tilde r_n)(|\tilde r_n+1|-1)^2\right) \mathrm d \nu \\
        & \quad - \left(\frac{{q'}({p'}-1)}{2}+\frac{{q'}\kappa}{{p'}}\right)\int_{\Sph^{d-1}}\frac{(1+C_\kappa|S\tilde r_n|)^{{p'}}}{1+(S\tilde r_n)^2}(S\tilde r_n)^2\,\mathrm d \mu \\
        &\quad +c_\kappa\int_{B^d}\min \{|\tilde r_n|^{q'},\tilde r_n^2\}\,\mathrm d \nu \,.
    \end{align*}
    The right side can be bounded from below via the non-linear spectral gap inequality, Proposition~\ref{prop:anospecV}. Given any $\lambda<2$, $\gamma_0>0$, and $C_\kappa>0$, we find that for all sufficiently large $n$ (recall that $\|\tilde r_n\|_{q'}\to 0$ and $\tilde r_n\in (Q\mathcal H^l)^\bot$), we have
    \begin{align*}
        & \|1+ \tilde r_n\|_{q'}^{{q'}}-\|S(1+ \tilde r_n)\|_{p'}^{{q'}} \\
        & \quad \geq  
         \frac{q'}2 \left( (1-\kappa) - \frac{p'-1}{q'-1} \frac{d}{d+2+\lambda} \left(1+\frac{2\kappa}{p'(p'-1)}\right)
         \right) \\
         & \quad\quad \times \int_{B^d} \left( \tilde r_n^2 + ({q'}-2)\zeta(\tilde r_n)(|\tilde r_n+1|-1)^2\right) \mathrm d \nu \\
        & \quad +\left(c_\kappa -\gamma_0 \frac{q'}{2}\frac{p'-1}{q'-1} \frac{d}{d+2+\lambda} \left(1+\frac{2\kappa}{p'(p'-1)} \right)\right)\int_{B^d}\min\{|\tilde r_n|^{q'},\tilde r_n^2\}\,\mathrm d\nu \,.
    \end{align*}
    Noting that $\frac{p'-1}{q'-1}= \frac{d+2}{d}$, we see that for any $\lambda\in(0,2)$ we can choose $\kappa=\kappa(d,\lambda)>0$ small enough such that 
    \begin{equation*}
        (1-\kappa) - \frac{p'-1}{q'-1} \frac{d}{d+2+\lambda} \left(1+\frac{2\kappa}{p'(p'-1)}\right) = (1-\kappa) - \frac{d+2}{d+2+\lambda} \left(1+\frac{2\kappa}{p'(p'-1)}\right) >0 \,.
    \end{equation*}
    Having fixed $\kappa$, and therefore also $c_\kappa$, we can choose $\gamma_0>0$ small enough such that
    \begin{equation*}
        c_\kappa -\gamma_0 \frac{q'}{2}\frac{p'-1}{q'-1} \frac{d}{d+2+\lambda} \left(1+\frac{2\kappa}{p'(p'-1)} \right)
        = c_\kappa -\gamma_0 \frac{d}{d+2+\lambda}  \left(1+\frac{2\kappa}{p'(p'-1)} \right) >0\,.
    \end{equation*}
    Since $\tilde r_n^2+(q'-2)\zeta(\tilde r_n)(|\tilde r_n+1|-1)^2\geq 0$, it follows that
    \begin{equation}
        \|1+ \tilde r_n\|_{q'}^{{q'}}-\|S(1+ \tilde r_n)\|_{p'}^{{q'}} \gtrsim  \int_{B^d}\min\{|\tilde r_n|^{q'},\tilde r_n^2\}\,\mathrm d\nu\,. \label{eq:stabbdd}
    \end{equation}
    Now, using $\|\tilde r_n\|_{q'}\leq 1$ for sufficiently large $n$, we have
   \begin{align}\notag
        \int_{B^d}\min\{|&\tilde r_n|^{q'},\tilde r_n^2\}\,\mathrm d\nu =\int_{\{|\tilde r_n|\leq 1\}}\tilde r_n^2\,\mathrm d\nu+\int_{\{|\tilde r_n|> 1\}}|\tilde r_n|^{q'}\,\mathrm d \nu\\
        &\geq \nu(B^d)^{1-\frac{2}{q'}}\left(\int_{\{|\tilde r_n|\leq 1\}}|\tilde r_n|^{q'}\,\mathrm d\nu\right)^\frac{2}{q'}+\int_{\{|\tilde r_n|> 1\}}|\tilde r_n|^{q'}\,\mathrm d\nu\gtrsim\left(\int_{B^d}|\tilde r_n|^{q'}\,\mathrm d\nu\right)^\frac{2}{q'}.
        \label{eq:mintoq'}
    \end{align} 

    To summarize, we have shown that
    $$
    1 - \|Sv_n\|_{p'}^{q'} = \alpha_n^{q'} \left( \|1+ \tilde r_n\|_{q'}^{{q'}}-\|S(1+ \tilde r_n)\|_{p'}^{{q'}} \right) \gtrsim \alpha_n^{q'} \|\tilde r_n\|_{q'}^2 \,.
    $$
    Since $\alpha_n\to 1$, it follows from \eqref{eq:orthodistapprox} that the right side is comparable to $\|[v_n]_{\Phi_n} - 1\|_{q'}^2$, which, in turn, is $\geq \inf_\Phi \|[v_n]_\Phi - 1\|_{q'}^2$. This concludes the proof of Proposition \ref{prop:localstabilitydual}.
\end{proof}
%%%%%%%%%%%%%%%%%%%%%%%%%%%%%%%%%%%
%%%%%%%%%%%%%%%%%%%%%%%%%%%%%%%%%%%
\section{Optimality of the results}
\label{sec:4}

Our goal in this section is to show that the stability exponents in our main theorems are best possible.

The result in Proposition \ref{prop:localstability}, and thus in Theorems \ref{thm:HWYstabilitysphere} and \ref{thm:HWYstability}, is optimal in the sense that the exponents $2$ of the $L^2$-distance and $p$ of the $L^{p}$-distance cannot be decreased. To be more specific, for given $r\in \{p,2\}$, we can find a sequence $(u_j)\subset L^{p}(\mathbb S^{d-1})$ with $\|u_j\|_{p}=1$ such that
\begin{equation}\label{eq:opt}
    \liminf_{j\to\infty}\inf_{\Psi}\|(u_j)_\Psi-1\|_{p}=0\qquad \text{and} \qquad \limsup_{j\to\infty}\frac{\|u_j\|^{p}_{p}-\|Qu_j\|^{p}_{q}}{\inf_{\Psi}\|(u_j)_\Psi-1\|_{r}^r}<\infty\,.
\end{equation}
This will be shown in Subsections \ref{sec:optimal1} (for $r=2$) and \ref{sec:optimal2} (for $r=p$).

Turning to the dual HWY-inequality \eqref{eq:HWYspheredual}, the result in Proposition \ref{prop:localstabilitydual}, and thus in Theorems \ref{thm:HWYstabilitydualsphere} and \ref{thm:HWYstabilitydual}, is optimal in the sense that the exponent $2$ of the $L^{q'}$-distance cannot be decreased. To be more specific, we can find a sequence $(v_j)\subset L^{{q'}}(B^d)$ with $\|v_j\|_{q'}=1$ such that \begin{equation}\label{eq:optdual}
    \liminf_{j\to\infty}\inf_{\Phi}\|[v_j]_\Phi-1\|_{q'}=0\qquad \text{and} \qquad \limsup_{j\to\infty}\frac{\|v_j\|^{q'}_{q'}-\|Sv_j\|^{q'}_{p'}}{\inf_{\Phi}\|[v_j]_\Phi-1\|_{q'}^{2}}<\infty\,.
\end{equation}
This will be shown in Subsection \ref{sec:optimal3}.

%%%%%%%%%%%%%%%%%%%%%%%%%%%%%%%%%%%%%%%%%%%%%%%%

\subsection{Optimality of the quadratic power for the $L^2(\mathbb S^{d-1})$-distance}\label{sec:optimal1}

 To prove \eqref{eq:opt} in case $r=2$, we fix a function $0\not\equiv \phi\in C^\infty(\mathbb S^{d-1})\subseteq L^p(\mathbb S^{d-1})$ with $\int_{\mathbb S^{d-1}}\phi\,\mathrm d\mu=\int_{\mathbb S^{d-1}}\phi\omega_i\,\mathrm d\mu(\omega)=0$, $i=1,\ldots,d$, and pick $(u_j)$ to be a subsequence of
$$u_\varepsilon\coloneqq \lambda_\varepsilon(1+\varepsilon\phi)$$ with $\varepsilon\to0$ and $\lambda_\varepsilon>0$ chosen such that $\|u_\varepsilon\|_{p}=1$. Note that $\lambda_\varepsilon=1+ o(1)$. As $2<p<q$, we can expand the HWY-deficit to second order, which gives
$$\|u_\varepsilon\|^{p}_{p}-\|Qu_\varepsilon\|^{p}_{q}=\varepsilon^2 \frac{p(p-1)}{2}\left(\|\phi\|_2^2-\frac{q-1}{p-1}\|Q\phi\|_{2}^2\right)+o(\varepsilon^2)\,.$$
This is clearly bounded from above by a constant times $\varepsilon^2\|\phi\|_2^2$. Thus, we can conclude \eqref{eq:opt} in case $r=2$, once
\begin{equation*}
    \inf_{\Psi}\|(u_\varepsilon)_\Psi-1\|^2_{2}=\varepsilon^2\|\phi\|_2^2+o(\varepsilon^2)
\end{equation*}
is verified. The latter follows \textit{mutatis mutandis} from the proof of \cite[Eq.~(5.2)]{Frank2024B}. Indeed, the $L^2$- and $L^{p}$-norms replace the $W^{1,2}$- and $W^{1,4}$-norms, and instead of the conformal bounds for $W^{1,4}$, we can directly use conformal invariance of the $L^{p}$-norm.

%%%%%%%%%%%%%%%%%%%%%%%%%%%%%%%%%%%%%%%%%%%

\subsection{Optimality of the power $p$ for the $L^p(\mathbb S^{d-1})$-distance}\label{sec:optimal2}
To prove \eqref{eq:opt} in case $r=p$, we consider $(u_j)$ as a subsequence of the two-parameter family of functions $$u_{\delta,\eta}\coloneqq \lambda_{\delta,\eta}(1+\delta(1)_{\Psi_\eta})$$ with $\eta\to \eta_*\in \mathbb S^{d-1}$ and $\delta\to0$. Here $\lambda_{\delta,\eta}$ is chosen such that $\|u_{\delta,\eta}\|_{p}=1$, and $\Psi_\eta$ denotes the M\"obius transformation associated with $\eta\in B^d$ from \eqref{eq:psieta}.

As $\eta\to \eta_*\in \mathbb S^{d-1}$, $(1)_{\Psi_\eta}(\omega)\to 0$ and $Q(1)_{\Psi_\eta}(\omega)=(1)^{p/q}_{\Psi_\eta}(\omega)\to 0$ for all $\omega\in \mathbb S^{d-1}\setminus\{\eta_*\}$. Applying the Brezis--Lieb lemma and conformal invariance, we obtain the expansions
\begin{align}\label{eq:expa1}
    \|1+\delta(1)_{\Psi_\eta}\|^{p}_{p}&=\|1\|_{p}^p\left(1+\delta^{p}\right)+o_{|\eta|\to 1}(1)\,,\\
    \|Q(1+\delta(1)_{\Psi_\eta})\|^{q}_{q}&=\|Q1\|^{q}_{q}\left(1+\delta^{q}\right)+o_{|\eta|\to 1}(1)\,,\notag
\end{align}
uniformly in $\delta\in (0,1)$. As a consequence of \eqref{eq:expa1}, we have
\begin{equation*}\lambda_{\eta,\delta}=\left(1+\delta^{p}\right)^{-\frac{1}p}+o_{|\eta|\to 1}(1)\end{equation*}
for fixed $\delta$. We now choose $\eta$ as a function of $\delta$ (but for simplicity we do not reflect this in the notation). Choosing $\eta\to\eta_*$ sufficiently faster than $\delta\to 0$, we can ensure that the error in $o_{|\eta|\to1}(1)$ is controlled by $\delta^{p+\epsilon}$ for some fixed $\epsilon>0$. This implies 
\begin{equation}
\label{eq:lambda}\lambda_{\eta,\delta}=1-\frac1p \delta^p (1+o(1)) 
\end{equation} and 
\begin{align*}
    \|u_{\delta,\eta}\|^{p}_{p}- \|Qu_{\delta,\eta}\|^{p}_{q}&=|\lambda_{\eta,\delta}|^p\left(1+\delta^{p}-\left(1+\delta^{q}\right)^{\frac{d-1}{d}}+o_{|\eta|\to 1}(1)\right)\\&=\delta^{p}(1+o(1))\,.
\end{align*} 

Therefore, if we can prove
\begin{equation}\inf_\Psi\|(u_{\delta,\eta})_\Psi-1\|^{p}_{p}=\delta^{p}\|1\|^{p}_{p}+o(\delta^{p})\,,\label{eq:expdist}  
\end{equation}
we can conclude \eqref{eq:opt} for $r=p$. Note that we can dismiss the prefactor $\lambda_{\delta,\eta}$ in the definition of $u_{\delta,\eta}$ due to \eqref{eq:lambda}. If we take the identity as competitor in the infimum and use conformal invariance of the $L^{p}$-norm, we obtain the upper bound
$$
\inf_\Psi\|(1+\delta(1)_{\Psi_\eta})_\Psi-1\|_{p}\leq \delta \|1\|_{p}\,.
$$  
Let $\eta'\in B^d$ (depending on $\delta$ and $\eta$) be such that
$$
\|(1+\delta(1)_{\Psi_\eta})_{\Psi_{\eta'}}-1\|^{p}_{p}
\leq \inf_\Psi\|(1+\delta(1)_{\Psi_\eta})_{\Psi}-1\|^{p}_{p} + o(\delta^p) \,.
$$
(Here we use the same argument as below \eqref{eq:infimummobius} to reduce the infimum over $\Psi$ to an infimum over elements of $B^d$.) Thus, by the triangle inequality, we obtain the lower bound 
\begin{equation*}
\inf_\Psi\|(1+\delta(1)_{\Psi_\eta})_{\Psi}-1\|_{p}\geq \|(1)_{\Psi_{\eta'}}-1\|_{p}-\|\delta (1)_{\Psi_{\eta}\circ\Psi_{\eta'}}\|_{p}+o(\delta^{p})\,.
\end{equation*}
Again as a consequence of conformal invariance, by comparison with the upper bound we deduce 
\begin{equation}\label{eq:deltabound}
   \|(1)_{\Psi_{\eta'}}-1\|^{p}_{p}\lesssim \delta^{p}\,.
\end{equation}

This implies $\eta'\to 0$. (Indeed, otherwise it would have a limit point in $\overline{B^d}\setminus\{0\}$, and the Brezis--Lieb lemma would imply that along the corresponding subsequence the left side of \eqref{eq:deltabound} remained positive, contradicting the bound.) Since $\eta'\to 0$, we can use the uniform expansion 
\begin{equation}
    \label{eq:pointwisediff}
    (1)_{\Psi_{\eta'}}-1=\operatorname{const}\eta'(1+o(1))\,.
\end{equation}

Together with \eqref{eq:deltabound}, this gives $|\eta'|\lesssim \delta$. 
Meanwhile, an application of Young's elementary inequality tells us that
\begin{equation}
   \|(1+\delta(1)_{\Psi_\eta})_{\Psi_{\eta'}}-1\|^{p}_{p}\geq \|\delta (1)_{\Psi_{\eta}\circ \Psi_{\eta'}}\|^{p}_{p}-p\int_{\mathbb S^{d-1}}|(1)_{\Psi_{\eta'}}-1|^{p-1}\delta |(1)_{\Psi_{\eta}\circ\Psi_{\eta'}}|\,\mathrm d\mu\,.
\label{eq:Young1}\end{equation}

We claim that
\begin{equation}
    \label{eq:counterexampleproof}
    \|(1)_{\Psi_{\eta}\circ\Psi_{\eta'}}\|_1\to 0\,.
\end{equation}
Indeed, by Corollary \ref{cor:mobiusclass}, there are $\eta''\in B^d$ and $A\in O(d)$ such that $\Psi_{\eta}\circ\Psi_{\eta'}=A\circ\Psi_{\eta''}$. If $\eta''$ stayed away from the boundary of $B^d$, it would have a subsequence that converges to a limit in $B^d$. By compactness of $O(d)$ and the fact that $\eta'$ stays away from the boundary of $B^d$, a subsequence of $\Psi_\eta= A\circ\Psi_{\eta''}\circ\Psi_{\eta'}^{-1}$ would converge to a M\"obius transformation of $\Sph^{d-1}$, contradicting $|\eta|\to 1$. This shows that $|\eta''|\to 1$. Since $1<p$ it follows as in \cite[Lemma~7]{Frank2024B} that $\|(1)_{\Psi_{\eta}\circ\Psi_{\eta'}}\|_1 = \|(1)_{\Psi_{\eta''}}\|_1\to 0$, as claimed in \eqref{eq:counterexampleproof}.

Combining the pointwise bound \eqref{eq:pointwisediff}, the estimate $|\eta'|\lesssim\delta$, and \eqref{eq:counterexampleproof} yields
$$
\int_{\mathbb S^{d-1}}|(1)_{\Psi_{\eta'}}-1|^{p-1}\delta |(1)_{\Psi_{\eta''}}|\,\mathrm d\mu\lesssim \delta^{p}\|(1)_{\Psi_{\eta''}}\|_1= o(\delta^{p})\,.
$$
In conclusion, \eqref{eq:Young1} yields
$$
\|(1+\delta(1)_{\Psi_\eta})_{\Psi_{\eta'}}-1\|^{p}_{p}\geq \delta^p \|1\|_p^p + o(\delta^p) \,.
$$
This implies the lower bound in \eqref{eq:expdist} and thereby completes the proof.

%%%%%%%%%%%%%%%%%%%%%%%%%%%%%%%%%%%%%

\subsection{Optimality of the quadratic power for the $L^{q'}(B^d)$-distance}
\label{sec:optimal3}

To prove \eqref{eq:optdual}, we consider $(v_j)$ as a subsequence of the two-parameter family of functions $$v_{\delta,\eta}\coloneqq \lambda_{\delta,\eta}(1+\delta[1]_{\Phi_\eta})$$ with $\eta\to \eta_*\in \mathbb S^{d-1}$, $\delta\to0$, and $\lambda_{\delta,\eta}$ chosen such that $\|v_{\delta,\eta}\|_{q'}=1$. As in the previous subsection, by applying the Brezis--Lieb Lemma and conformal invariance, we expand
\begin{equation*}
    \|v_{\delta,\eta}\|^{{q'}}_{q'}- \|Sv_{\delta,\eta}\|^{q'}_{p'}= 1+\delta^{{q'}}-\left(1+\delta^{{p'}}\right)^{\frac{q'}{p'}}+o_{|\eta|\to 1}(1)
    \,
\end{equation*}
and 
$$\lambda_{\eta,\delta}=\left(1+\delta^{q'}\right)^{-\frac{1}{q'}}+o_{|\eta|\to 1}(1)\,$$
uniformly in $\delta\in (0,1)$. 
If we choose $\eta\to\eta_*$ sufficiently faster than $\delta\to 0$, we obtain
$$ \|v_{\delta,\eta}\|^{{q'}}_{q'}- \|Sv_{\delta,\eta}\|^{{q'}}_{p'}=\delta^{q'}(1+o_{\delta\to 0}(1))$$
and $$\lambda_{\eta,\delta}=1-\frac 1{q'}\delta^{q'}(1+o_{\delta\to 0}(1))\,.$$
Once again, we are left to determine the asymptotic behavior of the distance. It suffices to prove
\begin{equation*}\inf_\Phi\|[v_{\delta,\eta}]_\Phi-1\|^{q'}_{q'}=\delta^{q'}\|1\|^{q'}_{q'}+o_{\delta\to0}(\delta^{q'})
\end{equation*}
to conclude \eqref{eq:optdual}. 
This follows identically as the proof of \eqref{eq:expdist} with exponent $q'$ instead of $p$; we omit the details.

%%%%%%%%%%%%%%%%%%%%%%%%%%%%%%%%%%%%%%%%%

\appendix

%%%%%%%%%%%%%%%%%%%%%%%%%%%%%%%%%%%%%%%%%

\section{Conformal invariance}\label{sec:confinv}

Our goal in this appendix is to prove the invariance under M\"obius transformations of the HWY-inequality and its dual. For background on M\"obius transformations, we refer to \cite{Ratcliffebook}.
\subsubsection*{The inequality on the upper half-space}

By definition, a M\"obius transformation of $\R^d\cup\{\infty\}$ is a finite composition of reflections in generalized spheres. (A generalized sphere is either a sphere or an affine hyperplane.) Also by definition, a M\"obius transformation of $\R^d_+$ is a M\"obius transformation of $\R^d\cup\{\infty\}$ that leaves $\R^d_+$ invariant.

It is easy to see \cite[Theorem 4.4.1]{Ratcliffebook} that by restriction to the boundary $\partial\R^d_+$, identified with $\R^{d-1}$, a M\"obius transformation of $\R^d_+$ gives rise to a M\"obius transformation of $\R^{d-1}\cup\{\infty\}$. Conversely, any M\"obius transformation of $\R^{d-1}\cup\{\infty\}$ has a unique extension to a M\"obius transformation of $\R^d\cup\{\infty\}$, the so-called Poincar\'e extension \cite[Section 4.4]{Ratcliffebook}. It is defined as follows: If $\psi$ is a reflection in a plane $\{ x' \in\R^{d-1} :\ a'\cdot x' = t\}$  (respectively in a sphere $\{ x'\in \R^{d-1}:\ |x'-a'| = r\}$), then $\tilde\psi$ is defined to be the reflection in the plane $\{ x\in\R^d :\ a'\cdot x' = t\}$ (respectively in the sphere $\{ x\in\R^d :\ | x- (a',0)| = r\}$), where $x=(x',x_d)$. Clearly, in both cases $\tilde\psi$ is an extension of $\psi$ in the sense that $\tilde\psi(x',0) = (\psi(x'),0)$ for $x'\in\R^{d-1}$. Moreover, in both cases $\tilde\psi$ leaves $\R^d_+$ invariant. For a general M\"obius transformation $\psi$ of $\R^{d-1}\cup\{\infty\}$, the Poincar\'e extension $\tilde\psi$ is defined by writing $\psi$ as a composition of reflections in generalized spheres and extending each reflection. The fact that this definition is independent of the chosen representation as composition follows from the fact that if $\tilde\psi_1$ and $\tilde\psi_2$ are two such extensions of $\psi$, then $\tilde\psi_1 \circ \tilde\psi_2^{-1}$ is a M\"obius transformation of $\R^d\cup\{\infty\}$ that fixes each point of $\R^{d-1}\times\{0\}$ and is therefore the identity by a well-known theorem as given in \cite[Theorem 4.3.6]{Ratcliffebook}, for instance.

The Poincar\'e extension is relevant in our context because of the following formula.

\begin{lemma}\label{lem:poincare}
	For any $f\in L^\frac{2(d-1)}{d-2}(\R^{d-1})$ and any M\"obius transformation $\psi$ of $\R^{d-1}\cup\{\infty\}$,
	$$
	P\left( J_\psi^\frac{d-2}{2(d-1)} \, f\circ\psi\right) = J_{\tilde\psi}^\frac{d-2}{2d}\, (Pf)\circ\tilde\psi \,.
	$$	
\end{lemma}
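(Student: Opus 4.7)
The plan is to derive an explicit transformation rule for the Poisson kernel $P(x,\xi)$ under the pair $(\tilde\psi,\psi)$, and then conclude by a change of variables $\eta=\psi(\xi)$ in the integral representation of $(Pf)\circ\tilde\psi$. The analysis relies on three standard conformal identities for M\"obius transformations of $\R^d_+$, each of which is preserved under composition by the chain rule for Jacobians, and can therefore be verified on the two families of generators of $\R^d_+$, namely reflections in vertical hyperplanes and inversions in spheres centered on the boundary $\partial\R^d_+$.

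The three identities are: (i) the M\"obius cross-ratio identity
$$|\tilde\psi(x)-\tilde\psi(y)|^2 = J_{\tilde\psi}(x)^{1/d}\, J_{\tilde\psi}(y)^{1/d}\, |x-y|^2\,,$$
valid for all $x,y\in\R^d\cup\{\infty\}$; (ii) the fact that $\tilde\psi$ is an isometry of the hyperbolic metric $|\mathrm dx|^2/x_d^2$ on $\R^d_+$, which translates to
$$\tilde\psi(x)_d = x_d\, J_{\tilde\psi}(x)^{1/d}\,,$$
and (iii) the compatibility of the bulk and boundary conformal factors,
$$J_{\tilde\psi}(\xi,0)^{1/d} = J_\psi(\xi)^{1/(d-1)}\,.$$
All three are easily checked for a vertical-plane reflection (where $J=1$) and for an inversion in a sphere $\{|x-(\xi_0,0)|=r\}$ centered on the boundary (a direct computation).

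Applying (i) with $y=(\xi,0)$, so that $\tilde\psi(y)=(\psi(\xi),0)$, and combining with (ii) and (iii), the Poisson kernel satisfies the transformation rule
$$P(\tilde\psi(x),\psi(\xi)) = J_{\tilde\psi}(x)^{\frac{2-d}{2d}}\, J_\psi(\xi)^{-\frac{d}{2(d-1)}}\, P(x,\xi)\,.$$
Substituting $\eta=\psi(\xi)$, $\mathrm d\eta = J_\psi(\xi)\,\mathrm d\xi$, in the defining integral $(Pf)(\tilde\psi(x))=\int_{\R^{d-1}} P(\tilde\psi(x),\eta)\,f(\eta)\,\mathrm d\eta$ and applying this rule produces a factor $J_\psi(\xi)^{1-\frac{d}{2(d-1)}} = J_\psi(\xi)^{\frac{d-2}{2(d-1)}}$ inside the integral and pulls out $J_{\tilde\psi}(x)^{\frac{2-d}{2d}}$; rearranging yields the claimed identity.

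The main obstacle is really just the bookkeeping of exponents together with the verification of (i)--(iii) on the generators. Identity (ii) for a boundary-centered inversion is the only mildly non-trivial check; once that is done, everything else is a change of variables.
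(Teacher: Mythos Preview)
Your proof is correct. Both your argument and the paper's rely on the generator-plus-composition structure of the M\"obius group, but you organize the verification differently. The paper checks the full identity $P(J_\psi^{\frac{d-2}{2(d-1)}} f\circ\psi) = J_{\tilde\psi}^{\frac{d-2}{2d}} (Pf)\circ\tilde\psi$ directly on translations, dilations, and the single inversion $\xi\mapsto\xi/|\xi|^2$, carrying out the Poisson integral explicitly in the inversion case. You instead isolate the geometric content into the three auxiliary identities (i)--(iii), verify \emph{those} on generators, and then derive a pointwise transformation law for the Poisson kernel that handles all M\"obius maps at once via a single change of variables. Your route is more conceptual---it makes clear that the identity is a consequence of the cross-ratio law, the hyperbolic isometry property, and the bulk/boundary Jacobian compatibility---and indeed identities (i) and (iii) resurface later in the paper's appendix as \eqref{eq:mobiusnorm1} and \eqref{eq:poincarejac}. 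The paper's route is more self-contained and avoids invoking the hyperbolic metric, at the cost of a slightly ad hoc integral computation for the inversion.
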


\begin{proof}
	The Poincar\'e extension of $\psi(\xi)=\xi-\xi_0$ (with $\xi_0\in\R^{d-1}$) is $\tilde\psi(x)=(x'-\xi_0,x_d)$, and the Poincar\'e extension of $\psi(\xi)=b\xi$ (with $b\in\R_+$) is $\tilde\psi(x)=bx$. For both transformations the assertion of the proposition is clear. Momentarily, we will show the assertion for $\psi(\xi) = \xi/|\xi|^2$, for which we have $\tilde\psi(x)=x/|x|^2$. The assertion for general $\psi$ then follows from the fact that the Poincar\'e extension of a composition is the composition of the Poincar\'e extensions, which is a consequence of the uniqueness of the Poincar\'e extension.

    It remains to prove the formula for the inversion; see also \cite[Proof of Theorem 4.1]{Hang2008}. For given $f\in L^\frac{2(d-1)}{d-2}(\R^{d-1})$ let $\tilde f(\xi)\coloneqq  |\xi|^{-(d-2)} f(\xi/|\xi|^2)$. We compute 
    \begin{align*}
        (P\tilde f)(x) & = 
         \frac{2}{|\Sph^{d-1}|} \int_{\R^{d-1}} \frac{x_d}{(|x'-|\tilde\xi|^{-2} \tilde\xi|^2 + x_d^2)^{d/2}} \frac{1}{|\tilde\xi|^{d}} f(\tilde\xi)\,\mathrm d\tilde\xi \\
        & = |x|^{- d+2} \frac{2}{|\Sph^{d-1}|} \int_{\R^{d-1}} \frac{|x|^{-2} x_d}{(||x|^{-2} x'- \tilde\xi|^2 + (|x|^{-2} x_d)^2)^{d/2}} f(\tilde\xi)\,\mathrm d\tilde\xi = |x|^{-d+2} \, (Pf)(x/|x|^2) \,.
    \end{align*} 
    This is the claimed formula.   
\end{proof}

\subsubsection*{Passing from the half-space to the ball}

Let $\mathcal S:\R^{d-1}\cup\{\infty\}\to\Sph^{d-1}$ be the inverse stereographic projection, given by
$$
\mathcal S(\xi) \coloneqq  \left( \frac{2\xi}{1+|\xi|^2}, \frac{1-|\xi|^2}{1+|\xi|^2} \right),
\qquad \xi\in\R^{d-1} \,,
$$
and recall that $J_\mathcal S(\xi) = (2/(1+|\xi|^2))^{d-1}$ is its Jacobian. Moreover, let $\Sigma:\R^d_+\to B^d$ be given by
$$
\Sigma(x) \coloneqq  \left( \frac{2x'}{(1+x_d)^2 + |x'|^2}, \frac{1-|x|^2}{(1+x_d)^2+|x'|^2} \right),
\qquad x\in\R^d_+ \,.
$$
Note that $\Sigma$, or rather its extension to the closure of $\R^d_+$, 
coincides with $\mathcal S$ on $\partial\R^d_+$, identified with $\R^{d-1}$.

Then a similar computation as in the proof of Lemma \ref{lem:poincare} shows that
$$
Q\left(J_{\mathcal S^{-1}}^\frac{d-2}{2(d-1)} \, f\circ \mathcal S^{-1}\right) = J_{\Sigma^{-1}}^\frac{d-2}{2d} \, (Pf) \circ\Sigma^{-1} \,.
$$
Thus, if a function $f$ on $\R^{d-1}$ and a function $u$ on $\Sph^{d-1}$ are related by
$$
    u = |\Sph^{d-1}|^\frac{d-2}{2(d-1)} \, J_{\mathcal S^{-1}}^\frac{d-2}{2(d-1)} \, f\circ \mathcal S^{-1} \,,
$$
then, recalling that we use probability measures on $\Sph^{d-1}$ and $B^d$,
\begin{equation}
    \label{eq:confinv}
    \| u \|_{L^\frac{2(d-1)}{d-2}(\mathbb S^{d-1})} = \|f\|_{L^\frac{2(d-1)}{d-2}(\R^{d-1})}
    \text{ and}\quad
    \|Qu \|_{L^\frac{2d}{d-2}(B^d)} = d^\frac{d-2}{2d} |\Sph^{d-1}|^{\frac{d-2}{2d(d-1)}} \|Pf\|_{L^\frac{2d}{d-2}(\R^d_+)} \,.
\end{equation}

\subsubsection*{The inequality on the ball}

By definition, a M\"obius transformation of $\Sph^{d-1}$ is a map $\Psi:\Sph^{d-1}\to\Sph^{d-1}$ such that $\mathcal S^{-1} \circ\Psi\circ\mathcal S$ is a M\"obius transformation of $\R^{d-1}\cup\{\infty\}$. Also by definition, a M\"obius transformation of $B^d$ is a M\"obius transformation of $\R^d\cup\{\infty\}$ that leaves $B^d$ invariant.

The Poincar\'e extension of a M\"obius transformation $\Psi$ of $\Sph^{d-1}$ is, by definition, the M\"obius transformation $\tilde\Psi\coloneqq \Sigma\circ\tilde\psi\circ\Sigma^{-1}$ of $B^d$ where $\psi \coloneqq \mathcal S^{-1}\circ\Psi\circ\mathcal S$. Just like for the half-space, this defines a bijection between M\"obius transformations on $\Sph^{d-1}$ and on $B^d$. 

In view of these definitions, we can state Lemma \ref{lem:poincare} equivalently as follows.

\begin{lemma}
    \label{lem:poincareball}
	For any $u\in L^\frac{2(d-1)}{d-2}(\Sph^{d-1})$ and any M\"obius transformation $\Psi$ of $\Sph^{d-1}$,
	$$
	Q( J_\Psi^\frac{d-2}{2(d-1)} \, u\circ\Psi) = J_{\tilde\Psi}^\frac{d-2}{2d}\, (Qu)\circ\tilde\Psi \,.
	$$	
\end{lemma}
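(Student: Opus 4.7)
The plan is to deduce this spherical statement from its half-space counterpart, Lemma~\ref{lem:poincare}, by conjugating with the stereographic projection. Given $u\in L^{\frac{2(d-1)}{d-2}}(\Sph^{d-1})$ and a M\"obius transformation $\Psi$ of $\Sph^{d-1}$, I would introduce the corresponding half-space data
$$
f \coloneqq |\Sph^{d-1}|^{-\frac{d-2}{2(d-1)}}\, J_{\mathcal S}^{\frac{d-2}{2(d-1)}}\, u\circ\mathcal S\,,\qquad \psi \coloneqq \mathcal S^{-1}\circ\Psi\circ\mathcal S\,,
$$
so that $f\in L^{\frac{2(d-1)}{d-2}}(\R^{d-1})$ and $\psi$ is a M\"obius transformation of $\R^{d-1}\cup\{\infty\}$. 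By the definition of the Poincar\'e extension recalled just before the lemma, we have $\tilde\psi = \Sigma^{-1}\circ\tilde\Psi\circ\Sigma$.

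Next I would apply Lemma~\ref{lem:poincare} to $f$ and $\psi$, giving
$$
P\!\left(J_\psi^{\frac{d-2}{2(d-1)}}\, f\circ\psi\right) = J_{\tilde\psi}^{\frac{d-2}{2d}}\, (Pf)\circ\tilde\psi\,.
$$
Both sides are then transported back to the ball via \eqref{eq:confinv}: the left side, being $Pg$ for $g\coloneqq J_\psi^{\frac{d-2}{2(d-1)}} f\circ\psi$, corresponds to $Q\hat g$ where $\hat g$ is the spherical counterpart of $g$, and similarly $Pf$ on the right corresponds to $Qu$. A short chain-rule computation, using $J_{\mathcal S^{-1}\circ\Psi} = (J_{\mathcal S^{-1}}\circ\Psi)\,J_\Psi$ on $\Sph^{d-1}$ and the analogous identity $J_{\Sigma^{-1}\circ\tilde\Psi} = (J_{\Sigma^{-1}}\circ\tilde\Psi)\,J_{\tilde\Psi}$ on $B^d$, identifies $\hat g = J_\Psi^{\frac{d-2}{2(d-1)}}\, u\circ\Psi$ and turns the half-space identity into the claimed ball identity.

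The main obstacle is purely bookkeeping: one must verify that the normalization constants $|\Sph^{d-1}|^{\pm\frac{d-2}{2(d-1)}}$ cancel correctly and that the Jacobian factors combine as
$$
J_\psi^{\frac{d-2}{2(d-1)}}\, J_{\mathcal S}^{\frac{d-2}{2(d-1)}}\circ\psi = J_{\mathcal S\circ\psi}^{\frac{d-2}{2(d-1)}} = J_{\Psi\circ\mathcal S}^{\frac{d-2}{2(d-1)}} = (J_\Psi\circ\mathcal S)^{\frac{d-2}{2(d-1)}}\, J_{\mathcal S}^{\frac{d-2}{2(d-1)}}
$$
on the boundary, and analogously for $\tilde\psi$, $\Sigma$, $\tilde\Psi$ in the interior with exponent $\frac{d-2}{2d}$. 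Once these two chain-rule identities are in place, the statement follows immediately by composing with $\Sigma^{-1}$ and using the uniqueness of the Poincar\'e extension to identify $\Sigma\circ\tilde\psi\circ\Sigma^{-1}$ with $\tilde\Psi$. No new analytic ingredient beyond Lemma~\ref{lem:poincare} and the conformal intertwining \eqref{eq:confinv} is required.
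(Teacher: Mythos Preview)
Your proposal is correct and follows exactly the approach the paper intends: the paper gives no separate proof of this lemma, merely stating that ``in view of these definitions, we can state Lemma~\ref{lem:poincare} equivalently as follows,'' and your argument spells out precisely the chain-rule bookkeeping behind that equivalence. One minor remark: the pointwise intertwining you need is not \eqref{eq:confinv} (which is a norm identity) but the displayed relation $Q(J_{\mathcal S^{-1}}^{\frac{d-2}{2(d-1)}} f\circ\mathcal S^{-1}) = J_{\Sigma^{-1}}^{\frac{d-2}{2d}} (Pf)\circ\Sigma^{-1}$ stated just before the lemma, though this does not affect the substance of your argument.
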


We note that by taking $u=1$, so that $Qu=1$, we find the identity
\begin{equation*}
    Q J_\Psi^\frac{d-2}{2(d-1)} = J_{\tilde\Psi}^\frac{d-2}{2d} \,.
\end{equation*}

\subsubsection*{M\"obius transformations of $B^d$}

Sometimes it is convenient to have an explicit form of M\"obius parametrizations. This is well known to experts, but we include the details for the sake of completeness. We let, for $\eta\in B^d$,
\begin{equation}
    \label{eq:phieta}
    \Phi_\eta(y)\coloneqq  \frac{(1-|\eta|^2)(y-\eta) - |y-\eta|^2\eta}{1-2\eta\cdot y + |\eta|^2|y|^2}     \qquad\text{for all}\  y\in B^d \,.
\end{equation}
As usual, $O(d)$ denotes the set of orthogonal transformations of $\R^d$.

\begin{lemma}
	M\"obius transformations of $B^d$ are precisely those maps $\Phi$ given by
	$$
	\Phi(y) = A \Phi_\eta(y)\,,
	\qquad y\in B^d \,,
	$$
	with $A\in O(d)$ and $\eta\in B^d$.
\end{lemma}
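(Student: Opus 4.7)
The plan is to establish the two inclusions separately. For the forward direction, orthogonal transformations are manifestly Möbius transformations preserving $B^d$, so it suffices to verify this for each $\Phi_\eta$. A direct algebraic computation on the formula \eqref{eq:phieta} gives the key identity
$$
1-|\Phi_\eta(y)|^2 = \frac{(1-|\eta|^2)(1-|y|^2)}{1-2\eta\cdot y + |\eta|^2|y|^2}\,,
$$
whose denominator is strictly positive on $\overline{B^d}$ (by Cauchy--Schwarz, it dominates $(1-|\eta||y|)^2$). This shows that $\Phi_\eta$ sends $B^d$ into $B^d$ and $\partial B^d$ into $\partial B^d$; combined with the relation $\Phi_\eta^{-1}=\Phi_{-\eta}$ already noted in the proof of Proposition \ref{prop:dualorthogmin}, it maps $B^d$ bijectively onto itself. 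To see that $\Phi_\eta$ is actually a Möbius transformation, one exhibits it explicitly as a composition of two simple building blocks: for $\eta\neq 0$, inversion in the sphere centered at $\eta/|\eta|^2$ with radius $\sqrt{|\eta|^{-2}-1}$ (a sphere orthogonal to $\partial B^d$), composed with a suitable orthogonal transformation; see \cite[Chapter 4]{Ratcliffebook}.

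For the converse direction, let $\Phi$ be any Möbius transformation of $B^d$ and set $\eta \coloneqq \Phi^{-1}(0) \in B^d$. Since $\Phi_\eta(\eta)=0$, the composition $\Psi \coloneqq \Phi \circ \Phi_\eta^{-1}$ is a Möbius transformation of $B^d$ fixing $0$. The heart of the argument is to show that any such $\Psi$ lies in $O(d)$. Let $\sigma(y)=y/|y|^2$ denote the inversion in $\partial B^d$. Since $\Psi$ preserves $\partial B^d$, the conjugate $\Psi \sigma \Psi^{-1}$ is a Möbius transformation of $\R^d\cup\{\infty\}$ fixing every point of $\partial B^d$. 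The standard rigidity fact that a Möbius transformation fixing a generalized sphere pointwise is either the identity or the reflection in that sphere (the spherical analogue of \cite[Theorem 4.3.6]{Ratcliffebook}) forces $\Psi\sigma\Psi^{-1}=\sigma$, so $\Psi$ commutes with $\sigma$. Evaluating at $0$ gives $\Psi(\infty)=\sigma(\Psi(0))=\sigma(0)=\infty$. Hence $\Psi$ fixes both $0$ and $\infty$, and a Möbius transformation fixing $\infty$ is necessarily a similarity $y\mapsto cAy+b$ with $c>0$, $A\in O(d)$, $b\in\R^d$. The conditions $\Psi(0)=0$ and $\Psi(\partial B^d)=\partial B^d$ then force $b=0$ and $c=1$, so $\Psi=A\in O(d)$ and $\Phi=A\circ\Phi_\eta$, as claimed.

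The main obstacle is not conceptual but computational: the direct verification that the rational formula \eqref{eq:phieta} defines a Möbius transformation of $B^d$ sending $\eta$ to $0$. Once this is in hand, everything else reduces to the well-known rigidity properties of the Möbius group recalled above.
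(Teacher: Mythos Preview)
Your argument is correct and follows essentially the same route as the paper. For the converse direction both proofs set $\eta=\Phi^{-1}(0)$, reduce to showing that a M\"obius transformation of $B^d$ fixing $0$ must be orthogonal, and establish this by proving it also fixes $\infty$ and hence is a similarity preserving $\Sph^{d-1}$; the paper reaches $\Psi(\infty)=\infty$ by directly conjugating with the inversion $J(y)=y/|y|^2$, while you obtain it via the commutation $\Psi\sigma\Psi^{-1}=\sigma$, which is just a rephrasing of the same idea.

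The only noteworthy difference is in the forward direction: to see that $\Phi_\eta$ is a M\"obius transformation, the paper rewrites it as $\Phi_\eta(y)=-\eta+(1-|\eta|^2)(y^*-\eta)^*$ with $y^*=y/|y|^2$, which exhibits it transparently as a composition of inversions, translations, and a dilation; you instead invoke the description as inversion in a sphere orthogonal to $\partial B^d$ followed by an orthogonal map. Both are valid, but the paper's explicit formula is self-contained and avoids an additional reference.
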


\begin{proof}
	Denoting $y^*\coloneqq y/|y|^2$, we find
	\begin{equation}
	    \label{eq:phieta2}
        	\Phi_\eta(y) = -\eta + (1-|\eta|^2) (y^*-\eta)^* \,.
	\end{equation}
    Thus, $\Phi_\eta$ is a M\"obius transformation of $\R^d\cup\{\infty\}$. A tedious but elementary computation shows that $|\Phi_\eta(y)|<1$ when $|y|<1$, so $\Phi_\eta$ leaves $B^d$ invariant. Hence, $\Phi_\eta$ is a M\"obius transformation of $B^d$, and this remains true after composing it with an orthogonal transformation.
		
	Conversely, assume that $\Phi$ is a M\"obius transformation of $B^d$, and set $\eta= \Phi^{-1}(0)$. Then $\Phi_\eta \circ\Phi^{-1}$ is a M\"obius transformation of $B^d$ that fixes the point $0$ and, therefore, by \cite[Theorem 4.4.8]{Ratcliffebook}, it is an orthogonal transformation. It is instructive to review the proof in some more detail: With $J(y) \coloneqq  y^* = y/|y|^2$, we consider $\phi \coloneqq  J\circ \Phi_\eta \circ\Phi^{-1}\circ J$. This is a M\"obius transformation of $\R^d\cup\{\infty\}$ that fixes $\infty$. According to \cite[Theorem 4.3.2]{Ratcliffebook}, $\phi$ is a Euclidean similarity, that is, there are $A\in O(d)$, $k>0$, and $z\in\R^d$ such that $\phi(x) = kAx + z$. Each one of the four maps making up $\phi$ leaves $\Sph^{d-1}$ invariant. Meanwhile, $x\mapsto kAx+z$ maps $\Sph^{d-1}$ to the sphere of radius $k$ centered at $z$. It follows that $k=1$ and $z=0$. Thus,
	$$
	\Phi_\eta\circ\Phi^{-1}(y) = J(\phi(Jy)) = J(A(Jy)) = \frac{AJy}{|AJy|^2} = Ay \,,
	\qquad y\in B^d \,,
	$$
	as claimed.
\end{proof}

A tedious but straightforward computation shows that the Jacobian of $\Phi_\eta$ is given by
\begin{equation}
    \label{eq:phietajac}
    (J_{\Phi_\eta}(y))^\frac{1}{d} = \frac{1-|\eta|^2}{1- 2\eta\cdot y + |\eta|^2|y|^2}
    \qquad\text{for all}\ y\in B^d \,.
\end{equation}
(The expression \eqref{eq:phieta2} is useful when performing this computation.)

\subsubsection*{M\"obius transformations of $\Sph^{d-1}$}
For $\eta\in B^d$, let
\begin{equation}
    \label{eq:psieta}
    \Psi_\eta(\omega)\coloneqq  \frac{(1-|\eta|^2)(\omega-\eta) - (1-2\eta\cdot\omega+|\eta|^2) \eta}{1-2\eta\cdot\omega + |\eta|^2} \qquad\text{for all}\ \omega\in \mathbb S^{d-1}\,.
\end{equation}

\begin{corollary}\label{cor:mobiusclass}
	M\"obius transformations of $\Sph^{d-1}$ are precisely those maps $\Psi$ given by
	$$
	\Psi(\omega) = A \Psi_\eta(\omega)\,,
	\qquad \omega\in\Sph^{d-1} \,,
	$$
	with $A\in O(d)$ and $\eta\in B^d$.
\end{corollary}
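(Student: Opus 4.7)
The plan is to deduce this corollary directly from the classification of M\"obius transformations of $B^d$ proved in the previous lemma, using the Poincar\'e extension as the link between M\"obius transformations of $\Sph^{d-1}$ and of $B^d$. Concretely, given a M\"obius transformation $\Psi$ of $\Sph^{d-1}$, I would consider its Poincar\'e extension $\tilde\Psi$, which by construction is a M\"obius transformation of $B^d$ whose boundary trace on $\Sph^{d-1}$ recovers $\Psi$ (this is where I use that $\Sigma$ extends $\mathcal S$ to the closures). Applying the previous lemma, there exist $A\in O(d)$ and $\eta\in B^d$ such that $\tilde\Psi = A\Phi_\eta$, and taking boundary values on $\Sph^{d-1}$ yields $\Psi = A\left.\Phi_\eta\right|_{\Sph^{d-1}}$.

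The one calculation left is to verify that $\left.\Phi_\eta\right|_{\Sph^{d-1}} = \Psi_\eta$. Starting from the formula
\begin{equation*}
    \Phi_\eta(y) = \frac{(1-|\eta|^2)(y-\eta) - |y-\eta|^2 \eta}{1-2\eta\cdot y + |\eta|^2 |y|^2}\,,
\end{equation*}
when $|y|=1$ one has the elementary identities $|y|^2 = 1$ and $|y-\eta|^2 = 1 - 2\eta\cdot y + |\eta|^2$, and substituting these into both numerator and denominator immediately produces the expression \eqref{eq:psieta} defining $\Psi_\eta$. Thus the forward direction is complete.

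For the converse, I need to show that every map of the form $A\Psi_\eta$ with $A\in O(d)$ and $\eta\in B^d$ is a M\"obius transformation of $\Sph^{d-1}$. By the preceding identification of boundary values, $A\Psi_\eta$ is the restriction to $\Sph^{d-1}$ of $A\Phi_\eta$, which is a M\"obius transformation of $B^d$ by the previous lemma. Pulling back through $\Sigma$, we obtain a M\"obius transformation of $\R^d_+$, whose boundary restriction is a M\"obius transformation of $\R^{d-1}\cup\{\infty\}$ (by \cite[Theorem 4.4.1]{Ratcliffebook}, as already invoked in the excerpt); conjugating back by $\mathcal S$ then realizes $A\Psi_\eta$ as a M\"obius transformation of $\Sph^{d-1}$ in the sense of the definition given in the text.

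I do not expect any serious obstacle here: the entire argument is bookkeeping around the Poincar\'e-extension bijection and the classification of M\"obius transformations of $B^d$ already proved. The only point that requires (brief) attention is the elementary identity simplifying $\Phi_\eta$ on the unit sphere to $\Psi_\eta$, and the observation that the Poincar\'e extension of a map is uniquely determined by, and uniquely determines, its boundary values, so that the two classifications (for $B^d$ and for $\Sph^{d-1}$) are equivalent via restriction.
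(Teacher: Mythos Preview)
Your proposal is correct and follows essentially the same approach as the paper's proof, which simply records that $\Phi_\eta$ is the Poincar\'e extension of $\Psi_\eta$ and invokes the bijection between M\"obius transformations of $\Sph^{d-1}$ and $B^d$ provided by the Poincar\'e extension. You have spelled out the details the paper leaves implicit, in particular the elementary verification that $\Phi_\eta|_{\Sph^{d-1}}=\Psi_\eta$ and the unwinding of both directions of the correspondence.
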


\begin{proof}
    This follows from the fact that $\Phi_\eta$ is the Poincar\'e extension of $\Psi_\eta$ and that the Poincar\'e extension provides a bijection between M\"obius transformations of $\Sph^{d-1}$ and $B^d$.
\end{proof}

The Jacobian of $\Psi_\eta$ is given by
\begin{equation*}
    (J_{\Psi_\eta}(\omega))^\frac{1}{d-1}=\frac{1-|\eta|^2}{1-2\eta\cdot \omega+|\eta|^2}
    \qquad\text{for all}\ \omega\in\Sph^{d-1} \,.
\end{equation*}
Indeed, this follows, for example, from \eqref{eq:phietajac} together with the formula
\begin{equation}
    \label{eq:poincarejac}
    J_\Psi(\omega)^\frac{1}{d-1} = J_{\tilde\Psi}(\omega)^\frac1d\,,
    \qquad \omega\in\Sph^{d-1}\,,
\end{equation}
relating the Jacobian of a M\"obius transformation $\Psi$ of $\Sph^{d-1}$ and its Poincar\'e extension. 

To prove the latter, we recall that for any M\"obius transformation $\psi$ of $\R^d\cup\{\infty\}$ we have
\begin{equation}
    \label{eq:mobiusnorm1}
    |\psi(x) - \psi(x')|^2 = J_\psi(x)^\frac1d \, |x-x'|^2 \, J_\psi(x')^\frac1d \,,
    \qquad x,x'\in\R^d \,.
\end{equation}
Applying this formula in $\R^{d-1}$ and using a similar formula for the inverse stereographic projection, we find for any M\"obius transformation $\Psi$ of $\Sph^{d-1}$ that
\begin{equation}
    \label{eq:mobiusnorm2}
    |\Psi(\omega) - \Psi(\omega')|^2 = J_\Psi(\omega)^\frac1{d-1} \, |\omega-\omega'|^2 \, J_\Psi(\omega')^\frac1{d-1} \,,
    \qquad \omega,\omega'\in\Sph^{d-1} \,.
\end{equation}
Taking $\psi=\tilde\Psi$, $x=\omega$, and $x'=\omega'$ in \eqref{eq:mobiusnorm1} and comparing with \eqref{eq:mobiusnorm2}, we arrive at \eqref{eq:poincarejac}.

%%%%%%%%%%%%%%%%%%%%%%%%%%%%%%%%%%%%%%%

\section{A unified notion of distance} 

In this appendix we propose a way of measuring the distance to the set of optimizers that captures a quadratic behavior close-by and a $p$-homogeneous behavior far away.

For $1\leq r<\infty$ and functions $f,f^*\in L^r(X)$ on a measure space $(X, \mathcal A, \sigma)$, we set
$$
\Pi_{r}(f,f^*) \coloneqq \frac{1}{\|f^*\|_{L^r(X)}^r}\left(\| f-f^*\|^r_{L^{r}(\{|f-f^*|> |f^*|\})}+ \left\| (f-f^*)|f^*|^{r/2-1}\right\|^2_{L^{2}(\{|f-f^*|\leq |f^*|\})}\right).
$$
Clearly, we have
$$
    \|f^*\|_{L^r(X)}^r \, \Pi_{r}(f,f^*) \simeq 
\begin{cases}
\int_X \max\{|f^*|^{r/2-1}(f-f^*)^2,|f-f^*|^r\}\, \mathrm d\sigma & \text{if}\ r\geq 2 \,,\\
\int_X \min\{|f^*|^{r/2-1}(f-f^*)^2,|f-f^*|^r\}\, \mathrm d\sigma &
\text{if}\ r \leq 2 \,,
\end{cases}
$$
where $\simeq$ mean $\lesssim$ and $\gtrsim$ with constants depending only on $r$. When $r\geq 2$, it is easy to see that
$$
\|f^*\|_{L^r(X)}^r \, \Pi_{r}(f,f^*) \simeq \| f-f^*\|^r_{L^{r}(X)}+ \| f-f^*\|^2_{L^{2}(X,\,|f^*|^{r-2} d\sigma)} \,,
$$
while for $r<2$ the argument in \eqref{eq:mintoq'} shows that
$$
\|f^*\|_{L^r(X)}^r \, \Pi_{r}(f,f^*) \gtrsim \|f-f^*\|^2_{L^{r}(X)}\|f^*\|_{L^r(X)}^{r-2}
\qquad\text{if}\ \|f-f^*\|_{L^{r}(X)} \leq \|f^*\|_{L^r(X)} \,.
$$

Here is a variant of Theorems \ref{thm:HWYstabilitysphere} and \ref{thm:HWYstabilitydualsphere} in terms of this quantity.

\begin{corollary}\label{cor:HWY}
    Let $d\geq 3$. There is a $c_d>0$ such that for all $0\neq u\in L^p(\mathbb S^{d-1})$ and $0\neq v\in L^{q'}(B^d)$ we have
\begin{align*}
    1 -\frac{\|Qu\|_q}{ \|u\|_p} \geq c_d\inf_{u^*}\Pi_p(u,u^*)
\qquad\text{and}\qquad
    1 -\frac{\|Sv\|_{p'}}{ \|v\|_{q'}} \geq c_d \inf_{v^*}\Pi_{q'}(v,v^*)\,,
    \end{align*}
    where the infima are taken over all optimizers of 
    \eqref{eq:HWYsphere} 
     and \eqref{eq:HWYspheredual}, respectively.
\end{corollary}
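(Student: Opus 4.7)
The plan is to deduce the corollary from Theorems \ref{thm:HWYstabilitysphere} and \ref{thm:HWYstabilitydualsphere} (respectively, from the intermediate bound \eqref{eq:stabbdd} established in the proof of Proposition \ref{prop:localstabilitydual} in the dual case) by (i) identifying $\inf_{u^*}\Pi_p(u,u^*)$ and $\inf_{v^*}\Pi_{q'}(v,v^*)$ with the two-term stability distances appearing in those theorems via the conformal change of variables, and (ii) passing from the $p$-th (respectively $q'$-th) power of the deficit to the first power using the elementary bound $1-x^a\leq a(1-x)$ valid for $x\in[0,1]$ and $a\geq 1$.

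For the primal inequality I would first recall that optimizers of \eqref{eq:HWYsphere} take the form $u^*=c\,(1)_{\Psi^{-1}}$ with $c\in\R\setminus\{0\}$ and $\Psi$ a M\"obius transformation of $\Sph^{d-1}$. Since $p>2$ for $d\geq 3$,
$$
\|u^*\|_p^p\,\Pi_p(u,u^*)\simeq \|u-u^*\|_p^p + \int_{\Sph^{d-1}}(u-u^*)^2|u^*|^{p-2}\,\mathrm d\mu.
$$
Setting $\lambda=1/c$ and performing the change of variables $\omega\mapsto\Psi(\omega)$, a short computation tracking the Jacobian factors (using the identity $((1)_{\Psi^{-1}})_\Psi=1$ and the cancellation $(p-2)(d-2)/(2(d-1))=1/(d-1)$) shows that the right side equals $|c|^p\bigl(\|\lambda(u)_\Psi-1\|_p^p+\|\lambda(u)_\Psi-1\|_2^2\bigr)$. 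Since $\|u^*\|_p^p=|c|^p$, we obtain $\inf_{u^*}\Pi_p(u,u^*)\simeq \inf_{\lambda,\Psi}\bigl(\|\lambda(u)_\Psi-1\|_p^p+\|\lambda(u)_\Psi-1\|_2^2\bigr)$, which is precisely the right side of Theorem \ref{thm:HWYstabilitysphere}. Combining this with $1-\|Qu\|_q/\|u\|_p\geq \tfrac{1}{p}(1-\|Qu\|_q^p/\|u\|_p^p)$ yields the primal bound.

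For the dual inequality, optimizers are $v^*=c\,[1]_{\Phi^{-1}}$. Since $q'<2$, the two pieces in the definition of $\Pi_{q'}$ merge to give
$$
\|v^*\|_{q'}^{q'}\,\Pi_{q'}(v,v^*)=\int_{B^d}\min\bigl\{|v-v^*|^{q'},\,|v^*|^{q'-2}(v-v^*)^2\bigr\}\,\mathrm d\nu,
$$
and the analogous conformal change of variables (using $[1]_{\Phi^{-1}}\circ\Phi=J_\Phi^{-(d+2)/(2d)}$) identifies this, after dividing by $\|v^*\|_{q'}^{q'}=|c|^{q'}$, with $\int_{B^d}\min\{|\lambda[v]_\Phi-1|^{q'},(\lambda[v]_\Phi-1)^2\}\,\mathrm d\nu$ (with $\lambda=1/c$). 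This integral is strictly larger than $\|\lambda[v]_\Phi-1\|_{q'}^2$ when the latter is $<1$, so Theorem \ref{thm:HWYstabilitydualsphere} alone is insufficient. However, the proof of Proposition \ref{prop:localstabilitydual} actually establishes the stronger local bound \eqref{eq:stabbdd} before reducing it to $\|\tilde r_n\|_{q'}^2$ via \eqref{eq:mintoq'}. Running exactly the contradiction argument that deduced Theorem \ref{thm:HWYstabilitydualsphere} from Propositions \ref{prop:qualdual} and \ref{prop:localstabilitydual}, but stopping at \eqref{eq:stabbdd}, yields $1-\|Sv\|_{p'}^{q'}/\|v\|_{q'}^{q'}\geq c_d\inf_{v^*}\Pi_{q'}(v,v^*)$, and the elementary inequality $1-x^{q'}\leq q'(1-x)$ (valid since $q'\in(1,2)$ for $d\geq 3$) then delivers the dual bound.

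The only real obstacle is the bookkeeping in the two conformal identifications of $\Pi_r$: one must verify that the dichotomy $\{|\cdot-\cdot^*|>|\cdot^*|\}$ versus $\{|\cdot-\cdot^*|\leq|\cdot^*|\}$ is preserved under the conformal change of variables (it is, since $|u-u^*|$ and $|u^*|$ transform by the same Jacobian factor $J_\Psi^{-(d-2)/(2(d-1))}$, and analogously for $v,v^*$ on the ball) and that the various powers of $J_\Psi$ and $J_\Phi$ combine to a total power of $1$ in each resulting integral (which is guaranteed by conformal invariance of the $L^p$ and $L^{q'}$ norms). Beyond this routine computation, the two previous theorems and the proof of Proposition \ref{prop:localstabilitydual} do all the heavy lifting.
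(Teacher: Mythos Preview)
Your proposal is correct and essentially mirrors the paper's own proof: the primal case is deduced directly from Theorem~\ref{thm:HWYstabilitysphere} via the conformal identification of $\Pi_p$ with the two-term distance, and the dual case re-runs the contradiction argument of Theorem~\ref{thm:HWYstabilitydualsphere} while stopping at the sharper local bound \eqref{eq:stabbdd} rather than proceeding to \eqref{eq:mintoq'}. The only cosmetic difference is that the paper phrases the dual contradiction directly in terms of $1-\|Sv_n\|_{p'}/\|v_n\|_{q'}$, whereas you pass through the $q'$-th power first and invoke the elementary inequality $1-x^{q'}\leq q'(1-x)$ at the end; either order works.
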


A similar stability result holds for the HWY-inequality for the upper half-space and its dual, for the $\sigma_2$-curvature inequality on the sphere \cite{Frank2024B}, and for the $p$-Sobolev inequality on Euclidean space \cite{Figalli2022}.

\begin{proof}[Proof of Corollary \ref{cor:HWY}]
	While the proof for the HWY-inequality is direct (just up to repositioning $\lambda$), for the dual version we have to make more changes to the proof as it uses a new notion of distance. By contradiction, assume that there is a sequence $(v_n)\subset L^{q'}(B^d)$ with
	\begin{equation}
		\label{eq:thmproofass3}
		\frac{ 1-\|Sv_n\|_{p'}/\| v_n \|_{q'}}{\inf_{\lambda,\Phi}   \int_{B^d} \min\{|v_n-\lambda\,[1]_{\Phi}|^{q'},|v_n-\lambda\,[1]_{\Phi}|^{2}\}\,\mathrm d\nu } \to 0
	\end{equation} as $n\to\infty$. We may normalize the sequence $(v_n)$ by $\| v_n \|_{q'} = 1$.    Since
	\begin{equation*}
	    \inf_{\lambda,\Phi}  \int_{B^d} \min\{|v_n-\lambda\,[1]_{\Phi}|^{q'},|v_n-\lambda\,[1]_{\Phi}|^{2}\}\,\mathrm d\nu \leq \|v_n \|_{q'}^{q'}=1\,,
	\end{equation*} 
	we deduce from \eqref{eq:thmproofass3} that $ \|Sv_n\|_{p'}\to 1$ as $n\to\infty$. Hence, by Proposition \ref{prop:qualdual}, $\inf_{\Phi,\lambda\in\{\pm 1\}} \| v_n - \lambda\,[1]_\Phi\|_{q'}\to 0$ as $n\to\infty$. An application of Proposition \ref{prop:localstabilitydual} in the form given by \eqref{eq:stabbdd} then gives a positive lower bound that contradicts \eqref{eq:thmproofass3}.
\end{proof}

%%%%%%%%%%%%%%%%%%%%%%%%%%%%%%%%
%%%%%%%%%%%%%%%%%%%%%%%%%%%%%%%%

\subsection*{Acknowledgements}
    Partial support through the US National Science Foundation grant DMS-1954995 (R.L.F.) and the German Research Foundation grants EXC-2111-390814868 (R.L.F.), FR 2664/3-1 (R.L.F. \& J.W.P.), and TRR 352-Project-ID 470903074 (R.L.F., J.W.P., \& L.R.) is acknowledged.
 %    \bibliography{references}
	% \bibliographystyle{amsalpha}%abbrv

    \newcommand{\etalchar}[1]{$^{#1}$}
\providecommand{\bysame}{\leavevmode\hbox to3em{\hrulefill}\thinspace}

\end{document}